\newtheorem{thm}{Theorem}[section]
\newtheorem{lem}[thm]{Lemma}
\newtheorem{cor}[thm]{Corollary}
\newtheorem{defi}[thm]{Definition}
\newtheorem{prop}[thm]{Proposition}
\newtheorem{rem}[thm]{Remark}
\newtheorem{hyp}[thm]{Assumption}
\newcommand{\ben}{\vspace{0mm}\begin{equation}}
\newcommand{\een}{\vspace{0mm}\end{equation}}
\newcommand{\be}{\vspace{0mm}\begin{equation*}}
\newcommand{\ee}{\vspace{0mm}\end{equation*}}
\newcommand{\bea}{\vspace{0mm}\begin{equation*}\begin{aligned}}
\newcommand{\eea}{\vspace{0mm}\end{aligned}\end{equation*}}
\newcommand{\bean}{\vspace{0mm}\begin{equation}\begin{aligned}}
\newcommand{\eean}{\vspace{0mm}\end{aligned}\end{equation}}
\numberwithin{equation}{section}
\numberwithin{figure}{section}
\newcommand{\eps}{\varepsilon}
\newcommand{\Bfl}{\Big\lfloor}
\newcommand{\Bfr}{\Big\rfloor}
\newcommand{\Co}{\mathcal{C}}
\DeclareMathOperator{\Var}{Var}
\newcommand{\supp}{\textnormal{supp}}
\def\D{\mathbb{D}}
\def\N{\mathbb{N}}
\def\P{\mathbb{P}}
\def\R{\mathbb{R}}
\def\C{\mathbb{C}}
\def\E{\mathbb{E}}
\def\ind{{\mathchoice {\rm 1\mskip-4mu l} {\rm 1\mskip-4mu l}
{\rm 1\mskip-4.5mu l} {\rm 1\mskip-5mu l}}}
\title{Renewal in Hawkes processes with self-excitation and inhibition}
\author{Manon Costa\thanks{Institut de Mathématiques de Toulouse, UMR 5219; Université de Toulouse, CNRS, UPS IMT, F-31062 Toulouse Cedex 9, France; E-mail: \texttt{manon.costa@math.univ-toulouse.fr}},
\quad Carl Graham\thanks{CMAP, CNRS, \'Ecole polytechnique, Institut Polytechnique de Paris,
91128 Palaiseau, France. \texttt{carl.graham@polytechnique.edu }},
\quad Laurence Marsalle\thanks{Univ. Lille, CNRS, UMR 8524 - Laboratoire Paul Painlev\'e, F-59000 Lille, France; E-mail: \texttt{laurence.marsalle@math.univ-lille1.fr}},
\quad Viet Chi Tran\thanks{Univ. Lille, CNRS, UMR 8524 - Laboratoire Paul Painlev\'e, F-59000 Lille, France; E-mail: \texttt{chi.tran@math.univ-lille1.fr}}
}
\date{\today}
\begin{document}

\maketitle

\begin{abstract}
This paper investigates
Hawkes processes on the positive real line exhibiting both self-excitation and inhibition. Each
point of this point process  impacts its future intensity by the addition of a signed reproduction function.
The case of a nonnegative reproduction function corresponds to self-excitation,
and has been widely investigated in the literature.
In particular, there exists a cluster representation of the Hawkes process which
allows to apply results known for Galton-Watson trees.
In the present paper, using renewal techniques, we establish limit theorems for Hawkes processes that have reproduction functions which are signed and have bounded support.
We notably prove exponential concentration inequalities, and thus extend results of
Reynaud-Bouret and Roy~\cite{reynaudbouretroy} which were proved
for nonnegative reproduction functions using this cluster representation which is no longer valid
in our case.
An important step for this is to establish the existence of exponential moments for renewal times of M/G/$\infty$ queues that appear naturally in our problem. These results have their own interest, independently of the original problem for the Hawkes processes.

\end{abstract}

\bigskip\noindent
\emph{MSC 2010 subject classification}: 60G55, 60F99, 60K05, 60K25, 44A10.

\bigskip\noindent
\emph{Key-words}: Point processes, self-excitation, inhibition, renewal theory, ergodic limit theorems,
concentration inequalities, Galton-Watson trees, M/G/$\infty$ queues.

\bigskip\noindent
\emph{Acknowledgments}:
The authors thank Patricia Reynaud-Bouret for introducing them to Hawkes processes and for proposing the problem of inhibition in Hawkes processes.
All the authors have been supported by the Chair ``Mod\'elisation Math\'ematique et Biodiversit\'e'' of
Veolia Environnement-Ecole Polytechnique-Museum National d'Histoire Naturelle-Fondation X.
L.M. and V.C.T. acknowledge support from Labex CEMPI (ANR-11-LABX-0007-01).

\section{Introduction and main results}\label{sec:intro}

Hawkes processes have been introduced by Hawkes \cite{hawkes}, and are now widely used in many applications, for example: modeling of earthquake occurrences \cite{hawkesadamopoulos,ogata88},
finance~\cite{bacrydelattrehoffmannmuzySPA, bacrydelattrehoffmannmuzy,bacrymuzy2016},
genetics \cite{reynaudbouretschbath}, neuroscience \cite{chevalliercaceresdoumicreynaudbouret,ditlevsenlocherbach,reynaudbouretrivoirardtuleaumalot}.
Hawkes processes are random point processes on the real line (see~\cite{daleyverejones2003,daleyverejones2008,jacod} for an introduction)
where each atom is associated with a (possibly signed) reproduction measure generating further atoms or adding repulsion.

When the reproduction measure is nonnegative, Hawkes and Oakes \cite{hawkesoakes} have provided a cluster representation of the Hawkes processes
based on immigration of ancestors, each of which is at the head of the branching point process of its offspring.
Exponential concentration inequalities for ergodic theorems and tools for statistical applications have been developed,
\emph{e.g.}, by Reynaud-Bouret and Roy~\cite{reynaudbouretroy}
using a coupling \emph{à la} Berbee \cite{berbee}.

For many applications however, it is important to allow the reproduction measure to be a signed measure.
The positive part of the measure can be interpreted as self-excitation, and its negative part as self-inhibition.
For instance, in neuroscience this can be used to model the existence of a latency period
before the successive activations of a  neuron, see \emph{e.g.} \cite{reynaudbouretrivoirardtuleaumalot}.
Br\'emaud and Massouli\'e~\cite{bremaudmassoulie} have devised efficient
techniques based on Poisson point process thinning  (or embedding) for this framework.
Recent works~\cite{Chen2017,Raad2019} provide interesting contributions
in this perspective, which will be further discussed at the end of Section \ref{sec:intro}.

A large part of the literature on Hawkes processes for neuroscience uses large systems approximations by mean-field
 limits (\emph{e.g.}, \cite{chevallier,delattrefournierhoffmann,delattrefournier,ditlevsenlocherbach})
 or stabilization properties (\emph{e.g.},  \cite{duartelocherbachost} using Erlang kernels). Here, we consider
 a single Hawkes process for which  the reproduction measure is a signed measure and concentrate on extending
 the ergodic theorem and concentration inequalities obtained in \cite{reynaudbouretroy} for a
 nonnegative reproduction measure. Similarly to \cite{reynaudbouretroy},
the reproduction measure is assumed to have bounded support.

A main issue here is that when inhibition is present then the cluster representation of \cite{hawkesoakes}
is no longer valid.
An important tool in our study is a coupling construction of the Hawkes process with signed reproduction measure
and of a Hawkes process with a positive measure.
The former is shown to be a thinning of the latter, for which the cluster representation is valid.

We then define renewal times for these general Hawkes processes.
For this purpose, we introduce an auxiliary strong Markov process with states given by
point processes.
This allows to split the sample paths into
the delay and the cycles, the latter being
i.i.d.\ distributed excursions for which we use limit theorems for i.i.d.\ sequences.

In order to obtain concentration inequalities, a main difficulty is to obtain exponential bounds for the tail distribution of the renewal times. In the case in which the reproduction function is nonnegative, we associate to the Hawkes process
an $M/G/\infty$ queue. To our knowledge, it is the first time that the connection with $M/G/\infty$ queues is made. This allows us to control the length of the excursions of the Hawkes process by using powerful Laplace transform techniques from the queuing theory. These results have independent interest in themselves. We then extend our techniques to Hawkes processes
with signed reproduction functions using the coupling.

We shall explain in Remark~\ref{rmk-phi-coupling} how the coupling method presented in this paper
in a simple framework can be extended to a much broader framework.

\subsection{Definitions and notations}
\label{sec:defs}

\subsubsection*{Measure-theoretic and topological framework}

Throughout this paper, an appropriate filtered probability space $(\Omega,\mathcal{F},(\mathcal{F}_t)_{t\ge0},\P)$ satisfying the usual assumptions is given. All processes will be assumed to be adapted.

Let $\mathcal{N}(\R)$ denote the space of counting measures on the real line $\R=(-\infty, +\infty)$
which are boundedly finite; these are the
 Borel measures with values in $\N_0\cup \{+\infty\}$ (where $\N_0=\{0,1,\ldots\}$) which are finite on any bounded  set. The space $\mathcal{N}(\R)$ is endowed with the weak topology
$\sigma(\mathcal{N}(\R),\Co_{bs}(\R))$ and the corresponding Borel $\sigma$-field, where $\Co_{bs}$ denotes the space of continuous functions with bounded support.

If $N$ is in $\mathcal{N}(\R)$ and $I\subset \R$ is an interval then $N|_I$ denotes the restriction of $N$ to $I$.
Then $N|_I$ belongs to the space $\mathcal{N}(I)$ of boundedly finite counting measures on $I$.
By abuse of notation, a point process on $I$ is often identified with its extension which is null
outside of $I$,
and in particular $N|_I \in \mathcal{N}(I)$ with $\ind_I N \in \mathcal{N}(\R)$.
Accordingly, $\mathcal{N}(I)$ is endowed with the trace topology and $\sigma$-field.

A random point process on $I\subset \R$ will be considered as a random variable taking values in the Polish space $\mathcal{N}(I)$.
We shall also consider random processes with sample paths in the Skorohod space
$\D(\R_+,\mathcal{N}(I))$.

All these spaces are
Polish, see~\cite[Prop.~A2.5.III, Prop.~A2.6.III]{daleyverejones2003}.

\subsubsection*{Hawkes processes}

In this paper we study a random point process on the real line $\R=(-\infty,+\infty)$
specified by a stochastic evolution on the half-line $(0,+\infty)$ and
an initial condition given by a point process on the complementary half-line $(-\infty,0]$.
This is  more general than considering a stationary version of the point process
as was done in early papers~\cite{hawkes,hawkesoakes},
does not require its existence, and can be used to prove the latter as in~\cite{bremaudmassoulie}.
The time origin $0$ can be interpreted as the start of some sort of action with regards to the process
(\emph{e.g.}, observation, or computation of statistical estimators).


In the following definition of a Hawkes process with a signed reproduction measure,
the initial condition $N^0$ is always assumed to be $\mathcal{F}_0$-measurable
and $N^h|_{(0,+\infty)}$ to be adapted to $(\mathcal{F}_t)_{t\ge0}$. We refer to \cite[Sect.~7.2]{daleyverejones2003} for the definition of the conditional intensity measure and denote $x^+=\max(x,0)$ for $x\in\R$.

\begin{defi}
\label{def:Hawkes}
Let $\lambda>0$, a signed measurable function $h: (0,+\infty) \to \R$,
and a boundedly finite point process $N^0$ on $(-\infty,0]$
with law $\mathfrak{m}$ be given.
The point process $N^h$ on $\R$ is a Hawkes process on $(0,+\infty)$ with initial condition $N^0$
and reproduction measure $\mu(dt) \triangleq h(t)\,dt$
if $N^h|_{(-\infty,0]}=N^0$ and the conditional intensity measure of $N^h|_{(0,+\infty)}$
w.r.t.\ $(\mathcal{F}_t)_{t\ge0}$
is absolutely continuous w.r.t.\ the Lebesgue measure and has density
\begin{equation}
\Lambda^h: t\in (0,+\infty) \mapsto
\Lambda^h(t)= \biggl(\lambda+\int_{(-\infty,t)} h(t-u)\,N^h(du)\biggr)^+\,.
\label{def:Lambda}
\end{equation}
\end{defi}
This is a special case of the nonlinear Hawkes process defined in~\cite{bremaudmassoulie},
corresponding  to choosing $x\mapsto (\lambda + x)^+$ as the
function $\phi:\R\to\R_+$ in a conditional intensity of the more general form
$\Lambda^{h,\phi}(t)= \phi\bigl(\int_{(-\infty,t)} h(t-u)\,N^h(du)\bigr)$.
We made this choice in order to streamline the mathematical reasoning and keep formulas reasonably readable.
We shall later detail in Remark~\ref{rmk-phi-coupling} how to extend the results to the more general setting.

Hawkes processes can be defined for reproduction measures $\mu$ which are not absolutely continuous
w.r.t.\ the Lebesgue measure, but we shall consider here this case only. This avoids in particular
the issue of multiplicities of points in $N^h$.
Since $h$ is the density of $\mu$, the support of $h$ is naturally defined as the support of the measure $\mu$:
\begin{equation*}
\supp(h) \triangleq \supp(\mu) \triangleq (0,+\infty) \setminus \bigcup_{G \;\text{open}, \;|\mu|(G) =0}G\,,
\end{equation*}
where $|\mu|(dt)=|h(t)|\,dt$ is the total variation measure of $\mu$. We assume
 w.l.o.g.\ that $h = h\ind_{\supp(h)}$ and define
\[
L(h) \triangleq \sup(\supp(h)) \triangleq \sup\{t >0 , |h(t)|>0\}\in [0,+\infty]\,.
\]

The constant $\lambda$ can be considered as the intensity of a Poisson immigration phenomenon on $(0,+\infty)$.
The function $h$ corresponds to self-excitation and self-repulsion phenomena:
each point of $N^h$ increases, or respectively decreases, the conditional intensity measure wherever the appropriately translated
function $h$ is positive (self-excitation), or respectively negative (self-inhibition).
\par
In the sequel, the notation $\P_\mathfrak{m}$ and $\E_\mathfrak{m}$ is used to
specify that $N^0$ has distribution $\mathfrak{m}$. In the case where
$\mathfrak{m}=\delta_{\nu}$
for some $\nu\in \mathcal{N}((-\infty,0])$, we use the notation $\E_\nu$ and $\P_\nu$.
We often consider the case when $\nu=\emptyset$,
the null measure for which there is no point on $(-\infty,0]$.

In Definition \ref{def:Hawkes}, the density $\Lambda^h$ of the conditional intensity measure of $N^h$
depends on $N^h$ itself, hence existence and uniqueness results are needed.
In Proposition~\ref{prop:couplage}, under the further assumptions that $\|h^+\|_1 <1$ and that
\begin{equation*}
\forall t>0,\quad   \int_0^t \E_{\mathfrak{m}} \Big(\int_{(-\infty,0]}h^+(u-s)\,N^0(ds) \Big)\ du < +\infty\,,
\end{equation*}
we prove that the Hawkes processes can be constructed as the solution of the equation
\begin{equation}
\label{eq:Ng}
\left\{
\begin{aligned}
 &N^{h} = N^0+\int_{(0,+\infty)\times(0,+\infty)} \delta_u \ind_{\{\theta\leq \Lambda^{h}(u)\}}\,Q(du,d\theta)\,,
\\
&\Lambda^h(u) =\biggl(\lambda+\int_{(-\infty,u)} h(u-s)\,N^h(ds)\biggr)^+ \,,
 &&u >0,
\end{aligned}
\right.
\end{equation}
where $Q$ is a $(\mathcal{F}_t)_{t\ge0}$-Poisson point process on $(0,+\infty)\times (0,+\infty)$ with
unit intensity, characterized by the fact that for every $t,h,a>0$, the random variable
$Q((t,t+h]\times (0,a])$ is $\mathcal{F}_{t+h}$-measurable, independent of $\mathcal{F}_t$, and Poisson of parameter $h a$.
Such equations have been introduced and studied
in this context by Br\'emaud and Massouli\'e~\cite{bremaudmassoulie}, see also \cite{bremaudnappotorrisi, massoulie}.

Let us remark that the counting process $(N^h_t)_{t\ge0}$ with sample paths in $\D(\R_+,\N)$ defined by $N^h_t=N^h((0,t])$ satisfies a pure jump time-inhomogeneous stochastic differential equation which is equivalent to the formulation \eqref{eq:Ng}.

If $h$ is a nonnegative function  satisfying $\|h\|_1<1$, then
there exists an alternate existence and uniqueness proof
based on a cluster representation involving subcritical continuous-time Galton-Watson trees, see \cite{hawkesoakes},
which we shall describe and use later.

\subsection{Main Results}

Our goal in this paper is to establish limit theorems for a Hawkes process $N^h$ with general reproduction function $h$. We aim at studying the limiting behavior of the process on a sliding finite time window of length $A$. We therefore introduce a time-shifted version of the Hawkes process.
Using classical notations for point processes, for $t\in\R$ we define
\begin{equation}
\label{def:shift}
S_t : N\in \mathcal{N}(\R)\mapsto  S_tN  \triangleq N(\cdot + t) \in \mathcal{N}(\R)\,.
\end{equation}
Then $S_t N$ is the image measure of $N$ by the shift by $t$ units of time, and
if $a<b$ then
\begin{equation}
\label{def:St}
\begin{aligned}
S_t N((a,b]) &= N((t+a,t+b])\,,
\\
(S_t N)|_{(a,b]} &= S_t(N|_{(t+a,t+b]})=N|_{(t+a,t+b]}(\cdot+t)\,
\end{aligned}
\end{equation}
(with abuse of notation between $N|_{(t+a,t+b]}$ and $\ind_{(t+a,t+b]}N$, etc).

The quantities of interest will be of the form
\begin{equation}
\label{eq:forme}
\frac1T \int_0^T f((S_tN^h)|_{(-A,0]})\,dt = \frac{1}{T}\int_0^T f\big(N^h(\cdot+t)|_{(-A,0]}\big) dt
\end{equation}
in which $T>0$ is a finite time horizon, $A>0$ a finite window length, and $f$
belongs to the set $\mathcal{B}_{lb}(\mathcal{N}((-A,0]))$ of real Borel functions on $\mathcal{N}((-A,0])$
which are locally bounded,  \emph{i.e.}, uniformly bounded on
$\{\nu \in \mathcal{N}((-A,0]) : \nu((-A,0])\le n\}$ for each $n\ge1$.
Such quantities appear commonly in the field of statistical inference of random processes;
by convention, time is labeled so that observation has started by time $-A$.

Using renewal theory, we are able to obtain results without any non-negativity assumption on the reproduction function $h$.
We first establish an ergodic theorem and a central limit theorem for such quantities. We then generalize the
concentration inequalities which were obtained by Reynaud-Bouret and Roy \cite{reynaudbouretroy} under the
assumptions that $h$ is a nonnegative subcritical reproduction law.
This leads us to make the following hypotheses.
Recall that $h = h\ind_{\supp(h)}$ and $L(h) \triangleq \sup(\supp(h)) \triangleq \sup\{t >0 , |h(t)|>0\}$.

\begin{hyp}
\label{hyp:h}
The signed measurable function $h: (0,+\infty) \to \R$ is such that
\begin{equation*}
L(h)  < \infty\,,
\qquad
\|h^+\|_1 \triangleq\int_{(0,+\infty)} h^+(t)\,dt <1\,.
\end{equation*}
The distribution $\mathfrak{m}$ of the initial condition $N^0$ is such that
\begin{equation}\label{eq:hyp-momentN0}
 \E_{\mathfrak{m}} \big(N^0(-L(h),0]\big) < \infty.
\end{equation}
\end{hyp}

We consider only the case of bounded support, \emph{i.e.},  of  $L(h)<\infty$, and focus on treating the difficulties
due to $h$ being signed. The techniques we use exploit this bounded support assumption, which is not very restrictive for the statistical estimation
techniques that we have in mind (\emph{e.g.}, \cite{hansenreynaudbouretrivoirard,lamberttuleaubessaihrivoirardbouretlereschereynaud,reynaudbouretrivoirardtuleaumalot}).
The assumption $\int_{(0,+\infty)} h^+(t)\,dt <1$ will be used to exploit the coupling we will construct between the process with reproduction function
$h$ and a dominating  process with reproduction function $h^+$.
Similar assumptions involving $h^+$ or $|h|$ are often made in the litterature,
see~\cite[Assumption~1]{Chen2017} and~\cite[p.6]{Raad2019},~\emph{e.g.}.

Under these assumptions, we may and will assume that the window $A<\infty$ is such that $A\ge L(h)$.
Then the quantities~\eqref{eq:forme} actually depend only on the restriction
$N^0 |_{(-A,0]}$ of the initial condition $N^0$ to $(-A,0]$.
Thus, in the sequel we identify $\mathfrak{m}$ with its marginal on $\mathcal{N}((-A,0])$ with abuse of notation.
Note that even though~\eqref{eq:hyp-momentN0} does not imply that $\E_\mathfrak{m}\big(N^0((-A,0])\big)<\infty$, our results hold under \eqref{eq:hyp-momentN0}, see Remark~\ref{rque:A-L} below.

The following important results for the Hawkes process $N^h$ are obtained using its regeneration structure, which will be investigated using a Markov process we now introduce.

In Proposition \ref{prop:XMarkov} we prove that if $A\ge L(h)$  then
the process $(X_t)_{t\ge 0}$ defined by
\[
X_t\triangleq (S_t N^h)|_{(-A,0]} \triangleq N^h|_{(t-A,t]}(\cdot+t)
\]
is a strong Markov process which admits an unique invariant law denoted by $\pi_A$,
see  Theorem \ref{thm:exist-uniq-inv-law}  below.

We introduce $\tau$, the first return time to $\emptyset$  (the null point process)
for this Markov process defined by
\begin{equation}
\label{def:tau}
\tau \triangleq \inf\{t>0 : X_{t-}\neq \emptyset, X_{t} =\emptyset\}=\inf\{t>0 : N^h[t-A,t)\neq 0, N^h(t-A,t] =0\}\,.
\end{equation}
The probability measure $\pi_A$ on $\mathcal{N}((-A,0])$ can be classically represented as the intensity of an occupation measure over an excursion: for any nonnegative Borel function $f$,
\begin{equation}\label{def:piA}
\pi_A f  
\triangleq
\frac1{\E_\emptyset(\tau)} \E_\emptyset\biggl(\int_0^{\tau} f((S_t N)|_{(-A,0]})  \,dt\biggr) \in [0,\infty]\,.
\end{equation}

Note that we may then construct a Markov process $X_t$ in equilibrium on $\R_+$ and a time-reversed Markov process in equilibrium on $\R_+$, with identical initial conditions (drawn according to $\pi_A$) and independent transitions, and build from these a Markov process in equilibrium on $\R$. This construction yields a stationary version of $N^h$ on $\R$.

We now state our main results, whose proofs are postponed to Section \ref{sec:proofs}.

\begin{thm}[Ergodic theorems]
\label{thm:point-erg+laws}
Let $N^h$ be a Hawkes process with immigration rate $\lambda >0$, reproduction function $h: (0,+\infty) \to \R$,
and initial condition $N^0$ with law $\mathfrak{m}$, satisfying Assumption~\ref{hyp:h}.
Let $A<\infty$ be such that
$A\ge L(h)$, and $\pi_A$ be the probability measure on $\mathcal{N}((-A,0])$ defined by \eqref{def:piA}.
\begin{enumerate}[\rm a)]
\item
\label{lln}
If $f \in \mathcal{B}_{lb}(\mathcal{N}((-A,0]))$ is nonnegative or $\pi_A$-integrable, then
\begin{equation*}
 \frac1T \int_0^T f((S_tN^h)|_{(-A,0]})\,dt \xrightarrow[T\to\infty]{\P_{\mathfrak{m}}-\textnormal{a.s.}} \pi_A f\,.
\end{equation*}
\item
\label{cv-to-equ}
Convergence to equilibrium for large times holds in the following sense:
\[
 \P_\mathfrak{m}\bigl((S_tN^h)|_{[0,+\infty)}  \in \cdot \bigr)
\xrightarrow[t\to\infty]{\textnormal{total variation}}
\P_{\pi_A}(N^h|_{[0,+\infty)}\in \cdot) \,.
\]
\end{enumerate}
\end{thm}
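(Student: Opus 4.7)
The plan is to exploit the regenerative structure of $N^h$ through the strong Markov process $X_t=(S_tN^h)|_{(-A,0]}$ given by Proposition~\ref{prop:XMarkov} together with the return time $\tau$ to the null state $\emptyset$ defined in~\eqref{def:tau}. Iterating $\tau$ via the strong Markov property produces regeneration epochs $0<\tau_1<\tau_2<\cdots$ with $\tau_1=\tau$ and i.i.d.\ inter-arrival times $(\tau_{k+1}-\tau_k)_{k\ge1}$ distributed as $\tau$ under $\P_\emptyset$. Correspondingly, for any Borel $f\ge0$ on $\mathcal{N}((-A,0])$ the excursion blocks
\[
Z_k=\int_{\tau_k}^{\tau_{k+1}} f(X_t)\,dt,\qquad k\ge1,
\]
are i.i.d.\ with common mean $\E_\emptyset\bigl(\int_0^\tau f(X_t)\,dt\bigr)=(\pi_Af)\,\E_\emptyset(\tau)$ by the very definition~\eqref{def:piA} of $\pi_A$. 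A crucial prerequisite, to be established separately using the coupling with a Hawkes process of nonnegative reproduction function and the ensuing $M/G/\infty$ queue analysis announced in the introduction, is $\E_\emptyset(\tau)<\infty$; this makes $\pi_A$ an honest probability measure, consistent with the existence result of Theorem~\ref{thm:exist-uniq-inv-law}.

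For part~(\ref{lln}), I would first verify that $\tau_1<\infty$ holds $\P_\mathfrak{m}$-a.s. Since $A\ge L(h)$, the initial point process $N^0|_{(-A,0]}$ has finitely many atoms $\P_\mathfrak{m}$-a.s.\ by~\eqref{eq:hyp-momentN0}, and the strong Markov process $X$ started from any such configuration hits $\emptyset$ in finite time; this can be obtained by stochastic domination by the analogous return time for the Hawkes process with nonnegative reproduction function to which $N^h$ is coupled. Once $\tau_1<\infty$ a.s., setting $N_T=\sup\{k\ge0:\tau_k\le T\}$, the classical renewal SLLN applied to $(\tau_{k+1}-\tau_k)_k$ yields $N_T/T\to 1/\E_\emptyset(\tau)$ a.s., and Kolmogorov's SLLN applied to $(Z_k)_k$ yields $\frac{1}{N_T}\sum_{k=1}^{N_T}Z_k\to(\pi_Af)\,\E_\emptyset(\tau)$ a.s. Sandwiching $\int_0^Tf(X_t)\,dt$ between $\sum_{k=1}^{N_T-1}Z_k$ and $\sum_{k=1}^{N_T+1}Z_k$ in the nonnegative case, or bounding the initial prefix and the terminal residue using $\E_\emptyset\bigl(\int_0^\tau|f|\,dt\bigr)=(\pi_A|f|)\,\E_\emptyset(\tau)<\infty$ in the $\pi_A$-integrable case, then dividing by $T\sim N_T\,\E_\emptyset(\tau)$ delivers the limit $\pi_A f$.

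For part~(\ref{cv-to-equ}), the plan is to couple at regeneration times a copy $X^1$ started from $\mathfrak{m}$ with a copy $X^2$ started from $\pi_A$. The diffuseness of the driving Poisson measure $Q$ makes the law of $\tau$ under $\P_\emptyset$ absolutely continuous with respect to Lebesgue measure, in particular non-lattice. Standard regenerative coupling (\emph{à la} Lindvall, Athreya--Ney) then produces a random coupling time $T<\infty$ a.s.\ after which $X^1$ and $X^2$ agree pathwise; hence so do the restrictions $(S_tN^h)|_{[0,+\infty)}$ for $t\ge T$. The total variation distance between the laws of $(S_tN^h)|_{[0,+\infty)}$ under $\P_\mathfrak{m}$ and under $\P_{\pi_A}$ is then bounded by $\P(T>t)$, which tends to $0$.

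The main obstacle I anticipate is the estimate $\E_\emptyset(\tau)<\infty$, and more generally the control of the initial excursion under $\P_\mathfrak{m}$ from only the single-moment assumption~\eqref{eq:hyp-momentN0} together with $\|h^+\|_1<1$ and $L(h)<\infty$. Both rest on the comparison with the $M/G/\infty$ queue via the thinning coupling advertised in the introduction; once these excursion-time bounds are secured, Theorem~\ref{thm:point-erg+laws} follows from the classical regenerative machinery sketched above.
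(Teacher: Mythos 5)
Your proposal is correct and follows essentially the same route as the paper: for part~(\ref{lln}) the paper uses exactly your regenerative decomposition into i.i.d.\ excursion integrals (Theorem~\ref{thm:reg-seq}), the SLLN applied both to the blocks and to $f\equiv 1$ to get $T/K_T\to\E_\emptyset(\tau)$, with finiteness of $\tau$, $\tau_0$ and $\E_\emptyset(\tau)$ supplied by the coupling $N^h\le N^{h^+}$ and the M/G/$\infty$ estimates (Propositions~\ref{prop:tauleqtau^+} and~\ref{prop:tau}), and the signed case handled via $f=f^+-f^-$. For part~(\ref{cv-to-equ}) the paper simply cites Thorisson's total-variation limit theorem for regenerative processes with absolutely continuous cycle length (checking the density of $\tau$ under $\P_\emptyset$ by splitting off the independent exponential first atom $U_1^h$), which is precisely the Lindvall/Athreya--Ney regenerative coupling you propose to carry out by hand, so the two arguments differ only in citing versus executing the standard coupling.
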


The following result provides the asymptotics of the fluctuations around the convergence result~\ref{lln}),
and yields asymptotically exact confidence intervals for it. We define the variance \begin{equation}
\label{sig2f}
\sigma^2(f)
\triangleq \frac1{\E_\emptyset(\tau)}
\E_\emptyset\biggl(\biggl(\int_0^{\tau} \big(f((S_tN^h)|_{(-A,0]}) -\pi_Af\big)\,dt\biggr)^2 \biggr) .
\end{equation}

\begin{thm}[Central limit theorem]
\label{thm:clt}
Let $N^h$ be a Hawkes process with immigration rate $\lambda >0$, reproduction function $h: (0,+\infty) \to \R$,
and initial law $\mathfrak{m}$, satisfying Assumption~\ref{hyp:h}.
Let $A<\infty$ be such that $A\ge L(h)$,
the hitting time $\tau$ be given by \eqref{def:tau},
and the probability measure $\pi_A$ on $\mathcal{N}((-A,0])$ be given by \eqref{def:piA}.
If $f \in \mathcal{B}_{lb}(\mathcal{N}((-A,0]))$ is $\pi_A$-integrable and satisfies
{$\sigma^2(f)<\infty$} then
\begin{equation*}
\label{eq:clt}
\sqrt{T} \biggl( \frac1T \int_0^T  f((S_tN^h)|_{(-A,0]}) \,dt - \pi_A f \biggr)
 \xrightarrow[T\to\infty]{\textnormal{in law}} \mathcal{N}(0, \sigma^2(f))\,.
\end{equation*}
\end{thm}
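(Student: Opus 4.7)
The plan is to apply the classical regenerative (renewal-reward) CLT using the strong Markov process $X_t = (S_t N^h)|_{(-A,0]}$ from Proposition~\ref{prop:XMarkov} and its successive returns to the state $\emptyset$. The key point is that by the strong Markov property the excursions between consecutive visits to $\emptyset$ are i.i.d., and the definition~\eqref{def:piA} of $\pi_A$ is precisely the Kac-type formula that makes $f - \pi_A f$ have zero mean over each excursion.

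First, define $\tau_0 = \inf\{t \ge 0 : X_t = \emptyset\}$ and recursively $\tau_{k+1} = \inf\{t > \tau_k : X_{t-} \ne \emptyset,\ X_t = \emptyset\}$. Under $\P_\emptyset$ the inter-arrival times $\xi_k = \tau_k - \tau_{k-1}$ are i.i.d.\ with mean $\E_\emptyset(\tau) \in (0,\infty)$, and setting $Z_k = \int_{\tau_{k-1}}^{\tau_k}(f(X_t) - \pi_A f)\,dt$, Equation~\eqref{def:piA} gives $\E_\emptyset(Z_1) = 0$ while the hypothesis $\sigma^2(f) < \infty$ combined with~\eqref{sig2f} gives $\E_\emptyset(Z_1^2) = \sigma^2(f)\,\E_\emptyset(\tau) < \infty$. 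With $N_T = \max\{k \ge 0 : \tau_k \le T\}$, the strong law of large numbers yields $N_T/T \to 1/\E_\emptyset(\tau)$ almost surely, and Anscombe's theorem applied to the i.i.d.\ sequence $(Z_k)_{k \ge 2}$ gives
\begin{equation*}
\frac{1}{\sqrt{T}} \sum_{k=2}^{N_T} Z_k \xrightarrow[T \to \infty]{\textnormal{in law}} \mathcal{N}(0, \sigma^2(f)),
\end{equation*}
the asymptotic variance arising from the rescaling $\sigma^2(f)\,\E_\emptyset(\tau) \cdot (1/\E_\emptyset(\tau))$.

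It remains to show that the two boundary terms $R_0 = \int_0^{\tau_1}(f(X_t) - \pi_A f)\,dt$ and $R_T = \int_{\tau_{N_T}}^T (f(X_t) - \pi_A f)\,dt$ are $o(\sqrt{T})$ in $\P_\mathfrak{m}$-probability, after which Slutsky's lemma closes the argument. For $R_T$, a standard size-biasing argument bounds $|R_T|$ by the integral of $|f(X_t) - \pi_A f|$ over the straddling excursion $[\tau_{N_T}, \tau_{N_T+1}]$, which has finite mean under $\sigma^2(f) < \infty$, so $R_T/\sqrt{T} \to 0$ in probability. The main obstacle is the control of $R_0$ under a general initial law $\mathfrak{m}$: one must show that $\tau_1 < \infty$ $\P_\mathfrak{m}$-a.s.\ and that $|R_0|$ is a.s.\ finite. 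This rests on the coupling with a purely self-exciting Hawkes process from Proposition~\ref{prop:couplage} and on the exponential-moment estimates for M/G/$\infty$ renewal times established earlier in the paper; combined with the moment bound~\eqref{eq:hyp-momentN0} and local boundedness of $f$, these make $R_0$ an a.s.\ finite initial-condition-dependent random variable, so $R_0/\sqrt{T} \to 0$ trivially.
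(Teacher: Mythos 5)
Your proposal follows essentially the same regenerative route as the paper: split $\int_0^T(f(X_t)-\pi_Af)\,dt$ into an initial delay, a sum of i.i.d.\ excursion integrals $Z_k$ with $\E_\emptyset(Z_1)=0$ and $\E_\emptyset(Z_1^2)=\E_\emptyset(\tau)\,\sigma^2(f)$ (Theorem~\ref{thm:reg-seq}), and a terminal remainder, then let a random-index CLT do the work. The differences are mostly packaging: you invoke Anscombe's theorem directly, whereas the paper replaces the random index $K_T$ by the deterministic index $\lfloor T/\E_\emptyset(\tau)\rfloor$ and controls the discrepancy by Kolmogorov's maximal inequality (in effect reproving the Anscombe step by hand); and you treat the terminal term through the straddling excursion, whereas the paper deduces tightness of $T-\tau_{K_T}$ and of the corresponding integral from the convergence to equilibrium of Theorem~\ref{thm:point-erg+laws}~\ref{cv-to-equ}). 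Your treatment of the initial term (a.s.\ finiteness of $\tau_1$ and of the integral, hence $o(\sqrt T)$) matches the paper's.

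One step is not justified as stated: the claim that the straddling reward $W_{N_T+1}\triangleq\int_{\tau_{N_T}}^{\tau_{N_T+1}}|f(X_t)-\pi_Af|\,dt$ ``has finite mean under $\sigma^2(f)<\infty$''. By the inspection paradox this excursion is size-biased, so its limiting mean is $\E_\emptyset\bigl(\tau\int_0^\tau|f(X_t)-\pi_Af|\,dt\bigr)/\E_\emptyset(\tau)$; since $f$ is only locally bounded and $\sigma^2(f)$ controls the second moment of the \emph{signed} integral $\int_0^\tau(f(X_t)-\pi_Af)\,dt$, not of its absolute-value counterpart, finiteness of this size-biased mean does not follow from the hypotheses. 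Fortunately finite mean is not needed: tightness of $W_{N_T+1}$ suffices to get $R_T=o_{\P}(\sqrt T)$, and tightness does hold, e.g.\ because
\begin{equation*}
\P_\mathfrak{m}(W_{N_T+1}>a)=\int_0^T \P_\emptyset(\tau>T-s,\;W>a)\,dU(s)\xrightarrow[T\to\infty]{}\frac{\E_\emptyset\bigl(\tau\,\ind_{\{W>a\}}\bigr)}{\E_\emptyset(\tau)}\,,
\end{equation*}
where $U$ is the renewal function of the cycle lengths and $W=\int_0^\tau|f(X_t)-\pi_Af|\,dt$ (the key renewal theorem applies since $\tau$ has a density and finite mean under $\P_\emptyset$), and the right-hand side vanishes as $a\to\infty$ by dominated convergence; alternatively, argue as the paper does via Theorem~\ref{thm:point-erg+laws}~\ref{cv-to-equ}). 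With that repair the argument is complete.
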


Laws of large numbers and central limit theorems for Hawkes processes, as $T\rightarrow +\infty$, have been much investigated in the case of nonnegative reproduction functions $h$ (\emph{e.g.}, \cite{bacrydelattrehoffmannmuzySPA,jaissonrosenbaum2015,jaissonrosenbaum2016,zhu}).
The convergences in these papers concern the instantaneous values of the counting process of
 the point measure $N^h$, and the proofs usually rely on martingale techniques.
 Here the results concern sliding windows of arbitrary finite length of the point measure $N^h$, and are obtained with the renewal approach that is also developed for establishing non-asymptotic exponential concentration bounds, as explained below.

The first entrance time at $\emptyset$ is defined by
\begin{equation}
\label{def:tau0}
\tau_0 \triangleq \inf\{t{\ge}0 : N^h(t-A,t] =0\}\,.
\end{equation}
Recall that $x^+=\max(x,0)$ and $x^-=\max(-x,0)$ for $x\in\R$, and let
 $(x)_{\pm}^k = (x^{\pm})^k$.

\begin{thm}[Concentration inequalities]
\label{thm:non-asympt-exp-bd}
Let $N^h$ be a Hawkes process with immigration rate $\lambda >0$, reproduction function $h: (0,+\infty) \to \R$,
and initial law $\mathfrak{m}$, satisfying Assumption~\ref{hyp:h}.
Let $A<\infty$ be such that $A\ge L(h)$.
Consider the hitting time $\tau$ given by \eqref{def:tau},
the entrance time $\tau_0$ given by  \eqref{def:tau0},
and the probability measure on $\mathcal{N}((-A,0])$ defined in \eqref{def:piA}.
Consider $f \in \mathcal{B}_{lb}(\mathcal{N}((-A,0]))$ taking its values in a bounded interval $[a,b]$,
and define $\sigma^2(f)$ as in \eqref{sig2f} and
\begin{align*}
& c^\pm(f) \triangleq
\sup_{k\ge3}
\Biggl(
\frac2{k!}
\frac{\E_\emptyset\bigl(\bigl(\int_0^{\tau} (f((S_tN^h)|_{(-A,0]}) -\pi_Af)\,dt\bigr)^k_\pm \bigr)}%
{\E_\emptyset(\tau) \sigma^2(f)}
\Biggr)^{\frac1{k-2}}\,,
\\
& c^\pm(\tau)
\triangleq
\sup_{k\ge3}
\biggl(
\frac2{k!}
\frac{\E_\emptyset\bigl( (\tau-\E_\emptyset(\tau))_\pm^k \bigr)}%
{\Var_\emptyset (\tau)}
\biggr)^{\frac1{k-2}}\,,\\
&
c^+(\tau_0)
\triangleq
\sup_{k\ge3}
\biggl(
\frac2{k!}
\frac{\E_{\mathfrak{m}}\bigl((\tau_0-\E_\mathfrak{m}(\tau_0))_+^k\bigr)}%
{\Var_\mathfrak{m}(\tau_0)}
\biggr)^{\frac1{k-2}}\,.
\end{align*}
Then, for all $\varepsilon>0$, $T>0$ and $u\in [0,1)$,
\begin{align}
\label{eq:conc-ineq}
&\P_\mathfrak{m}\biggl( \biggl|\frac1T \int_0^T  f((S_tN^h)|_{(-A,0]})\,dt  - \pi_A f \biggr| \ge \varepsilon \biggr)
\notag\\
&\quad\le
\exp\left(
-\frac{((1-u)T\eps - |b-a| \E_\emptyset(\tau))^2}
{8 T \sigma^2(f) + 4 c^+(f)((1-u)T\eps - |b-a| \E_\emptyset(\tau)) }
\right)
\notag\\
&\qquad+
\exp\left(
-\frac{((1-u)T)\eps - |b-a| \E_\emptyset(\tau))^2}
{8 T \sigma^2(f) + 4 c^-(f)((1-u)T\eps - |b-a| \E_\emptyset(\tau)) }
\right)
\notag\\
&\qquad+
\exp\left(
-\frac{((1-u)T\eps - |b-a| \E_\emptyset(\tau))^2}
{8T |b-a|^2\frac{\Var_\emptyset (\tau)}{\E_\emptyset(\tau)}+ 4 |b-a| c^+(\tau)((1-u)T\eps - |b-a| \E_\emptyset(\tau)) }
\right)
\notag\\
&\qquad+
\exp\left(
-\frac{((1-u)T\eps - |b-a| \E_\emptyset(\tau))^2}
{8T |b-a|^2\frac{\Var_\emptyset (\tau)}{\E_\emptyset(\tau)}+ 4 |b-a| c^-(\tau)((1-u)T\eps - |b-a| \E_\emptyset(\tau)) }
\right)
\notag\\
&\qquad+
\exp\left(
-\frac{(uT\eps  - 2|b-a| \E_\mathfrak{m}(\tau_0))^2}
{8|b-a|^2\Var_\mathfrak{m} (\tau_0) + 4 |b-a| c^+(\tau_0)(uT\eps  - 2|b-a| \E_\mathfrak{m}(\tau_0)) }
\right).
\end{align}
If $N|_{(-A,0]} =\emptyset$ then the last term of the r.h.s. is null and the upper bound is true with $u=0$
in the other terms.
\end{thm}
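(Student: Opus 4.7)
The plan rests on the regenerative decomposition of $N^h$ at the successive return times of the Markov process $(X_t)$ to $\emptyset$. By the strong Markov property (Proposition~\ref{prop:XMarkov}), setting $S_0=\tau_0$ and $S_n = \tau_0 + \tau_1 + \cdots + \tau_n$ where $\tau_1,\tau_2,\ldots$ are i.i.d.\ copies of $\tau$ under $\P_\emptyset$, the centered random variables
$$V_i \triangleq \int_{S_{i-1}}^{S_i}\bigl(f((S_tN^h)|_{(-A,0]})-\pi_A f\bigr)\,dt, \qquad i\ge1,$$
are i.i.d., have mean zero by \eqref{def:piA}, and satisfy $|V_i|\le|b-a|\tau_i$. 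The constants $c^\pm(f)$ and $c^\pm(\tau)$ are precisely the Bernstein-type constants making the one-sided moment conditions $\E_\emptyset(V_1)_\pm^k \le \tfrac{k!}{2}\,\E_\emptyset(\tau)\,\sigma^2(f)\,c^\pm(f)^{k-2}$ (and its analogue for $\tau$) hold for all $k\ge3$, and $c^+(\tau_0)$ plays the same role for $\tau_0-\E_\mathfrak{m}(\tau_0)$. Finiteness of these constants follows from the exponential moment bounds on $\tau$ and $\tau_0$ established earlier via the M/G/$\infty$ coupling.

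The core identity to exploit is
$$\int_0^T f((S_tN^h)|_{(-A,0]})\,dt - T\pi_A f = B_0 + \sum_{i=1}^{N(T)} V_i + R_T,$$
where $N(T)=\max\{n\ge0:S_n\le T\}$, $|B_0|\le|b-a|\tau_0$ is the boundary term on $[0,\tau_0]$, and $|R_T|\le|b-a|\tau_{N(T)+1}$ is the residual on $[S_{N(T)},T]$. The natural split of the deviation budget is $T\eps=(T-\sqrt T)\eps+\sqrt T\eps$. The $\sqrt T\eps$ piece absorbs $|B_0|$: after recentering by $\E_\mathfrak{m}(\tau_0)$, a one-sided Bernstein inequality applied to $\tau_0-\E_\mathfrak{m}(\tau_0)$ with variance $\Var_\mathfrak{m}(\tau_0)$ and constant $c^+(\tau_0)$ produces the fifth exponential in \eqref{eq:conc-ineq}.

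For the $(T-\sqrt T)\eps$ piece, I absorb $|R_T|\le|b-a|\tau_{N(T)+1}$ into a translation by $|b-a|\E_\emptyset(\tau)$, which creates the recurring offset $(T-\sqrt T)\eps - |b-a|\E_\emptyset(\tau)$. The random index $N(T)$ is then handled through the envelope $|N(T)\E_\emptyset(\tau)-T|\le|S_{N(T)}-N(T)\E_\emptyset(\tau)| + \tau_{N(T)+1}$, together with a union bound over whether $N(T)$ exceeds or falls short of its typical value $T/\E_\emptyset(\tau)$. This splits the bulk estimate into two kinds of i.i.d.\ deviations: deviations of $\sum_{i=1}^n V_i$ from $0$ with $n\approx T/\E_\emptyset(\tau)$, controlled by Bernstein with variance proportional to $T\sigma^2(f)$ and constants $c^\pm(f)$, producing the first two exponentials; and deviations of $\sum_{i=1}^n(\tau_i-\E_\emptyset(\tau))$, controlled by Bernstein with variance proportional to $T\,|b-a|^2\Var_\emptyset(\tau)/\E_\emptyset(\tau)$ and constants $c^\pm(\tau)$, producing the middle two exponentials; the factor $|b-a|^2$ there reflects the propagation of a mismatch in the number of completed excursions through the $|b-a|$-Lipschitz envelope of the integrand.

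The main obstacle will be the bookkeeping of constants. The numerical factors $8$ and $4$ in the denominators arise from equally apportioning the budget $(T-\sqrt T)\eps - |b-a|\E_\emptyset(\tau)$ between the four bulk exceedance events and from combining them via union bound with the one-sided Bernstein inequality $\P(\sum_{i=1}^n Y_i\ge x)\le\exp(-x^2/(2n\sigma^2+2cx))$. The hypothesis that $T$ be sufficiently large is needed so that $(T-\sqrt T)\eps$ dominates $|b-a|\E_\emptyset(\tau)$ and $\sqrt T\eps$ dominates $2|b-a|\E_\mathfrak{m}(\tau_0)$, keeping every numerator positive. Finally, when $N^0|_{(-A,0]}=\emptyset$ one has $\tau_0=0$ almost surely, so $B_0\equiv0$, the $\sqrt T$ buffer is unnecessary, the last exponential vanishes, and the split becomes $T\eps=T\eps$, yielding the refined bound with $T$ in place of $T-\sqrt T$ in the four remaining exponents.
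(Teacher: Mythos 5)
Your overall architecture --- regenerative decomposition at the visits of $X$ to $\emptyset$, a $\sqrt T$ buffer for the delay $\tau_0$, and one-sided Bernstein inequalities with the constants $c^\pm(f)$, $c^\pm(\tau)$, $c^+(\tau_0)$ --- is the right one, but your treatment of the bulk term has a genuine gap. You sum over the \emph{random} number $N(T)$ of completed excursions and claim that the residual $|R_T|\le|b-a|\,\tau_{N(T)+1}$ can be ``absorbed into a translation by $|b-a|\E_\emptyset(\tau)$''. It cannot: $\tau_{N(T)+1}$ is the length of the excursion straddling $T$, which is neither a.s.\ bounded by $\E_\emptyset(\tau)$ nor of mean $\E_\emptyset(\tau)$ (by size-biasing its mean exceeds $\E_\emptyset(\tau)$), so this step leaves an unbounded random term that none of your five exponentials accounts for. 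Similarly, the random index cannot be removed by ``a union bound over whether $N(T)$ exceeds or falls short of its typical value'': on the event $\{n_-\le N(T)\le n_+\}$ you would need to control $\max_{n_-\le n\le n_+}\bigl|\sum_{i\le n}V_i\bigr|$ (and $N(T)$ is not independent of the $V_i$), i.e.\ a maximal form of Bernstein's inequality, which degrades the constants and does not deliver \eqref{eq:conc-ineq} with the stated factors $8$ and $4$.

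The paper sidesteps both issues by never using the random count: it decomposes $\int_0^T\tilde f(X_t)\,dt$ with the \emph{deterministic} number $\lfloor T/\E_\emptyset(\tau)\rfloor$ of excursions, so the terminal boundary term is $\int_{\tau_{\lfloor T/\E_\emptyset(\tau)\rfloor}}^T\tilde f(X_t)\,dt$, bounded by $|b-a|\,\bigl|T-\tau_{\lfloor T/\E_\emptyset(\tau)\rfloor}\bigr|$, and then writes
\begin{equation*}
T-\tau_{\lfloor T/\E_\emptyset(\tau)\rfloor}
= -\tau_0-\sum_{k=1}^{\lfloor T/\E_\emptyset(\tau)\rfloor}\bigl(\tau_k-\tau_{k-1}-\E_\emptyset(\tau)\bigr)
+\Bigl(T-\Bigl\lfloor \tfrac{T}{\E_\emptyset(\tau)}\Bigr\rfloor\E_\emptyset(\tau)\Bigr),
\end{equation*}
where the last, deterministic, term lies in $[0,\E_\emptyset(\tau))$. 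This identity is the true origin of the offset $|b-a|\E_\emptyset(\tau)$, of the factor $2\tau_0$ (hence the $2|b-a|\E_\mathfrak{m}(\tau_0)$ appearing in the fifth term), and of the two middle exponentials, which come from Bernstein applied to the i.i.d.\ centered lengths $\tau_k-\tau_{k-1}-\E_\emptyset(\tau)$ with the fixed index $\lfloor T/\E_\emptyset(\tau)\rfloor$; no maximal inequality and no control of the straddling excursion are needed. Reworking your bulk step along these lines (fixed index, residual expressed through the centered excursion lengths) closes the gap and yields exactly the stated bound, including the refinement when $N^0|_{(-A,0]}=\emptyset$, where $\tau_0=0$ and the $\sqrt T$ buffer is dropped.
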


In the proof of this theorem, we split the integral from $0$ to $T$ into three parts:
an initial integral from $0$ to $\tau_0$,
a sum of a deterministic number converging to infinity of i.i.d.~integrals over cycles,
and a last integral ending at $T$,
see~\eqref{sum-conc-ineq} below.
The control of the first integral requires to control $\tau_0$, and the control of the last integral requires
to control $\tau_0$ and a similar sum of i.i.d.~random variables.
We control the two sums of i.i.d.~random variables by separating the deviations above and below the mean for precision
and using Bernstein's inequality, which explains the presence of four terms involving $\tau$ in
the r.h.s.~of~\eqref{eq:conc-ineq}. The fifth term is obtained from a control
of $\tau_0$ and depends heavily on the initial condition $\mathfrak{m}$. This explains the introduction of the constant $u$ which can be chosen null when $\tau_0=0$.

We now provide a tractable upper bound, using the fact that the hitting time $\tau$
admits an exponential moment (see Proposition \ref{prop:tau}). For simplicity the process starts at $\emptyset$.

\begin{cor}\label{cor:dev_simple}
Under assumptions and notation of Theorem~\ref{thm:non-asympt-exp-bd}, there exists $\alpha>0$ such that
$\E_\emptyset(e^{\alpha\tau})<\infty$.
Let
\[
v=\frac{2(b-a)^2}{\alpha^2}\Big\lfloor\frac{T}{\E_\emptyset(\tau)}\Big\rfloor \E_\emptyset(e^{\alpha\tau}) e^{\alpha \E_\emptyset(\tau)}\,,
\qquad
c= \frac{|b-a|}{\alpha}\,.
\]
Then for all $T>0$, for all  $\varepsilon>0$,
\begin{align*}
\P_\emptyset\biggl( &\biggl|\frac1T \int_0^T  f((S_tN^h)|_{(-A,0]}) \,dt  - \pi_A f \biggr| \ge \varepsilon \biggr)\le 4 \exp\left(-\frac{\Bigl( T\varepsilon-|b-a|\E_\emptyset(\tau) \Bigr)^2}{4 \left(2v+ c(T\varepsilon-|b-a|\E_\emptyset(\tau)) \right)}  \right)\,,
\end{align*}
or equivalently, for all $1 \ge \eta>0$,
\begin{equation}
P_\emptyset\biggl( \biggl|\frac1T \int_0^T  f((S_tN^h)|_{(-A,0]})\,dt - \pi_A f \biggr| \ge \varepsilon_\eta \biggr)\leq \eta\,,
\end{equation}
where
\[
\varepsilon_\eta=\frac{1}{T}\left(|b-a|\E_\emptyset(\tau)-2c\log\big(\frac{\eta}{4}\big)+\sqrt{4c^2\log^2\big(\frac{\eta}{4}\big)-8 v \log\big(\frac{\eta}{4}\big)}\right)\,.
\]
\end{cor}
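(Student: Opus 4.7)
The plan is to specialize Theorem~\ref{thm:non-asympt-exp-bd} to the initial condition $N^0|_{(-A,0]}=\emptyset$---which removes the last exponential term and allows replacing $T-\sqrt T$ by $T$ in the remaining four terms---and then to replace each of the five Bernstein constants $\sigma^2(f)$, $c^\pm(f)$, $\Var_\emptyset(\tau)/\E_\emptyset(\tau)$ and $c^\pm(\tau)$ appearing in Theorem~\ref{thm:non-asympt-exp-bd} by uniform upper bounds depending only on $\alpha$, $M:=\E_\emptyset(e^{\alpha\tau})$ and $\E_\emptyset(\tau)$, where $\alpha>0$ is furnished by Proposition~\ref{prop:tau}.

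I would start from the Taylor inequality $e^{\alpha\tau}\ge(\alpha\tau)^k/k!$, which gives $\E_\emptyset(\tau^k)\le k!\,M/\alpha^k$ for every integer $k\ge0$. Since the excursion integral $W:=\int_0^{\tau}\bigl(f((S_tN^h)|_{(-A,0]})-\pi_A f\bigr)\,dt$ satisfies $|W|\le(b-a)\tau$ and $\E_\emptyset(W)=0$, one deduces the Bernstein moment inequality $\E_\emptyset(|W|^k)\le k!(b-a)^k M/\alpha^k$, equivalently $\E_\emptyset(|W|^k)\le (k!/2)V_0 c^{k-2}$ with $V_0=2(b-a)^2M/\alpha^2$ and $c=(b-a)/\alpha$. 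The analogous bound for $\tau-\E_\emptyset(\tau)$ follows from $\E_\emptyset\bigl(e^{\alpha|\tau-\E_\emptyset(\tau)|}\bigr)\le 2Me^{\alpha\E_\emptyset(\tau)}$, obtained by combining $e^{\alpha|y|}\le e^{\alpha y}+e^{-\alpha y}$ with $\E_\emptyset(e^{-\alpha\tau})\le1$, and yields a Bernstein moment bound for $\tau-\E_\emptyset(\tau)$ with effective variance $2Me^{\alpha\E_\emptyset(\tau)}/\alpha^2$ and tail constant $1/\alpha$.

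These Bernstein conditions provide upper bounds for the five constants in Theorem~\ref{thm:non-asympt-exp-bd}: in particular $\sigma^2(f)\le V_0/\E_\emptyset(\tau)$ and $\Var_\emptyset(\tau)\le 2M/\alpha^2$, while the tail constants $c^\pm(f)$ and $|b-a|c^\pm(\tau)$ are controlled by $c$ after absorbing the ratio between the effective variance and the true variance into the factor $e^{\alpha\E_\emptyset(\tau)}$. Substituting into the four denominators of Theorem~\ref{thm:non-asympt-exp-bd} and using the elementary inequality $T/\E_\emptyset(\tau)\le \lfloor T/\E_\emptyset(\tau)\rfloor\,e^{\alpha\E_\emptyset(\tau)}$ (valid for $T$ sufficiently large) shows that each denominator is bounded above by the common quantity $8v+4cX$ with $X=T\varepsilon-|b-a|\E_\emptyset(\tau)$: the factor $e^{\alpha\E_\emptyset(\tau)}$ in the definition of $v$ is exactly what is needed to absorb both the integer rounding $T/\E_\emptyset(\tau)\mapsto\lfloor T/\E_\emptyset(\tau)\rfloor$ and the centring cost for $\tau$. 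Summing the four identical exponentials gives the coefficient $4$ and the first displayed inequality.

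The equivalent form in terms of $\varepsilon_\eta$ follows by setting the right-hand side equal to $\eta$ and solving the quadratic $X^2+4c\log(\eta/4)X+8v\log(\eta/4)=0$ in $X$: since $\log(\eta/4)<0$ for $\eta\le1$, the discriminant $16c^2\log^2(\eta/4)-32v\log(\eta/4)$ is non-negative, and the positive root reads $X=-2c\log(\eta/4)+\sqrt{4c^2\log^2(\eta/4)-8v\log(\eta/4)}$; restoring $X=T\varepsilon_\eta-|b-a|\E_\emptyset(\tau)$ and dividing by $T$ produces the stated formula for $\varepsilon_\eta$. The main obstacle is the third-paragraph bookkeeping: verifying that the single pair $(v,c)$ simultaneously controls all four Bernstein denominators without leaking an extra multiplicative factor, which requires tracing how the factor $e^{\alpha\E_\emptyset(\tau)}$ compensates the various discrepancies between $\E_\emptyset(W^2)$, $V_0$, $T/\E_\emptyset(\tau)$ and $\lfloor T/\E_\emptyset(\tau)\rfloor$.
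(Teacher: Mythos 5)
Your strategy is to deduce the corollary from the \emph{statement} of Theorem~\ref{thm:non-asympt-exp-bd} by bounding the five constants that appear there, and this is where the argument breaks. The constants $c^{\pm}(f)$ and $c^{\pm}(\tau)$ are by definition normalised by the \emph{true} variances $\E_\emptyset(\tau)\sigma^2(f)=\E_\emptyset(W^2)$ and $\Var_\emptyset(\tau)$, where $W=\int_0^{\tau}\bigl(f(X_t)-\pi_A f\bigr)\,dt$. Your (correct) moment bounds $\E_\emptyset(|W|^k)\le \tfrac{k!}{2}V_0\,c^{k-2}$ only yield $c^{\pm}(f)\le \tfrac{V_0}{\E_\emptyset(W^2)}\,c$ (the supremum over $k$ is attained at $k=3$), and the ratio $V_0/\E_\emptyset(W^2)$ is not controlled by $e^{\alpha\E_\emptyset(\tau)}$ --- it is simply unbounded, because $\E_\emptyset(W^2)$ can be arbitrarily small compared with $(b-a)^2\E_\emptyset(\tau^2)$ (take an $f$ whose excursion integral nearly cancels on typical excursions but not on rare long ones: already the $k=3$ term makes $c^{+}(f)$ of order $(b-a)$ times the length of those long excursions, which can exceed $(b-a)/\alpha$). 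The same objection applies to the claim $|b-a|c^{\pm}(\tau)\le c$. Since $X=T\varepsilon-|b-a|\E_\emptyset(\tau)$ is unbounded as $\varepsilon$ ranges over $(0,\infty)$, the comparison you need, $8T\sigma^2(f)+4c^{+}(f)X\le 8v+4cX$, fails for large $X$ whenever $c^{+}(f)>c$; so the corollary cannot be obtained by monotone substitution of bounds into \eqref{eq:conc-ineq}.

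The repair is the paper's route: go back \emph{inside} the proof of the theorem. Under $\P_\emptyset$ one has $\tau_0=0$, the decomposition reduces to the two sums over the $\lfloor T/\E_\emptyset(\tau)\rfloor$ i.i.d.\ cycles (with $T$ in place of $T-\sqrt{T}$), and Bernstein's inequality is applied afresh to each sum using the surrogate pair $(v,c)$ built exactly from your moment bounds --- Bernstein only requires \emph{some} admissible pair, not the optimal one normalised by the true variance, so no comparison with $\sigma^2(f)$, $c^{\pm}(f)$, $c^{\pm}(\tau)$ is ever needed. Two smaller points: the paper gets $\E_\emptyset(|\tau-\E_\emptyset(\tau)|^n)\le \tfrac{n!}{\alpha^n}\E_\emptyset(e^{\alpha\tau})e^{\alpha\E_\emptyset(\tau)}$ from $|\tau-\E_\emptyset(\tau)|\le\tau+\E_\emptyset(\tau)$, avoiding your extra factor $2$ (with $2Me^{\alpha\E_\emptyset(\tau)}$ you would not recover the stated $v$); and since the floor $\lfloor T/\E_\emptyset(\tau)\rfloor$ is kept inside $v$, the paper never needs your inequality $T/\E_\emptyset(\tau)\le\lfloor T/\E_\emptyset(\tau)\rfloor e^{\alpha\E_\emptyset(\tau)}$, so no ``$T$ sufficiently large'' proviso enters at this step. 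Your resolution of the quadratic giving $\varepsilon_\eta$ is correct and matches the paper.
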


\begin{rem}\label{rque:A-L}
All these results hold under \eqref{eq:hyp-momentN0} even if
$\E_\mathfrak{m}(N^0((-A,0]))=+\infty$. Indeed,
\begin{align*}
 \frac{1}{T}\int_0^T f\big(N^h(\cdot+t)|_{(-A,0]}\big) \,dt
&=\frac{1}{T}\int_0^{A-L(h)} f\big(N^h(\cdot+t)|_{(-A,0]}\big) \,dt
\\
&\qquad+\frac{1}{T}\int_{A-L(h)}^T f\big(N^h(.+t)|_{(-A,0]}\big) \,dt\,.
\end{align*}
The first r.h.s.\ term converges $\P_\mathfrak{m}$-a.s.\ to zero, even when multiplied by $\sqrt{T}$.
For the second r.h.s.\  term, we can apply the Markov property at time $A-L(h)$ (which will be justified when proving
that  $(S_.N^h)|_{(-A,0]}$ is a Markov process) and show that $$\E_{(S_{A-L(h)}N^h)|_{(-A,0]}}(N^0((-A,0]))<+\infty.$$
\end{rem}

\begin{rem}
\label{rmk-phi-coupling}
As noted after Definition \ref{def:Hawkes}, the Hawkes process $N^h$ is the special
case for $\phi(x)=(\lambda+x)^+$ of the more general setting
in which a function $\phi:\R\to\R^+$ is given and
the Hawkes process $N^{h,\phi}$ is required to have conditional intensity
\begin{equation}
\label{def:lambda_gen}
\Lambda^{h,\phi}(t)=\phi\left(\int_{(-\infty,t)} h(t-u) N^{h,\phi} (du)\right).
\end{equation}
The results of this article can be extended to this more general setting
under the growth assumption that there exist $\lambda$ and $a$ in $[0,\infty)$ such that
\[
\phi(x)\le \lambda + a x^+\,,\quad x\in\R\,,
\]
 and the stability assumption  that the compactly supported function $h$ satisfies
 \begin{equation}
\label{ext-stab}
 a\int_{(0,+\infty)} h^+(t)\,dt <1\,,
\end{equation}
without any additional regularity or monotonicity assumption on $\phi$.
The main point for this is to construct a thinning coupling $N^{h,\phi}\le N^{h^+}$ similar to the coupling
$N^h\le N^{h^+}$  in Proposition~\ref{prop:couplage}~\ref{prop:couplage-ii}) below, for which
technical details can be found in Appendix \ref{app:extension}.
We chose to present this special case first since it contains all the difficulties and
constitutes the case where the loss of information by coupling is the lowest.
\end{rem}

Two recent works also consider the case of signed reproduction functions. In \cite{Chen2017}, an alternative approach for
analyzing multi-dimensional Hawkes processes with self-inhibition is proposed. The intensity functions are of the form
\[
\lambda_j(t)=\phi_j\left( \mu_j +\sum_{k=1}^p \int_{0}^\infty \omega_{k,j}(u)dN_j(t-u)\right),
\quad
j=1,\dots,p\,,
\]
under a number of assumptions, in particular
that the $\phi_j$ are $\alpha_j$-Lipschitz, the Perron-Frobenius eigenvalue
of the matrix
$(\alpha_j\sum_{k} \int_{0}^\infty |\omega_{k,j}(\Delta)| d\Delta)_{j,k}$ is strictly less than $1$,
and that either the functions $\phi_j$ have a common uniform bound or the signed functions $\omega_{k,j}$
vanish outside a common bounded interval (\cite[Assumption~1, Assumption~4]{Chen2017}).
In order to derive concentration inequalities for the Hawkes processes, they apply the theory of weakly
dependent sequences and therefore develop specific coupling techniques in order to control time dependencies.
The very recent work~\cite{Raad2019} provides renewal time points for rather general one-dimensional
Hawkes processes with self-inhibition, using technical splitting methods requiring specific couplings. \\

Both papers \cite{Chen2017,Raad2019} involve sets of assumptions on the reproduction functions that differ from the ones here.
Note that~\eqref{ext-stab} is the natural stability assumption involving the growth bound at infinity
for the dominating process, and that we do not need in the present paper regularity or monotonicity assumptions on $\phi$.
In contrast~\cite{Chen2017,Raad2019} in the spirit of \cite[Theorem~1]{bremaudmassoulie}
make Lipschitz assumptions on $\phi$ and a stability assumption involving the global Lispschitz
constant of $\phi$ which hence involves its worst local modulus of continuity.
Additionally,~\cite{Chen2017} uses
the equivalent of $|h|$ instead of $h^+$ while~\cite{Raad2019} requires that $\phi$ is nondecreasing.
Moreover, the methods in~\cite{Chen2017,Raad2019} are drastically different from ours,
and require other technical assumptions which we do not need to make.


\section{Hawkes processes}

In this Section, we first provide a constructive solution of Eq.~\eqref{eq:Ng},
which yields a coupling between $N^h$ and $N^{h^+}$ satisfying $N^h\leq N^{h^+}$.
The renewal times on which are based the proofs of our main results are the instants at which
the intensity $\Lambda^h$ has returned and then stayed at $\lambda$ for a duration long enough to be sure that
the dependence on the past has vanished, which allows to write the process
in terms of i.i.d.\ excursions.
The coupling will allow us to control the renewal times for $N^h$ by the renewal times for $N^{h^+}$.

When dealing with $h^+$, we use the well-known cluster representation for a Hawkes process
with nonnegative reproduction function. This representation allows us to interpret the renewal times as times at which an $M/G/\infty$ queue is empty,
and we use this interpretation in order to obtain tail estimates for the interval between these times.

\subsection{Solving the equation for the Hawkes process}

The following result follows from an algorithmic proof which will be given in Appendix~\ref{app:proof}.
The algorithmic construction
can be used for simulations, which are shown in Fig.~\ref{fig:eds}.

\begin{prop}
\label{prop:couplage}
Let $Q$ be a $(\mathcal{F}_t)_{t\ge0}$-Poisson point process on $(0,+\infty)\times (0,+\infty)$ with
unit intensity. 
Consider Equation~\eqref{eq:Ng}, \emph{i.e.},
 \begin{equation*}
\left\{
\begin{aligned}
 &N^{h} = N^0+\int_{(0,+\infty)\times(0,+\infty)} \delta_u \ind_{\{\theta\leq \Lambda^{h}(u)\}}\,Q(du,d\theta)\,,
\\
&\Lambda^h(u) =\biggl(\lambda+\int_{(-\infty,u)} h(u-s)\,N^h(ds)\biggr)^+ \,,
 &&u >0\,,
\end{aligned}
\right.
\end{equation*}
in which $h: (0,+\infty)\to \R$ is a signed measurable reproduction function, $\lambda>0$ an immigration rate,
and $N^0$ an initial condition in $\mathcal{N}((-\infty,0])$ with law $\mathfrak{m}$.
Consider the similar equation for $N^{h^+}$ in which $h$ is replaced by $h^+$.
Assume that
\begin{equation} \label{cond_h}
\|h^+\|_1 <1
\end{equation}
and that the distribution $\mathfrak{m}$ of the initial condition $N^0$ satisfies
\begin{equation} \label{cond_N0}
\forall t>0,\quad   \int_0^t \E_{\mathfrak{m}} \Big(\int_{(-\infty,0]}h^+(u-s)\,N^0(ds) \Big)\ du < +\infty.
\end{equation}
\begin{enumerate}[\rm a)]
\item
\label{prop:couplage-i}
Then there exists a pathwise unique strong solution $N^h$ of Equation~\eqref{eq:Ng},
and this solution is a Hawkes process in the sense of Definition \ref{def:Hawkes}.
\item
\label{prop:couplage-ii}
The same holds for $N^{h^+}$, and moreover $N^h \le N^{h^+}$ a.s. (in the sense of measures).
\end{enumerate}
\end{prop}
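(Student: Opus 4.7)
The plan is to carry out a joint pathwise construction of $N^{h^+}$ and $N^h$ driven by the same Poisson point process $Q$, in such a way that $N^h$ appears as a further thinning of $N^{h^+}$. This yields existence, uniqueness and the coupling $N^h\le N^{h^+}$ in one stroke, as soon as the monotonicity inequality between the two intensities is secured.

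I would first establish part \ref{prop:couplage-ii} restricted to $N^{h^+}$. Since $h^+\ge0$, the map $N\mapsto \lambda+\int_{(-\infty,u)}h^+(u-s)\,N(ds)$ is monotone, so Picard iteration $N_0=N^0$, $N_{n+1}=N^0+\int \delta_u\ind_{\{\theta\le\Lambda_n(u)\}}\,Q(du,d\theta)$ with $\Lambda_n(u)=\lambda+\int_{(-\infty,u)}h^+(u-s)\,N_n(ds)$ produces a.s.\ a nondecreasing sequence $(N_n)$. Taking expectations and using \eqref{cond_N0}, \eqref{cond_h} and Fubini, one iterates the resulting renewal-type inequality and, since $\|h^+\|_1<1$, obtains a locally bounded upper bound for $\E[\Lambda_n(t)]$ that is uniform in $n$. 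Monotone convergence then identifies the limit $N^{h^+}$ as a locally finite solution; uniqueness follows by a pathwise induction along the atoms of $Q$.

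The key algebraic input for the coupling is the pointwise bound
\[
\Bigl(\lambda+\int_{(-\infty,u)} h(u-s)\,M(ds)\Bigr)^+ \le \lambda+\int_{(-\infty,u)} h^+(u-s)\,M'(ds)
\]
valid whenever $M\le M'$ in $\mathcal{N}(\R)$: the left-hand side is dominated by $\lambda+\int h^+(u-s)\,M(ds)$ since $h\le h^+$ and the positive part is nondecreasing, and the bound then propagates by $M\le M'$. To construct $N^h$, enumerate the atoms $T_1<T_2<\cdots$ of $N^{h^+}|_{(0,+\infty)}$ with the associated marks $\theta_k\le\Lambda^{h^+}(T_k)$ read from $Q$; set $N^h|_{(-\infty,0]}=N^0$ and, processing the $T_k$ in order, declare $T_k$ to be an atom of $N^h$ iff $\theta_k\le\Lambda^h(T_k)$, where $\Lambda^h(T_k)$ is computed from $N^h|_{(-\infty,T_k)}$ via \eqref{def:Lambda}. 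The displayed inequality, applied inductively with $M=N^h|_{(-\infty,T_k)}$ and $M'=N^{h^+}|_{(-\infty,T_k)}$, shows that $\Lambda^h(T_k)\le\Lambda^{h^+}(T_k)$, so the thinning is well posed and $N^h\le N^{h^+}$ is preserved. Atoms of $Q$ with $\theta>\Lambda^{h^+}(u)\ge\Lambda^h(u)$ are automatically discarded, so the constructed $N^h$ satisfies \eqref{eq:Ng}; uniqueness is obtained again by pathwise induction on the atoms of $Q$, any competing solution being forced to agree at each step by its own intensity formula.

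The main obstacle, to be addressed early, is the \emph{finiteness} at every stage: one must know that $\Lambda^{h^+}(u)$, hence $\Lambda^h(u)$, is a.s.\ finite and that the $T_k$ do not accumulate, so that the inductive construction terminates past any bounded horizon. This is exactly where the subcriticality $\|h^+\|_1<1$ and the integrability assumption \eqref{cond_N0} on $N^0$ are used, through the iterated convolutions of $h^+$ whose total mass is summable. Once this is in place, the verification that the constructed $N^h$ is a Hawkes process in the sense of Definition~\ref{def:Hawkes}, \emph{i.e.}\ that $\Lambda^h$ is indeed a version of its $(\mathcal{F}_t)$-conditional intensity, is a standard consequence of the fact that $N^h$ is a predictable thinning of the $(\mathcal{F}_t)$-Poisson process $Q$ of unit intensity.
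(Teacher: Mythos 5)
Your proposal is correct in substance, but it takes a genuinely different route from the paper. The paper constructs $N^h$ directly, atom by atom: it defines the successive atoms $U^h_k$ by an algorithmic thinning of $Q$, checks at each step that the integrated intensity is finite (via \eqref{cond_N0} and a Fubini argument), rules out accumulation of atoms through the uniform bound $\E_\mathfrak{m}\big(N^h(0,t\wedge U^h_k)\big)\le (1-\|h^+\|_1)^{-1}\big(\lambda t+\int_0^t\E_\mathfrak{m}(\int h^+(u-s)N^0(ds))du\big)$, and only afterwards proves $N^h\le N^{h^+}$ by a separate induction showing each $U^h_k$ is an atom of $N^{h^+}$, using the same comparison $\Lambda^h\le\Lambda^{h^+}$ that you isolate as the key algebraic input. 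You instead build the dominating process $N^{h^+}$ first, by a monotone Picard scheme (legitimate because $h^+\ge0$ makes the intensity map monotone), and then realize $N^h$ as a sequential thinning of the locally finite set of atoms of $N^{h^+}$; this delivers existence, local finiteness and the coupling of part~\ref{prop:couplage-ii}) in one stroke, with the subcritical renewal inequality $m_{n+1}(t)\le C(t)+\|h^+\|_1\,m_n(t)$ playing the role of the paper's bound. This is close in spirit to the Brémaud--Massoulié iteration that the paper contrasts itself with, and it is worth noting that your monotone version needs only \eqref{cond_N0} because you work with time-integrated expectations. Three points to tighten, all routine: under \eqref{cond_N0} the pointwise quantity $\E(\Lambda_n(t))$ need not be finite for each fixed $t$, so the uniform bound should be stated for $\int_0^t\E(\Lambda_n(u))\,du$, which is all you use; when passing to the monotone limit you should invoke the standard null-set argument guaranteeing that a.s.\ no atom of $Q$ lies exactly on the limiting curve $\theta=\Lambda^{h^+}(u)$, so that the limit is indeed a fixed point of \eqref{eq:Ng}; and ``induction on the atoms of $Q$'' for uniqueness must be read as induction on the (locally finite) successive atoms of a solution, as in the paper, since $Q$ has infinitely many atoms in every time strip --- with that reading your uniqueness argument coincides with the paper's.
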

The main novelty of this Proposition is the coupling obtained in $\ref{prop:couplage-ii})$. Let us first note that the coupling is very strong since the comparison between $N^h$ and $N^{h^+ }$ holds in the sense of measures: each atom of $N^h$ is an atom of $N^{h^+ }$. Moreover, even though couplings are easily derived for Hawkes processes associated with nonnegative reproduction functions, it is not so when the reproductive functions are signed: if $h$ and $g$ are two signed functions such that $h\le g$, then it is not always possible to couple $N^h$ and $N^g$ in such a way that atoms of $N^h$ are atoms of $N^g$ as well. However,
if $h$ is signed and $g$ is nonnegative, then our construction applies and $N^h\le N^g$,
see Appendix \ref{app:proof} for details.
We present the result above using $h$ and $h^+$ since $h^+$ is the least  positive upper bound of $h$.

\begin{rem}
In order to prove the strong existence and pathwise uniqueness of the solution of Eq.~\eqref{eq:Ng},
we propose a proof based on an algorithmic construction similar to the Poisson embedding of \cite{bremaudmassoulie},
also referred in \cite{daleyverejones2008} as thinning. Since this construction is rather classical,
we postpone the proof in Appendix \ref{app:proof}.
A similar result is also proved in these references using Picard iteration techniques,
with Assumption~\eqref{cond_N0} replaced by the stronger hypothesis that there exists $D_{\mathfrak{m}}>0$
such that
\begin{equation}
\forall t>0,\quad   \E_{\mathfrak{m}} \bigg(\int_{(-\infty,0]}|h(t-s)| \,N^0(ds) \bigg) < D_{\mathfrak{m}}\,.
\end{equation}
When $h$ is nonnegative, the result can be deduced from the cluster representation of the self-exciting Hawkes process,
since $N^h([0,t])$ is upper bounded by the sum of the sizes of a Poisson number of sub-critical Galton-Watson trees,
 see \cite{hawkesoakes,reynaudbouretroy}.
\end{rem}

\begin{rem}
Proposition~\ref{prop:couplage} does not require that $L(h)$ be finite. When $L(h)<\infty$ then
the assumption \eqref{cond_N0} can be rewritten as
\begin{equation}
\int_0^{L(h)} \E_\mathfrak{m} \bigg(\int_{(-L(h),0]}h^+(u-s) \,N^0(ds) \bigg)\ du < +\infty\,.
\label{cond_N0:cas_supph_compact}
\end{equation}
A sufficient condition for \eqref{cond_N0:cas_supph_compact} to hold is that $\E_\mathfrak{m}(N^0(-L(h),0])<+\infty$. Indeed, using the Fubini-Tonelli theorem, the l.h.s.\ of \eqref{cond_N0:cas_supph_compact} can be bounded by
$\|h^+\|_1 \,\E_\mathfrak{m}(N^0(-L(h),0])$. {Therefore, the results of Proposition \ref{prop:couplage} hold under Assumptions \ref{hyp:h}}.
\end{rem}

\begin{figure}[!ht]
\begin{center}
\begin{tabular}{cc}
\includegraphics[height=4.5cm,width=7cm]{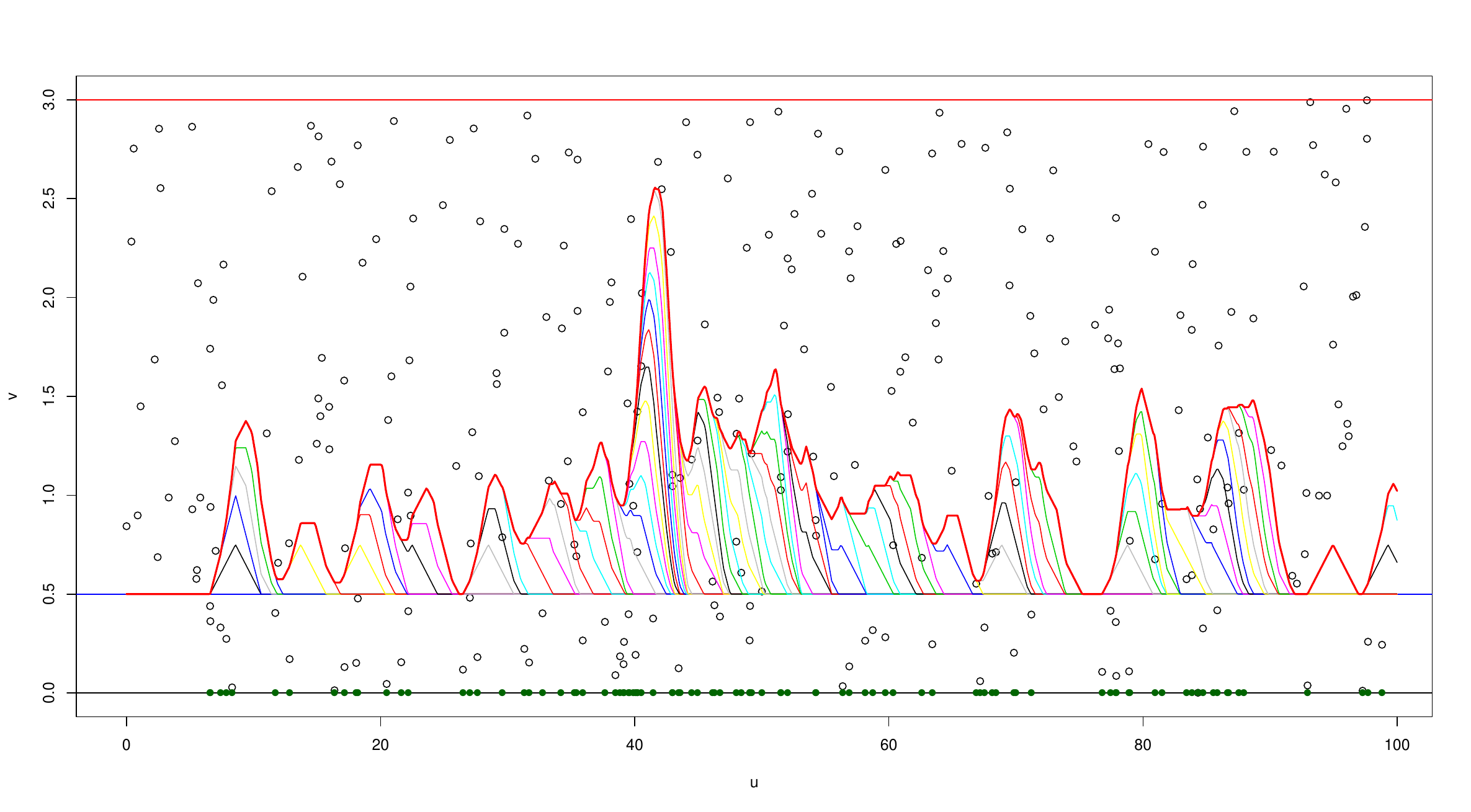} & \includegraphics[height=4.4cm,width=7cm]{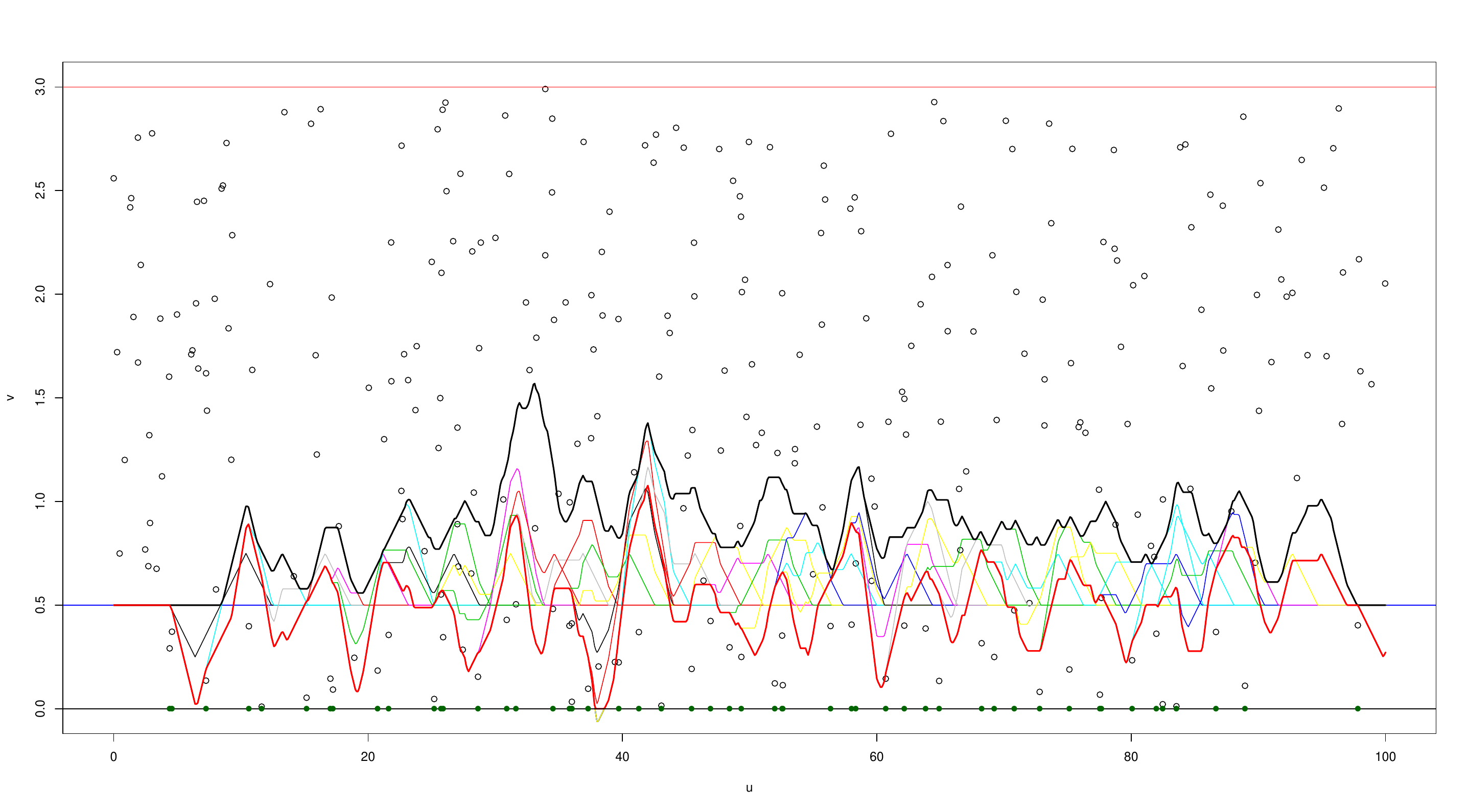}\\
(a) & (b)
\end{tabular}
\caption{
{\small \emph{(a) Hawkes process with a positive reproduction function $h$. (b) Hawkes process with a general reproduction function $h$. The dots in the plane represent the atoms of the Poisson point process $Q$ used for the construction. The atoms of the Hawkes processes are the green dots on the abscissa axis. The bold red curve corresponds to the intensity $\Lambda^h$ and the colored curves represent the partial cumulative contributions of the successive atoms of the Hawkes process. In (b), the bold blue curve  corresponds to the intensity of the dominating Hawkes process with reproduction function $h^+$.}}}\label{fig:eds}
\end{center}
\end{figure}

\subsection{The cluster representation for nonnegative reproduction functions} \label{subsec:cluster}

In this subsection, we consider the case in which  the reproduction function $h$ is nonnegative.
The intensity process of a corresponding Hawkes process can be written, for $t>0$,
\[
\Lambda^h(t) = \lambda + \int_{(-L(h),t)} h(t-u) \,N^h(du)\,.
\]

The first term can be interpreted as an immigration rate of \emph{ancestors}. Let $(V_k)_{k\ge1}$
be the corresponding sequence of arrival times, forming a Poisson  process of intensity $\lambda$.

The second term is the sum of all the contributions of the atoms of $N^h$ before time $t$ and can be seen as self-excitation. If $U$ is an atom of $N^h$, it contributes to the intensity by the addition of the function $t \mapsto h(t-U)$, hence generating new points regarded as its \emph{descendants} or \emph{offspring}. Each individual has a
\emph{lifelength} $L(h)=\sup(\supp(h))$,
 the number of its descendants  follows a Poisson distribution with mean $\|h\|_1$, and the ages at which it gives birth to them have density $h/\|h\|_1$, all this independently. This induces a Galton-Watson process in continuous time,
see \cite{hawkesoakes,reynaudbouretroy}, and Fig. \ref{fig:cluster}.

To each ancestor arrival time $V_k$ we can associate a cluster of times, composed of the times of birth of its descendants.
The condition $\|h\|_1<1$ is a necessary and sufficient condition for the corresponding Galton-Watson process to be sub-critical, which implies that the cluster sizes are finite almost surely. More precisely, if we define $H_k$ by saying that $V_k+H_k$ is the largest time of the cluster associated with $V_k$, then the $(H_k)_{k\ge1}$ are i.i.d random variables independent from the sequence $(V_k)_{k\ge1}$.

Reynaud-Bouret and Roy \cite{reynaudbouretroy} proved the following tail estimate for  $H_1$.
\begin{prop}[{\cite[Prop.~1.2]{reynaudbouretroy}}]
\label{prop:H}
Let us define
\begin{equation}
\label{def:gamma}
\gamma\triangleq\frac{\|h\|_1-\log(\|h\|_1)-1}{L(h)} >0\,.
\end{equation}
Under Assumption~\ref{hyp:h} then
\begin{equation*}
\forall x\ge0,\quad \P(H_1 >x)\le \exp(1-\|h\|_1)\,\exp (-\gamma x)\,,
\end{equation*}
which provides a lower bound for the rate of decay of the cluster length.
\end{prop}

\begin{figure}[!ht]
\begin{center}
\includegraphics[height=5cm,width=8cm]{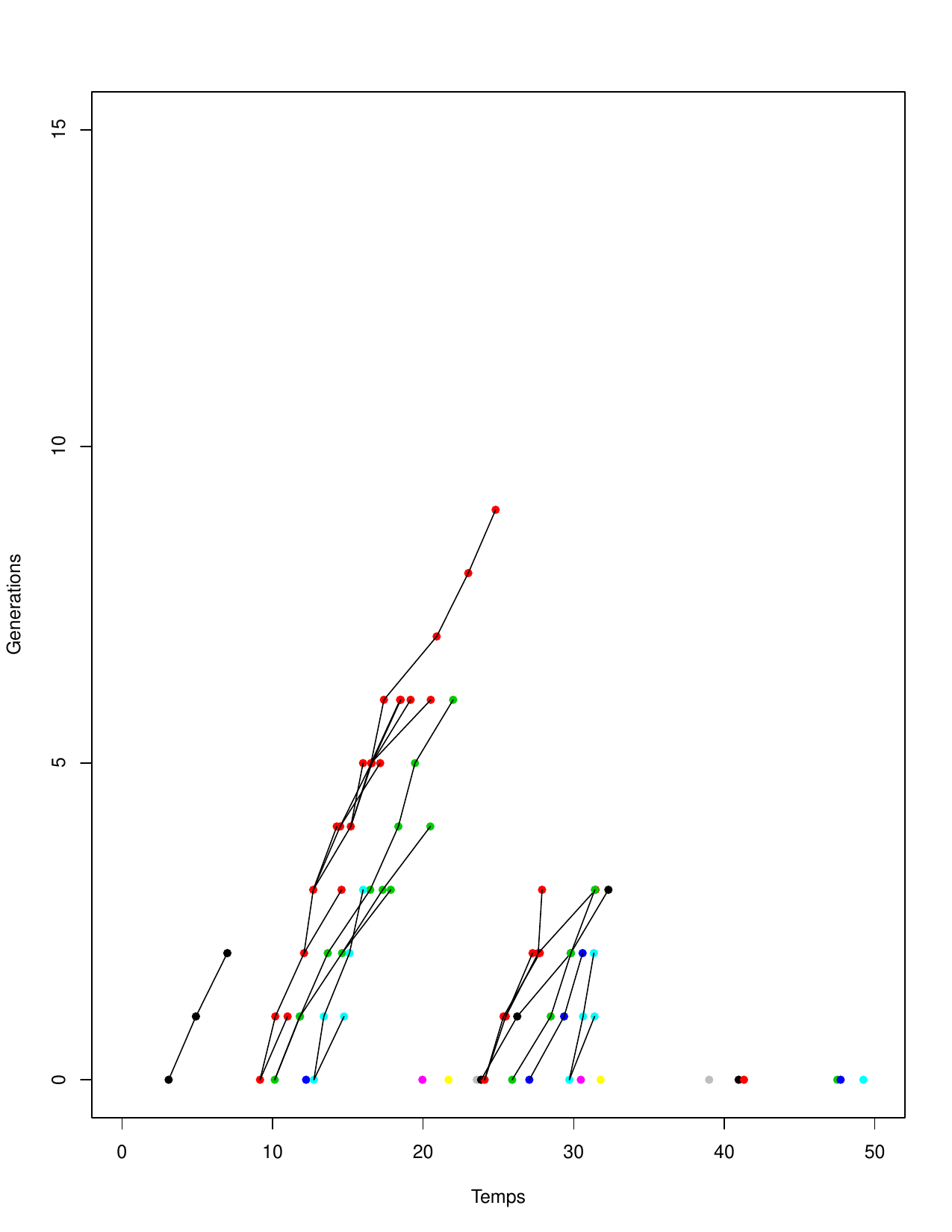}
\caption{{\small \emph{Cluster representation of a Hawkes process with positive reproduction function.
The abscissa of the dots give its atoms. Offspring are colored according to their ancestor,
and their ordinates correspond to their generation in this age-structured Galton-Watson tree.}}}\label{fig:cluster}
\end{center}
\end{figure}

When $h$ is nonnegative, it is possible to associate to the Hawkes process  a $M/G/\infty$ queue.
For  $A \ge L(h)$, we consider that the arrival times of ancestors $(V_k)_{k\ge1}$ correspond to the arrival
of customers in the queue and  associate to the $k$-th customer a service time $\widetilde{H}_k(A) \triangleq H_k+A$.
We assume that the queue is empty at time $0$, and then
the number $Y_t$ of customers in the queue at time $t\ge0$ is given by
\begin{equation}
\label{def:queue}
Y_t=\sum_{k : V_k\le t} \ind_{\{V_k+\widetilde{H}_k(A)>t\}}\,.
\end{equation}
Let $\mathcal{T}_0=0$,  and the successive hitting times of $0$ by the process $(Y_t)_{t\geq 0}$ be given by
\begin{equation} \label{def:tcalk}
\forall k\geq 1,\quad \mathcal{T}_{k}=\inf\{t\geq \mathcal{T}_{k-1},\ Y_{t-}\not=0,\ Y_t=0\}.
\end{equation}
The time interval $[V_1,\mathcal{T}_1)$ is called the first busy period, and is the first time interval during which the queue is never empty. Note that the $\mathcal{T}_{k}$ are times at which the conditional intensity of the underlying Hawkes process has returned to $\lambda$ and there is no remaining influence of its previous atoms,
since  $\widetilde{H}_k(A) \triangleq H_k+A \ge H_k+L(h)$.

Thus the Hawkes process after $\mathcal{T}_{k}$ has the same law as the Hawkes process with initial condition the null point process $\emptyset \in \mathcal{N}((-A,0])$, translated by $\mathcal{T}_{k}$. This allows us to split the random measure $N^h$ into i.i.d.\ parts. We will prove all this in the next section.

We end this part by giving  tail estimates for the $\mathcal{T}_{k}$, which  depend on $\lambda$ and on $\gamma$ given in \eqref{def:gamma}
which respectively control the exponential decays of $\P(V_1>x)$ and $\P(H_1>x)$.

\begin{prop}
\label{prop:decrR_1}
Let Assumption~\ref{hyp:h} hold, and $\gamma$ be given by~\eqref{def:gamma}.
Then
for all $x\ge 0$,
if $\lambda<\gamma$ then
$\P(\mathcal{T}_1>x) = O(\mathrm{e}^{-\lambda x})$
and if $0<\alpha <\gamma\le\lambda$ then
$
\P(\mathcal{T}_1>x)= O(\mathrm{e}^{-\alpha x})
$.
In particular, if $0<\alpha < \min(\lambda, \gamma)$ then $\E(\mathrm{e}^{\alpha\mathcal{T}_1})$ is finite.
\end{prop}

\begin{proof}[Proof of Proposition \ref{prop:decrR_1}]
The proof follows from Proposition \ref{prop:H}, from which we deduce that the service time $\widetilde{H}_1=H_1+A$ satisfies:
\begin{equation}
\label{eq:queueH1tilde}
\P(\widetilde{H}_1>x)=\P(H_1>x-A) \le \exp(-(x-A)\gamma +1-\|h\|_1) = O(\mathrm{e}^{-\gamma x})\,.
\end{equation}
We then conclude by applying Theorem \ref{thm:carl} to the queue $(Y_t)_{t\geq 0}$ defined by \eqref{def:queue}.
\end{proof}

Theorem \ref{thm:carl} in Appendix establishes the decay rates for the tail distributions of $\mathcal{T}_1$ and of the length of the busy period $[V_1,\mathcal{T}_1)$. It has an interest in itself, independently of the results for Hawkes processes considered here.

\section{An auxiliary Markov Process}

When the reproduction function $h$ has a bounded support, $N^h|_{(t,+\infty)}$ depends on $N^h|_{(-\infty,t]}$ only through $N^h|_{(t-L(h),t]}$. The process $t \mapsto N^h|_{(t-L(h),t]}$ will then be seen to be strong Markov, which yields regenerative properties for $N^h$. It is the purpose of this section to formalize that idea by introducing an auxiliary Markov process.

\subsection{Definition of a strong Markov process}
We suppose that Assumption \ref{hyp:h} holds and consider the Hawkes process $N^h$ solution of the corresponding Equation~\eqref{eq:Ng} constructed in Proposition~\ref{prop:couplage}.
We recall that $L(h)<\infty$.
Then, for any $t>0$ and $u\in (-\infty,-L(h)]$, $h(t-u)=0$, and thus
\begin{equation}
\label{def:Lambda-A}
  \Lambda^h(t)  = \biggl(\lambda+\int_{(-\infty,t)}h(t-u)\,N^h(du)\biggr)^+
=  \biggl(\lambda+\int_{(-L(h),t)} h(t-u)\,N^h(du)\biggr)^+\,.
\end{equation}
In particular $N^h|_{(0,+\infty)}$ depends only on the restriction
$N^0|_{(-L(h),0]}$ of the initial condition.

Recall that the shift operator $S_t$ is defined in \eqref{def:shift} and \eqref{def:St}. Note that if $t,s\geq 0$ then $S_{s+t}N^h=S_t S_s N^h=S_s S_tN^h$.
Let $A<\infty$ be such that $A \ge L(h)$.
Consider the $(\mathcal{F}_t)$-adapted process $X=(X_t)_{t\ge 0}$ defined by
\begin{equation}
\label{aux-proc}
X_t = (S_tN^h ) |_{(-A,0]}=N^h|_{(t-A,t]}(\cdot+t) \,,
\end{equation}
\emph{i.e.},
\begin{equation*}
\begin{array}{ccccl}
X_t & : &  \mathcal{B}((-A,0]) & \rightarrow & \R_+\\
 &  & B & \mapsto & X_t(B) = N^h|_{(t-A,t]}(B+t),
 \end{array}
\end{equation*}
 The measure $X_t$  is the point process $N^h$ in the time window $(t-A,t]$, shifted back to $(-A,0]$.
 This is a function of $N^h|_{(-A,+\infty)}$.
Using Equation \eqref{def:Lambda-A} and the remark below it, we see that the law of $N^h|_{(-A,+\infty)}$ depends on the initial condition $N^0$ only through  $N^0|_{(-A,0]}$. Therefore, with abuse of notation, when dealing with the process $(X_t)_{t \ge 0}$ we shall use the notations $\P_\mathfrak{m}$ and $\E_\mathfrak{m}$
even when $\mathfrak{m}$ is a law on $\mathcal{N}((-A,0])$,
and $\P_{\nu}$ and  $\E_{\nu}$ even when $\nu$ is an element of $\mathcal{N}((-A,0])$.

Note that $X$ depends on $A$, and that we omit this in the notation.

\begin{prop}\label{prop:XMarkov}
Let Assumption~\ref{hyp:h} hold.
Let $A<\infty$ be such that $A \ge L(h)$. Then $(X_t)_{t\geq 0}$ defined
in~\eqref{aux-proc} is a strong $(\mathcal{F}_t)_{t\ge0}$-Markov process with initial condition $X_0=N^0|_{(-A,0]}$
and sample paths in the Skorohod space $\D(\R_+,\mathcal{N}((-A,0]))$.
\end{prop}

\begin{proof}
This follows from the fact that $N^h$ is the unique solution of Eq.~\eqref{eq:Ng}.
Indeed, let $T$ be a stopping time. On $\{T<\infty\}$, by definition
\[
X_{T+t} = (S_{T+t}N^h ) |_{(-A,0]} = (S_t S_T N^h ) |_{(-A,0]}\,.
\]
Using that $N^h$ satisfies Eq.~\eqref{eq:Ng} driven by the process $Q$, we have
\begin{align*}
S_T N^h & =  S_T (N^h|_{(-\infty,T]}) +S_T (N^h|_{(T,+\infty)})\\
& =  (S_T N^h)|_{(-\infty,0]} +\int_{(T,+\infty)\times (0,+\infty)} \delta_{u-T} \ind_{\{\theta\leq \Lambda^h(u)\}}Q(du,d\theta)\\
& =  (S_T N^h)|_{(-\infty,0]} +\int_{(0,+\infty)\times (0,+\infty)} \delta_{v}\ind_{\{\theta\leq \widetilde{\Lambda}^h(v)\}} \ S_T Q(dv,d\theta),
\end{align*}
where $S_T Q$ is the (randomly) shifted process with bivariate cumulative distribution function given by
\begin{equation}
\label{shift-Q}
S_TQ((0,t]\times (0,a]) = Q((T,T+t]\times (0,a])\,,
\qquad
t,a>0,
\end{equation}
and where for $v>0$,
\begin{align*}
\widetilde{\Lambda}^h(v)& =  \Lambda^h(v+T)
=  \Big(\lambda+\int_{(-\infty,v)}h(v-s)S_T N^h(ds)\Big)^+.
\end{align*}This shows that $S_T N^h$ satisfies Eq.~\eqref{eq:Ng} driven by $S_TQ$ with initial condition $(S_T N^h)|_{(-\infty,0]}$. Since $A\ge L(h)$,  moreover $S_TN^h|_{(0,+\infty)}$ actually depends only on $(S_T N^h) |_{(-A,0]} \triangleq X_T$.

Let us now condition on $\{T<\infty\}$ and on $\mathcal{F}_T$. Since $Q$ is a $(\mathcal{F}_t)_{t\ge0}$-Poisson point process with unit intensity,
$S_TQ$ is a $(\mathcal{F}_{T+t})_{t\ge0}$-Poisson point process with unit intensity, see Lemma~\ref{lem:Q} for this classic fact.
In particular it is independent of the $\mathcal{F}_T$-measurable random variable $X_T$. Additionally, $X_T$ satisfies Assumption \eqref{cond_N0}, which becomes in this case: for all $r>0$
$$\int_0^r \int_{(-A,0]}h^+(u-s) (S_T N^h)(ds) \ du < +\infty \qquad \P_{\mathfrak{m}}\textnormal{-a.s.}$$
We have indeed that:
\begin{align}
\int_0^r \int_{(-A,0]}&h^+(u-s) (S_TN^h)(ds) du \nonumber \\
 &  = \int_0^r \int_{(-A+T,T]}h^+(T+u-s) N^h(ds) \ du \nonumber \\
 &= \int_T^{T+r} \int_{(-A+T,T]}h^+(v-s) N^h(ds) \ dv \nonumber \\
 &\le  \int_T^{T+r} \int_{(-\infty,0]}h^+(v-s) N^0(ds) \ dv + \int_T^{T+r} \int_{(0,T]}h^+(v-s) N^h(ds) \ dv \nonumber \\
 & \le \int_T^{T+r} \int_{(-\infty,0]}h^+(v-s) N^0(ds) \ dv + \|h^+\|_1 N^h(0,T] \nonumber\\
 &< +\infty\qquad \P_{\mathfrak{m}}\textnormal{-a.s.},\nonumber
\end{align}
since the distribution $\mathfrak{m}$ of $N^0$ satisfies \eqref{cond_N0}, and since we have shown at the end of the proof of Proposition \ref{prop:couplage} that $\E_{\mathfrak{m}}(N^h(0,t]) < +\infty$ for all $t >0$.

Thus the assumptions of Proposition~\ref{prop:couplage} are satisfied, which yields that $(X_{T+t} )_{t\ge0}$ is the pathwise unique, and hence weakly unique, strong solution of Eq.~\eqref{eq:Ng} started at $X_T$ and driven by the
$(\mathcal{F}_{T+t})_{t\ge0}$-Poisson point process $S_TQ$. Hence, it is a process started at $X_T$
which is a $(\mathcal{F}_{T+t})_{t\ge0}$-Markov process with same transition semi-group as $(X_t)_{t\geq 0}$.
If we wish to be more specific, for every bounded Borel function $F$ on $\D(\R_+,\mathcal{N}((-A,0]))$
we set
\[
\Pi F(x) \triangleq \E_x(F((X_t)_{t\geq 0}))
\]
and note that  existence and uniqueness in law for~\eqref{eq:Ng} yield that
\[
\E_x(F((X_t)_{t\geq 0}) \,|\, T<\infty, \mathcal{F}_T) = \Pi F(X_T)\,.
\]
This is the strong Markov property we were set to prove.
\end{proof}
\color{black}

\subsection{Renewal of $X$ at $\emptyset$}

Using $(X_t)_{t\geq 0}$ and Proposition~\ref{prop:XMarkov}, we obtain that if $T$ is a stopping time such that $N^h|_{(T-A,T]}=\emptyset$, then $N^h|_{(T,+\infty)}$ is independent of $N^h|_{(-\infty,T]}$ and behaves as $N^h$ started from $\emptyset$ and translated by $T$. Such renewal times lead to an interesting decomposition of $N^h$, enlightening its dependence structure.

The successive hitting times of $\emptyset \in \mathcal{N}((-A,0])$ for the Markov process $X$ are such renewal times.
This subsection is devoted to the study of their properties.
Recall that we have introduced in \eqref{def:tau} the first hitting time of $\emptyset \in \mathcal{N}((-A,0])$
for $X$, given by
\[
\tau \triangleq \inf\{t>0 : X_{t-}\neq \emptyset, X_{t} =\emptyset\}=\inf\{t>0 : N^h[t-A,t)\neq 0, N^h(t-A,t] =0\}\,.
\]
It depends on $A$, but this  is omitted in the notation. It is natural to study whether $\tau$ is finite or not. When
the reproduction function $h $ is nonnegative, we introduce the queue $(Y_t)_{t \ge 0}$ defined by \eqref{def:queue},
 and its return time to zero $\mathcal{T}_1$ defined in \eqref{def:tcalk}.
 The following result will yield  the finiteness of $\tau$.


\begin{lem}
\label{lem:egalite_temps}
Let Assumption \ref{hyp:h} hold.
Let $A<\infty$ be such that $A \ge L(h)$. Let
$\tau$ and $\mathcal{T}_1$ be defined in \eqref{def:tau} and \eqref{def:tcalk}.
If $h$ is nonnegative then $\P_{\emptyset}(\tau  = \mathcal{T}_1)=1$.
\end{lem}

\begin{proof}
We use the notations defined in Section \ref{subsec:cluster}. To begin with, we remark that $\tau >V_1$.
First, let us consider $t$ such that $V_1<t< \mathcal{T}_1$. By definition, there exists $i\ge1$, such that
$$V_i\le t\le V_i+\widetilde{H}_i(A) =V_i+H_i+A.$$
Since the interval $[V_i, V_i+H_i]$ corresponds to the cluster of descendants of $V_i$, there exists a sequence of points of $N^h$ in $[V_i, V_i+H_i]$ which are distant by less than $L(h)$ and thus less than $A$. Therefore, if $t\in[V_i, V_i+H_i]$, then $N^h(t-A,t]>0$

If $t\in[ V_i+H_i, V_i+H_i+A]$, then $N^h(t-A,t]>0$ as well since $V_i+H_i \in N^h$ (it is the last birth time in the Galton-Watson tree stemming from $V_i$, by definition of $H_i$).
Since this reasoning holds for any $t\le \mathcal{T}_1$, thus $\tau \ge\mathcal{T}_1$.

Conversely, for any $t \in [V_1,\tau)$, by definition of $\tau$ necessarily $N^h(t-A,t]>0$. Thus there exists an atom of $N^h$ in $(t-A,t]$, and from the cluster representation, there exists $i \ge 1$ such that this atom belongs to the cluster of $V_i$, hence to $[V_i,V_i+H_i]$. We easily deduce that
$$V_i\le t\le V_i+H_i+A$$ and thus $Y_t\ge1$, for all $t  \in [V_1,\tau)$. This proves that $\tau\le \mathcal{T}_1$ and concludes the proof.
\end{proof}

To extend the result of finiteness of $\tau$ when no assumption is made on the sign of $h$, we use the coupling between $N^h$ and $N^{h^+}$ stated in Proposition \ref{prop:couplage},  \ref{prop:couplage-ii}).

\begin{prop}\label{prop:tauleqtau^+}
Let Assumption \ref{hyp:h} hold.
Let $A<\infty$ be such that $A \ge L(h)$.
Let $\tau$ be defined in \eqref{def:tau}, and  $\tau^+$ be defined similarly with $h^+$ instead of $h$. Then
$\P_{\mathfrak{m}}(\tau \le \tau^+)=1$.
\end{prop}

\begin{proof}
We use the coupling $(N^h,N^{h^+})$ of  Proposition~\ref{prop:couplage},~\ref{prop:couplage-ii}), which satisfies
$N^h \le N^{h^+}$.
If $\tau=+\infty$, since the immigration rate $\lambda$ is positive, for {any} $t\ge0$ necessarily $N^h(t-A,t]>0$ and thus $N^{h^+}(t-A,t]>0$, which implies that $\tau^+=+\infty$ also, a.s.

Now, it is enough to prove that $\tau\leq \tau^+$ when both times are finite. In this case, since $N^{h^+}$ is locally finite a.s., $\tau^+-A$ is an atom of $N^{h^+}$ such that $N^{h^+}(\tau^+-A,\tau^+]=0$. This implies that $N^{h}(\tau^+-A,\tau^+]=0$.
If $\tau^+-A $ is also an atom of $N^h$, then $\tau\leq \tau^+$.

Else, first prove that $N^h(-A,\tau^+-A)>0$. The result is obviously true if $N^0\not= \emptyset$. When $N^0=\emptyset$, the first atoms of $N^h$ and $N^{h^+}$ coincide because $\Lambda^h_0=\Lambda_0^{h^+}$, where these functions are defined in \eqref{def:Lambda0}. This first atom is necessarily before $\tau^+-A$, and hence $N^h(-A,\tau^+-A)>0$.
The last atom $U$ of $N^h$ before $\tau^+-A$ is thus well defined, and necessarily satisfies $N^h(U,U+A]=0$ and $N^h [U, U+A)\neq 0$ so that
$\tau\leq U+A\leq \tau^+$.  We have thus proved that $\tau\leq \tau^+$,  $\P_{\mathfrak{m}}$-a.s.,
as needed.
\end{proof}

We now prove that the regeneration time $\tau$ admits an exponential moment which ensures that it is finite almost surely. The results will rely on the coupling between $N^h$ and $N^{h^+}$ and on the results obtained in Section \ref{subsec:cluster}. Let us define
\begin{equation*}
\gamma^+\triangleq\frac{\|h^+\|_1-\log(\|h^+\|_1)-1}{L(h^+)}>0\,.
\end{equation*}

\begin{prop}\label{prop:tau}
Let Assumption \ref{hyp:h} hold.
Let $A<\infty$ be such that $A \ge L(h)$, and assume that $\E_\mathfrak{m}(N^0(-A,0])<+\infty$.
Then $\tau$ given by \eqref{def:tau} satisfies
\[
\forall \alpha < \min(\lambda,{\gamma^+})\,,
\quad
\E_{\mathfrak{m}}(\mathrm{e}^{\alpha \tau}) < +\infty\,.
\]
In particular $\tau$ is finite, $\P_{\mathfrak{m}}$-a.s., and  $\E_{\mathfrak{m}}(\tau) < +\infty$.
\end{prop}

\begin{proof}
Using Proposition \ref{prop:tauleqtau^+}, it is sufficient to prove this for $\tau^+$.
When $\mathfrak{m}$ is the Dirac measure at $\emptyset$, the result is a direct consequence of
 Lemma \ref{lem:egalite_temps} and Proposition \ref{prop:decrR_1}.
 We now turn to the case when $\mathfrak{m}$ is different from $\delta_{\emptyset}$.
The proof is separated in three steps.

\paragraph{Step 1: Analysis of the problem.}
 To control $\tau^+$, we distinguish the points of $N^h$ coming from the initial condition from the points coming from ancestors arrived after zero. We thus introduce $K= N^0((-A,0])$, the number  of atoms of $N^0$, $(V_i^0)_{1\le i \le K}$, these atoms, and $(\widetilde{H}_i^0(A))_{1\le i \le K}$ the durations such that $V_i^0 + \widetilde{H}_i^0(A)-A$ is the time of birth of the last descendant of $V_i^0$. Note that $V_i^0$ has no offspring before time~$0$, so that the reproduction function of $V_i^0$ is a truncation of $h$. We finally define the time when the influence of the past before $0$ has vanished,
 given by
 $$E=\max_{1 \le i \le K}(V_i^0+\widetilde{H}_i^0(A)),$$
 with the convention that $E=0$ if $K=0$. If $K>0$, since $V_i^0\in (-A,0]$ and $\widetilde{H}_i^0(A)\geq A$, we have $E>0$. Note that $\tau^+ \ge E$. \\
 We now consider the sequence $(V_i)_{i \ge 1}$ of ancestors arriving after time $0$ at rate $\lambda$.  We recall that they can be viewed as the arrival of customers in a $M/G/\infty$ queue with time service of law that of $\widetilde{H}_1(A)$. In our case, the queue may not be empty at time $0$, when $E>0$. In that case, the queue returns to $0$ when all the customers arrived before time $0$ have left the system (which is the case at time $E$) and when all the busy periods containing the customers arrived at time between $0$ and $E$ are over. The first hitting time of $0$ for the queue is thus equal to
 \begin{equation}\label{reecriture:tau+}
 \tau^+ =  \left\{ \begin{array}{ccl}
 E & \mbox{ if } & Y_E=0\,,
 \\
 \inf\{t \ge E :  Y_t=0 \} & \mbox{ if } & Y_E>0\,,
 \end{array}\right.
 \end{equation}
where $Y_t$ is given in \eqref{def:queue} by $Y_t=\sum_{k : 0\leq V_k\le t} \ind_{\{V_k+\widetilde{H}_k(A)>t\}}.$

\paragraph{Step 2: Exponential moments of $E$.}
In \eqref{reecriture:tau+}, $E$ depends only on $N^0$ and $(Y_t)_{t\geq 0}$ only on the arrivals and service times of customers entering the queue after time $0$. A natural idea is then to condition with respect to $E$, and for this it is important to gather estimates on the moments of $E$.
Since $V_i^0\leq 0$, we have that
$$0\leq E\leq \max_{1\leq i\leq K} \widetilde{H}^0_i(A).$$
The truncation mentioned in Step 1 implies that the $\widetilde{H}^0_i(A)$ are stochastically dominated by independent random variables distributed as $\widetilde{H}_1$, which we denote by $\bar{H}^0_i(A)$. Thus for $t>0$,
using \eqref{eq:queueH1tilde},
\begin{align*}
\P_{\mathfrak{m}}(E>t)
&\leq \P_{\mathfrak{m}}\big(\max_{1\leq i\leq K} \bar{H}^0_i(A)>t\big)\\
&=  1-\E_{\mathfrak{m}}\Big(\big(1-\P(\widetilde{H}_1(A)> t)\big)^K\Big)\\
&\leq  1 -\E_{\mathfrak{m}}\big((1-C \mathrm{e}^{-{\gamma^+} t})^K\big)\,.
\end{align*}
Thus there exists $t_0>0$ such that for any $t>t_0$,
\begin{align}
\P_{\mathfrak{m}}(E>t) \leq  C \E_\mathfrak{m}(N^0(-A,0]) \mathrm{e}^{-{\gamma^+}t}.\label{eq:controleE}
\end{align}As a corollary, we have for any $\beta \in (0,{\gamma^+})$ that
\begin{equation}\label{eq:momentexpE}
\E_\mathfrak{m}\big(\mathrm{e}^{\beta E}\big)<+\infty\,.
\end{equation}

\paragraph{Step 3: Estimate of the tail distribution of $\tau^+$.}
For $t>0$, we have
\begin{align*}
\P_{\mathfrak{m}}(\tau^+>t)& = \P_{\mathfrak{m}}\big(\tau^+>t,\,E> t\big)+\P_{\mathfrak{m}}\big(\tau^+>t,\,E\leq t\big)\\
& \leq\P_{\mathfrak{m}}(E> t)+\E_{\mathfrak{m}}\Big(\ind_{\{E\leq t\}}\, \P_{\mathfrak{m}}\big(\tau^+>t \,|\, E\big)\Big).
\end{align*}The first term is controlled by \eqref{eq:controleE}. For the second term, we use Proposition \ref{prop:appendice-suitecarl} which is a consequence of Theorem \ref{thm:carl}. For this, let us introduce a constant $\kappa$ such that $\kappa <{\gamma^+}$ if ${\gamma^+} \leq \lambda$ and $\kappa=\lambda$ if $\lambda<{\gamma^+}$. We have:
\begin{equation*}
\E_{\mathfrak{m}}\Big(\ind_{\{E\leq t\}} \, \P\big(\tau^+>t \,|\, E\big)\Big) \leq \E_{\mathfrak{m}}\big(\ind_{\{E\leq t\}} \, \lambda  C E\,\mathrm{e}^{-\kappa(t-E)}\big)=  \lambda  C \mathrm{e}^{-\kappa t} \E_\mathfrak{m}\big(\ind_{\{E\leq t\}} \, E\,\mathrm{e}^{\kappa E}\big).
\end{equation*}
Since $\kappa<{\gamma^+}$, it is always possible to choose $\beta\in (\kappa,{\gamma^+})$ such that \eqref{eq:momentexpE} holds, which entails that
$\E_\mathfrak{m}\big(\ind_{\{E\leq t\}} \,E\,\mathrm{e}^{\kappa E}\big)$ can be bounded by a finite constant independent of $t$.

Gathering all the results,
\begin{align*}
\P_{\mathfrak{m}}(\tau^+>t) & \leq C \E_\mathfrak{m}(N^0(-A,0]) \mathrm{e}^{-{\gamma^+}t}+ \lambda C' \mathrm{e}^{-\kappa t}=O\big(\mathrm{e}^{-\kappa t}\big).
\end{align*}This yields that $\E_\mathfrak{m}(\mathrm{e}^{\alpha \tau^+})<+\infty$ for any $\alpha<\kappa$, i.e. $\alpha<\min (\lambda,{\gamma^+})$.
 \end{proof}

Note that if Assumption \ref{hyp:h} holds then
$\tau$ given by~\eqref{def:tau} satisfies $\E_\emptyset(\tau)<\infty$
and hence the null measure $\emptyset$ is a positive recurrent state for the strong Markov process $X=(X_t)_{t\ge 0}$.

\begin{thm}
\label{thm:exist-uniq-inv-law}
Let Assumption \ref{hyp:h} hold.
The strong Markov process $X=(X_t)_{t\ge 0}$  with values in $\mathcal{N}((-A,0])$
defined by \eqref{aux-proc} admits a unique  invariant law $\pi_A$  defined as in~\eqref{def:piA}, \emph{i.e.},
for every Borel nonnegative function $f$ on $\mathcal{N}((-A,0])$,
\[
\pi_A f  = \frac1{\E_\emptyset(\tau)} \E_\emptyset\biggl(\int_0^{\tau} f(X_t) \,dt\biggr)\,.
\]
Moreover  $\pi_A\{\emptyset\} = 1/(\lambda \E_\emptyset(\tau))$.
\end{thm}

\begin{proof}
These facts are classic in the presence of the positive recurrent state $\emptyset$ which is reachable from all states.
\end{proof}

The strong Markov property of $X$  yields a  sequence of regeneration times $(\tau_k)_{k\geq 0}$,
which are the successive visits of $X$ to the positive recurrent state $\emptyset$,
defined as follows (the time $\tau_0$ has already been introduced in \eqref{def:tau0}):
\begin{align*}
\tau_0 &=  \inf\{t\ge 0 : X_t =\emptyset\}\,,
&&\text{(First entrance time of $\emptyset$)}
\\
\tau_k &= \inf\{t>\tau_{k-1} : X_{t-}\neq \emptyset, X_{t} =\emptyset\}\,,
\quad k\ge1\,.
&&\text{(Successive return times at $\emptyset$)}
\end{align*}
They provide a useful decomposition of the path of $X$ in i.i.d.\ excursions.

\begin{thm}
\label{thm:reg-seq}
Let $N^h$ be a Hawkes process satisfying Assumption~\ref{hyp:h}, and  $A \ge L(h)$.
Consider the Markov process $X $ defined in \eqref{aux-proc}. Under $\P_\mathfrak{m}$ the following holds:
\begin{enumerate}[\rm a)]
\item
\label{tau-k-finite}
The $\tau_k$ for $k\ge0$ are finite stopping times, a.s.
\item
\label{delay-ind-cycle}
The delay $(X_{t})_{ t \in [0,\tau_0) }$ is independent of the cycles $(X_{\tau_{k-1} + t})_{ t \in [0, \tau_k- \tau_{k-1}) }$ for $k\ge1$.
\item
\label{cycles-iid}
These cycles are i.i.d.\ and distributed as $(X_t)_{t \in[0, \tau)}$ under $\P_\emptyset$.
In particular their durations $(\tau_k-\tau_{k-1})_{k\geq 1}$ are distributed as $\tau$ under $\P_\emptyset$,
and $\lim_{k\rightarrow +\infty}\tau_k=+\infty$, $\P_\mathfrak{m}$-a.s.
\end{enumerate}

\end{thm}

\begin{proof}
The above items follow classically from the strong Markov property of $X$.
Let us first prove the finiteness of the return times $\tau_k$. For any $\mathfrak{m}$, from the definition of $\tau_0$ and $\tau$, we have that $\tau_0\leq \tau$, $\P_\mathfrak{m}$-a.s. Then, $\P_\mathfrak{m}(\tau_0<+\infty)=1$ follows from Proposition \ref{prop:tau} (i).
For $k\geq 1$, using the strong Markov property of $X$, we have for any $\mathfrak{m}$:
\begin{align*}
\P_\mathfrak{m}(\tau_k<+\infty)
&=   \E_\mathfrak{m} \big( \ind_{\{\tau_{k-1}<+\infty\}} \, \P_{X_{\tau_{k-1}}}(\tau<+\infty)\big)\\
 &=  \E_\mathfrak{m}\big(\ind_{\{\tau_{k-1}<+\infty\}} \,\P_\emptyset(\tau<+\infty)\big)\\
& =  \P_\mathfrak{m}(\tau_{k-1}<+\infty)=\cdots = \P_{\mathfrak{m}}(\tau_0<+\infty)=1.
\end{align*}

Let us now prove \ref{delay-ind-cycle}) and \ref{cycles-iid}).
It is sufficient to consider $(X_t)_{t\in [0,\tau_0)}$, $(X_{\tau_0+t})_{t\in [0,\tau_1-\tau_0)}$ and $(X_{\tau_1+t})_{t\in [0,\tau_2-\tau_1)}$. Let $F_0$, $F_1$, and $F_2$ be three measurable bounded real functions on $\D(\R_+,\mathcal{N}(-A,0])$. Then, using the strong Markov property successively at $\tau_1$ and $\tau_0$, we obtain:
\begin{align*}
&\E_{\mathfrak{m}} \Big(F_0\big((X_t)_{t\in [0,\tau_0)}\big)\, F_1\big((X_{\tau_0+t})_{t\in [0,\tau_1-\tau_0)}\big) \,F_2\big((X_{\tau_1+t})_{t\in [0,\tau_2-\tau_1)}\big)\Big)
\\
&\quad = \E_{\mathfrak{m}} \Big(F_0\big((X_t)_{t\in [0,\tau_0)}\big)\Big) \,\E_{\emptyset}\Big(F_1\big((X_{t})_{t\in [0,\tau)}\big)\Big) \,\E_\emptyset\Big( F_2\big((X_{t})_{t\in [0,\tau)}\big)\Big).
\end{align*}
This concludes the proof.
\end{proof}

\section{Proof of the main results}\label{sec:proofs}

We translate the statements of the main results in terms of the Markov process $X$.
Let $T>0$ be fixed and, since the sequence $(\tau_k)_{k\geq 0}$ increases to infinity,
\begin{equation}
\label{def:kt}
 K_T\triangleq \max\{k\ge0 : \tau_k\le T\} \xrightarrow[T\to\infty]{\P_\mathfrak{m}-\text{a.s.}} \infty\,.
\end{equation}
For a locally bounded Borel function $f$ on $\mathcal{N}((-A,0])$ we define the random variables
\begin{equation}
\label{integrals}
I_k f \triangleq \int_{\tau_{k-1}}^{\tau_k} f (X_t)\,dt\,,
\quad
k\ge1\,,
\end{equation}
which are finite a.s., i.i.d., and of the same law as $\int_{0}^{\tau} f (X_t)\,dt$ under $\P_\emptyset$,
see Theorem~\ref{thm:reg-seq}.

\subsection*{Proof of Theorem~\ref{thm:point-erg+laws}~\ref{lln})}

This classic proof assumes first that $f\ge0$. Then using~\eqref{def:kt} and~\eqref{integrals},
\begin{equation*}
\frac1{K_T}\sum_{k=1}^{K_T} I_k f
\le \frac1{K_T}\int_0^T f(X_t)\,dt
\le
\frac1{K_T}\int_0^{\tau_0} f(X_t)\,dt + \frac1{K_T}\sum_{k=1}^{K_T+1} I_kf
\end{equation*}
and the strong law of large numbers applied to the i.i.d.~$I_k f$ yields that
\begin{equation*}
\frac1{K_T}\int_0^T f(X_t)\,dt \xrightarrow[T\to\infty]{\P_\mathfrak{m}-\text{a.s.}}
\E_\emptyset\biggl(\int_0^{\tau} f(X_t) \,dt\biggr) \triangleq \E_\emptyset(\tau) \,\pi_A f ~.
\end{equation*}
Choosing $f=1$ yields that
\begin{equation}
\label{t-et-kt}
\frac{T}{K_T} \xrightarrow[T\to\infty]{\P_\mathfrak{m}-\textnormal{a.s.}} \E_\emptyset(\tau) <\infty
\end{equation}
and dividing the first limit by the second concludes the proof for $f\ge0$.
The case of $\pi_A$-integrable signed $f$ follows using the decomposition $f=f^+ - f^-$.

%

\subsection*{Proof of Theorem~\ref{thm:point-erg+laws}~\ref{cv-to-equ})}

This follows from a general result in Thorisson~\cite[Theorem~10.3.3~p.351]{Thorisson2000},
which yields that if the distribution of $\tau$ under $\P_\emptyset$ has a density with respect to the Lebesgue measure
and if $\E_\emptyset(\tau)<+\infty$, then there exists a probability measure $\mathbb{Q}$ on $\D(\R_+,\mathcal{N}(-A,0])$ such that,
for any initial law $\mathfrak{m}$,
\begin{equation*}
\P_\mathfrak{m}\bigl((X_{t+u})_{u\ge0}\in \cdot \bigr)
\xrightarrow[t\to\infty]{\textnormal{total variation}}
\mathbb{Q}\,.
\end{equation*}
Since $\pi_A$ is an invariant law,
$ \P_{\pi_A}\bigl((X_{t+u})_{u\ge0}\in \cdot \bigr) = \P_{\pi_A}(X\in \cdot) $ for every $t\geq 0$. Hence, taking $\mathfrak{m}=\pi_A$ in the above convergence yields that $\mathbb{Q}=\P_{\pi_A}(X\in \cdot)$.

It remains to check the above assumptions of the theorem.
Proposition \ref{prop:tau} yields that  $\E_\emptyset(\tau)<+\infty$.
Moreover, under $\P_{\emptyset}$ we can rewrite $\tau$ as
$$\tau=U_1^h+\inf\big\{t>0 : \ X_{(t+U_1^h)_-}\not= \emptyset \mbox{ and } X_{t+U_1^h}= \emptyset\big\}.$$
Using the strong Markov property, we easily prove independence of the two terms in the r.h.s.
Since $U_1^h$ has an exponential distribution under $\P_{\emptyset}$, $\tau$ has a density under $\P_\emptyset$.

\subsection*{Proof of Theorem~\ref{thm:clt}}

Let $\tilde{f} \triangleq f -\pi_A f$,
so that $\frac1T \int_0^T  \tilde{f} (X_t)\,dt = \frac1T \int_0^T  f(X_t)\,dt  - \pi_A f $.
With the notation~\eqref{def:kt} and~\eqref{integrals},
we have the decomposition
\begin{equation}\label{eq:decompo}
\int_0^T \tilde{f}(X_t)\,dt
= \int_0^{\tau_0} \tilde{f}(X_t)\,dt
+ \sum_{k=1}^{K_T} I_k \tilde{f} + \int_{\tau_{K_T}}^T \tilde{f}(X_t)\,dt\,.
\end{equation}
The $I_k \tilde{f}$ are i.i.d. and
are distributed as $\int_{0}^{\tau} \tilde{f} (X_t)\,dt$ under $\P_\emptyset$,
with expectation~$0$ and variance  $\E_\emptyset(\tau) \sigma^2(f)$,
see Theorem~\ref{thm:reg-seq}.
Since $f$ is locally bounded, so is $\tilde{f}$ and
\[
\frac1{\sqrt{T}} \int_0^{\tau_0} \tilde{f}(X_t)\,dt  \xrightarrow[T\to\infty]{\P_\mathfrak{m}-\text{a.s.}} 0\,.
\]
Now, let $\eps >0$. For arbitrary $a>0$ and $0<u\le T$,
\[
\P_\mathfrak{m}\biggl(\biggl|\int_{\tau_{K_T}}^T \tilde{f}(X_t)\,dt \biggr| > a\biggr)
\le
\P_\mathfrak{m}(T-\tau_{K_T}> u) +
\P_\mathfrak{m}\biggl(\sup_{0\le s \le u}\biggl|\int_{T-s}^T \tilde{f}(X_t)\,dt \biggr| > a\biggr)\,.
\]
But
\[
\P_\mathfrak{m}(T-\tau_{K_T}> u)= 1 - \P_\mathfrak{m}(\exists t \in [T-u,T] : X_{t-}\neq \emptyset, X_t=\emptyset)
\]
and Theorem~\ref{thm:point-erg+laws}~\ref{cv-to-equ}) yields that
\[
\lim_{T\to\infty}\P_\mathfrak{m}(T-\tau_{K_T}> u)
= 1-\P_{\pi_A}(\exists t \in [0,u] : X_{t-}\neq \emptyset, X_t=\emptyset) ~,
\]
so that  there exists $u_0$ large enough  such that
\[
\lim_{T\to\infty} \P_\mathfrak{m}(T-\tau_{K_T}> u_0) <\frac{\eps}{2}\,.
\]
Moreover Theorem~\ref{thm:point-erg+laws}~\ref{cv-to-equ}) yields that
\[
\lim_{T\to\infty}\P_\mathfrak{m}\biggl(\sup_{0\le s \le u_0}\biggl|\int_{T-s}^T \tilde{f}(X_t)\,dt \biggr| > a\biggr)
=\P_{\pi_A}\biggl(\sup_{0\le s \le u_0}\biggl|\int_0^s \tilde{f}(X_t)\,dt \biggr| > a\biggr)
\]
and thus there exists $a_0$ large enough such that
\[
\lim_{T\to\infty}\P_\mathfrak{m}\biggl(\sup_{0\le s \le u_0}\biggl|\int_{T-s}^T \tilde{f}(X_t)\,dt \biggr| > a_0\biggr) <\frac{\eps}{2}
\]
and hence
\[
\limsup_{T\to\infty} \P_\mathfrak{m}\biggl(\biggl|\int_{\tau_{K_T}}^T \tilde{f}(X_t)\,dt \biggr| > a_0\biggr) < \eps\,.
\]
This implies in particular that
\[
\frac1{\sqrt{T}} \int_{\tau_{K_T}}^T  \tilde{f}(X_t)\,dt \xrightarrow[T\to\infty]{\text{probab.}} 0\,.
\]
It now remains to treat the second term in the r.h.s. of \eqref{eq:decompo}. The classic central limit theorem yields that
\[
\frac1{\sqrt{T}} \sum_{k=1}^{\lfloor T/\E_\emptyset(\tau)\rfloor} I_k\tilde{f}
\xrightarrow[T\to\infty]{\textnormal{in law}}
\frac1{\sqrt{\E_\emptyset(\tau)}} \mathcal{N}(0, \E_\emptyset(\tau)\sigma^2(f)) = \mathcal{N}(0, \sigma^2(f))
\]
and we are left to control
\[
\Delta_T \triangleq
\frac1{\sqrt{T}}\sum_{k=1}^{K_T} I_k\tilde{f}
- \frac1{\sqrt{T}}\sum_{k=1}^{\lfloor T/\E_\emptyset(\tau)\rfloor} I_k\tilde{f}\,.
\]
Let $\eps>0$ and
\[
v(T,\eps) \triangleq \{\lfloor (1 -\eps^3)T/\E_\emptyset(\tau)\rfloor, \dots,
\lfloor (1 +\eps^3)T/\E_\emptyset(\tau)\rfloor\}\,.
\]
Note that $(1-\eps^3)T/\E_\emptyset(\tau)<T/\E_\emptyset(\tau)<(1+\eps^3)T/\E_\emptyset(\tau)$
and hence that
$\lfloor T/\E_\emptyset(\tau)\rfloor$ belongs to $v(T,\eps) $.
In view of~\eqref{t-et-kt}, there exists $t_\eps$ such that if $T\ge t_\eps$
\[
\P_\mathfrak{m}(K_T \in v(T,\eps)) >1-\eps\,.
\]
For $T\ge t_\eps$ we thus have on $\{K_T \in v(T,\eps)\}$ that
\begin{align*}
|\Delta_T |
& \leq  \Biggl| \frac{1}{\sqrt{T}} \sum_{k=\lfloor (1 -\eps^3)T/\E_\emptyset(\tau)\rfloor}^{K_T} I_k\tilde{f} \Biggr|
+ \Biggl| \frac{1}{\sqrt{T}}
\sum_{k=\lfloor (1 -\eps^3)T/\E_\emptyset(\tau)\rfloor}^{\lfloor T/\E_\emptyset(\tau)\rfloor} I_k\tilde{f} \Biggr|
\\
& \leq    \frac{2}{\sqrt{T}}
\max_{n\in v(T,\eps)}
\Biggl|
\sum_{k=\lfloor (1 -\eps^3)T/\E_\emptyset(\tau)\rfloor}^n I_k\tilde{f}
\Biggr|\, .
\end{align*}
Using now Kolmogorov's maximal inequality~\cite[Sect.~IX.7 p.234]{Feller1968} we obtain that
\[
\P_\mathfrak{m}(|\Delta_T | \ge \eps)
\le \frac{\lfloor (1 +\eps^3)T/\E_\emptyset(\tau)\rfloor - \lfloor (1 -\eps^3)T/\E_\emptyset(\tau)\rfloor}{\eps^2 T/4} \E_\emptyset(\tau) \sigma^2(f) \le 8\sigma^2(f) \eps \,.
\]
Since $\eps >0$ is arbitrary, we conclude that
\[
\Biggl| \frac1{\sqrt{T}}\sum_{k=1}^{K_T} I_k\tilde{f}
- \frac1{\sqrt{T}}\sum_{k=1}^{\lfloor T/\E_\emptyset(\tau)\rfloor} I_k\tilde{f}
\Biggr| \xrightarrow[T\to\infty]{\text{probab.}} 0\,.
\]
These three convergence results and Slutsky's theorem yield the convergence result.

\subsection*{Proof of Theorem~\ref{thm:non-asympt-exp-bd}}

With the notations $\tilde{f} \triangleq f -\pi_A f$,
so that $\frac1T \int_0^T  \tilde{f} (X_t)\,dt = \frac1T \int_0^T  f(X_t)\,dt  - \pi_A f $,
and~\eqref{integrals}, let us consider the decomposition
\begin{align}
\label{sum-conc-ineq}
\int_0^T \tilde{f} (X_t)\,dt
=
\int_0^{\tau_0}  \tilde{f}(X_t)\,dt
+ \sum_{k=1}^{\lfloor T/\E_\emptyset(\tau)\rfloor} I_k \tilde{f}
+ \int_{\tau_{\lfloor T/\E_\emptyset(\tau)\rfloor}}^T  \tilde{f}(X_t)\,dt \,.
\end{align}
The $I_k \tilde{f}$ are i.i.d.~and distributed as $\int_{0}^{\tau} \tilde{f} (X_t)\,dt$ under $\P_\emptyset$,
with expectation~$0$ and variance  $\E_\emptyset(\tau) \sigma^2(f)$,
see Theorem~\ref{thm:reg-seq}.
Since $f$ takes its values in $[a,b]$,
\[
\biggl|
\int_0^{\tau_0} \tilde{f} (X_t)\,dt
\biggr|
\le |b-a| \tau_0 \,,
\qquad
\biggl|
\int_{\tau_{\lfloor T/\E_\emptyset(\tau)\rfloor}}^T \tilde{f} (X_t)\,dt
\biggr|
\le |b-a||T-\tau_{\lfloor T/\E_\emptyset(\tau)\rfloor}|\,.
\]
Now
\begin{align*}
T-\tau_{\lfloor T/\E_\emptyset(\tau)\rfloor}
&= - \tau_0 - \sum_{k=1}^{\lfloor T/\E_\emptyset(\tau)\rfloor} (\tau_k-\tau_{k-1}) +T
\\
&= - \tau_0 -\sum_{k=1}^{\lfloor T/\E_\emptyset(\tau)\rfloor}  (\tau_k-\tau_{k-1}-\E_\emptyset(\tau))  + T -\lfloor T/\E_\emptyset(\tau)\rfloor \E_\emptyset(\tau)\end{align*}
in which
$
0\le T-\lfloor T/\E_\emptyset(\tau)\rfloor \E_\emptyset(\tau)  < \E_\emptyset(\tau)
$
and the $\tau_k-\tau_{k-1}-\E_\emptyset(\tau)$ are i.i.d., have same law as $\tau-\E_\emptyset(\tau) $
under $\P_\emptyset$, and have expectation $0$ and variance $\Var_\emptyset (\tau)$.
Thus,
\begin{align*}
&\P_\mathfrak{m}\biggl( \biggl|\frac1T \int_0^T  f(X_t)\,dt - \pi_A f \biggr| \ge \varepsilon \biggr)
\\
&\le \P_\mathfrak{m}\left( \left| \sum_{k=1}^{\lfloor T/\E_\emptyset(\tau)\rfloor} I_k \tilde{f} \right| + |b-a|
 \left( 2\tau_0 +\left|\sum_{k=1}^{\lfloor T/\E_\emptyset(\tau)\rfloor} (\tau_k-\tau_{k-1}-\E_\emptyset(\tau))\right|
+ \E_\emptyset(\tau)\right)\ge T\eps
\right)\,.
\end{align*}
Now, using that for any $u \in [0,1)$
$$T\eps  -|b-a| \E_\emptyset(\tau) -2|b-a| \E_\mathfrak{m}(\tau_0)
= 2 \frac{(1-u)T\eps - |b-a| \E_\emptyset(\tau)}2 + uT\eps -2|b-a| \E_\mathfrak{m}(\tau_0)~,$$we obtain that
\begin{align}
\label{dec-proba}
&\P_\mathfrak{m}\biggl( \biggl|\frac1T \int_0^T  f(X_t)\,dt - \pi_A f \biggr| \ge \varepsilon \biggr)
\notag\\
&\quad\le
\P_\mathfrak{m}\left(
\left|
 \sum_{k=1}^{\lfloor T/\E_\emptyset(\tau)\rfloor} I_k \tilde{f}
 \right|
\ge \frac{(1-u)T\eps - |b-a| \E_\emptyset(\tau)}2
\right)
\notag\\
&\qquad
+
\P_\mathfrak{m}\left(
\left|
\sum_{k=1}^{\lfloor T/\E_\emptyset(\tau)\rfloor} (\tau_k-\tau_{k-1}-\E_\emptyset(\tau))
\right|
\ge \frac{(1-u)T\eps- |b-a| \E_\emptyset(\tau)}{2|b-a|}
\right)
\notag\\
&\qquad+
\P_\mathfrak{m}\left(
\tau_0-\E_\mathfrak{m}(\tau_0)
\ge \frac{uT\eps  - 2|b-a| \E_{\mathfrak{m}}(\tau_0)}{2|b-a|}
\right).
\end{align}
We aim to apply Bernstein's inequality~\cite[Cor.~2.10~p.25, (2.17), (2.18)~p.24]{massart_concentration} {to bound the three terms of the right hand side. We recall that to apply Bernstein inequality to random variables $X_1,\dots X_N$, there should exist constants $c$ and $v$ such that
$$ \sum_{k=1}^N\E_\mathfrak{m}\left[X_k^2\right]\le v,\quad \text{ and } \quad \sum_{k=1}^N\E_\mathfrak{m}\left[(X_k)_+^n\right]\le \frac{n!}{2}vc^{n-2},\quad \forall n\ge3.$$
}First,
\[
\sum_{k=1}^{\lfloor T/\E_\emptyset(\tau)\rfloor} \E_\mathfrak{m}\bigl( ( I_k \tilde{f})^2 \bigr)
=  \Bfl\frac{T}{\E_\emptyset(\tau)}\Bfr \E_\emptyset(\tau) \sigma^2(f)
\le  T \sigma^2(f)
\]
and, for $n\ge3$,
\begin{multline*}
\sum_{k=1}^{\lfloor T/\E_\emptyset(\tau)\rfloor} \E_\mathfrak{m}\bigl( (I_k \tilde{f})_\pm^n \bigr)
=  \Bfl \frac{T}{\E_\emptyset(\tau)}\Bfr  \E_\mathfrak{m}\bigl( (I \tilde{f})_\pm^n\bigr)
\\
 \le
\frac{n!}2  T \sigma^2(f)
\biggl(
\sup_{k\ge3}
\biggl(
\frac2{k!}\frac{\E_\mathfrak{m}\bigl( (I\tilde{f})_\pm^k \bigr)}{\E_\emptyset(\tau)\sigma^2(f)}
\biggr)^{\frac1{k-2}}
\biggr)^{n-2}
\triangleq
\frac{n!}2  T \sigma^2(f)(c^\pm(f))^{n-2}\,.
\end{multline*}
Then,
\[
\sum_{k=1}^{\lfloor T/\E_\emptyset(\tau)\rfloor} \E_\mathfrak{m}\bigl( (\tau_k-\tau_{k-1}-\E_\emptyset(\tau))^2 \bigr)
=  \Bfl\frac{T}{\E_\emptyset(\tau)}\Bfr \Var_\emptyset (\tau)
\le  T \frac{\Var_\emptyset (\tau)}{\E_\emptyset(\tau)}
\]
and, for $n\ge3$,
\begin{multline*}
\sum_{k=1}^{\lfloor T/\E_\emptyset(\tau)\rfloor} \E_\mathfrak{m}\bigl( (\tau_k-\tau_{k-1}-\E_\emptyset(\tau))_\pm^n \bigr)
 =\Bfl T/\E_\emptyset(\tau)\Bfr \E_\emptyset\bigl( (\tau-\E_\emptyset(\tau))_\pm^n \bigr)
\\
\le
\frac{n!}2  T \frac{\Var_\emptyset (\tau)}{\E_\emptyset(\tau)}
\biggl(
\sup_{k\ge3}
\biggl(
\frac2{k!}
\frac{\E_\emptyset\bigl( (\tau-\E_\emptyset(\tau))_\pm^k \bigr)}%
{\Var_\emptyset (\tau)}
\biggr)^{\frac1{k-2}}
\biggr)^{n-2}
\triangleq
\frac{n!}2  T \frac{\Var_\emptyset (\tau)}{\E_\emptyset(\tau)}(c^\pm(\tau))^{n-2}\,.
\end{multline*}
Lastly, $\E_\mathfrak{m}\bigl((\tau_0-\E_\mathfrak{m}(\tau_0))^2\bigr) = \Var_\mathfrak{m}(\tau_0)$ and, for $n\ge3$,
\begin{align*}
&\E_\mathfrak{m}\bigl((\tau_0-\E_\mathfrak{m}(\tau_0))_+^n\bigr)
\\
&\quad \le \frac{n!}2 \Var_\mathfrak{m}(\tau_0)
\biggl(
\sup_{k\ge3}
\biggl(
\frac2{k!}
\frac{\E_\mathfrak{m}\bigl((\tau_0-\E_\mathfrak{m}(\tau_0))_+^k\bigr)}%
{\Var_\mathfrak{m}(\tau_0)}
\biggr)^{\frac1{k-2}}
\biggr)^{n-2}
\triangleq
\frac{n!}2 \Var_\mathfrak{m}(\tau_0)(c^+(\tau_0))^{n-2}\,.
\end{align*}

Applying~\cite[Cor.~2.10 p.25, (2.17), (2.18)~p.24]{massart_concentration}
to the r.h.s. of~\eqref{dec-proba} yields that
\begin{align*}
&\P_\mathfrak{m}\biggl( \biggl|\frac1T \int_0^T  f(X_t)\,dt - \pi_A f \biggr| \ge \varepsilon \biggr)
\notag\\
&\quad\le
\exp\left(
-\frac{((1-u)T\eps - |b-a| \E_\emptyset(\tau))^2}
{8 T \sigma^2(f) + 4 c^+(f)((1-u)T\eps - |b-a| \E_\emptyset(\tau)) }
\right)
\notag\\
&\qquad+
\exp\left(
-\frac{((1-u)T\eps - |b-a| \E_\emptyset(\tau))^2}
{8 T \sigma^2(f) + 4 c^-(f)((1-u)T\eps - |b-a|\E_\emptyset(\tau)) }
\right)
\notag\\
&\qquad+
\exp\left(
-\frac{((1-u)T\eps - |b-a| \E_\emptyset(\tau))^2}
{8T |b-a|^2\frac{\Var_\emptyset (\tau)}{\E_\emptyset(\tau)}+ 4 |b-a| c^+(\tau)((1-u)T\eps - |b-a| \E_\emptyset(\tau)) }
\right)
\notag\\
&\qquad+
\exp\left(
-\frac{((1-u)T\eps - |b-a| \E_\emptyset(\tau))^2}
{8T |b-a|^2\frac{\Var_\emptyset (\tau)}{\E_\emptyset(\tau)}+ 4 |b-a| c^-(\tau)((1-u)T\eps - |b-a| \E_\emptyset(\tau)) }
\right)
\notag\\
&\qquad+
\exp\left(
-\frac{(uT\eps  - 2|b-a| \E_\mathfrak{m}(\tau_0))^2}
{8|b-a|^2\Var_\mathfrak{m} (\tau_0) + 4 |b-a| c^+(\tau_0)(uT\eps  - 2|b-a| \E_\mathfrak{m}(\tau_0)) }
\right)
\end{align*}
which is \eqref{eq:conc-ineq}.

\subsection*{Proof of Corollary \ref{cor:dev_simple}}

Under $\P_\emptyset$, $\tau_0=0$ and thus Equation \eqref{dec-proba} reads:
\begin{align}\label{etape6}
\P_\emptyset\biggl( \biggl|\frac1T \int_0^T  f(X_t)\,dt - \pi_A f \biggr| \ge \varepsilon \biggr)
&\le
\P_\emptyset\left(
\left|
 \sum_{k=1}^{\lfloor T/\E_\emptyset(\tau)\rfloor} I_k \tilde{f}
 \right|
\ge \frac{T\eps - |b-a| \E_\emptyset(\tau)}2
\right)
\\
&
+
\P_\emptyset\left(
\left|
\sum_{k=1}^{\lfloor T/\E_\emptyset(\tau)\rfloor} (\tau_k-\tau_{k-1}-\E_\emptyset(\tau))
\right|
\ge \frac{T\eps-   |b-a| \E_\emptyset(\tau)}{2|b-a|}
\right).\nonumber
\end{align}
Similarly as for the proof of Theorem \ref{thm:non-asympt-exp-bd} we apply Bernstein inequality for each of the terms in the right hand side. However, in order to simplify the obtained bound, we change the upper bounds of the moments of $I_k\tilde{f}$ and $\tau_k-\tau_{k-1}-\E_\emptyset(\tau)$. Namely we use the fact that for all $n\ge1$,
\[
\E_\emptyset(\tau^n)\le \frac{n!}{\alpha^n}\E_\emptyset(e^{\alpha\tau}) \quad \text{and} \quad \E_\emptyset(|\tau - \E_\emptyset(\tau)|^n)\le \frac{n!}{\alpha^n}\E_\emptyset(e^{\alpha\tau}) e^{\alpha \E_\emptyset(\tau)}.
\]
Since $\tau$ is a nonnegative random variable, $e^{\alpha \E_\emptyset(\tau)} \ge 1$ and in the sequel it will be more convenient to use the following upper bound: for all $n\ge1$,
\[
\E_\emptyset(\tau^n)\le \frac{n!}{\alpha^n}\E_\emptyset(e^{\alpha\tau})  e^{\alpha \E_\emptyset(\tau)}.
\]
Then
\[
\sum_{k=1}^{\lfloor T/\E_\emptyset(\tau)\rfloor} \E_\emptyset\bigl( ( I_k \tilde{f})^2 \bigr)
\le  \Bfl\frac{T}{\E_\emptyset(\tau)}\Bfr \E_\emptyset(\tau^2) (b-a)^2
\le  \frac{2(b-a)^2}{\alpha^2}\Bfl\frac{T}{\E_\emptyset(\tau)}\Bfr \E_\emptyset(e^{\alpha\tau}) e^{\alpha \E_\emptyset(\tau)}\,,
\]
and, for $n\ge3$,
\[
\sum_{k=1}^{\lfloor T/\E_\emptyset(\tau)\rfloor} \E_\emptyset\bigl( |I_k \tilde{f})|^n \bigr)
 \le \frac{n!}{2} \left(\Bfl\frac{T}{\E_\emptyset(\tau)}\Bfr  |b-a|^2 \frac{2}{\alpha^2}\E_\emptyset(e^{\alpha\tau})e^{\alpha \E_\emptyset(\tau)}\right) \ \Big(\frac{|b-a|}{\alpha}\Big)^{n-2}\,.
\]
Setting
\[
v=\frac{2(b-a)^2}{\alpha^2}\Bfl\frac{T}{\E_\emptyset(\tau)}\Bfr \E_\emptyset(e^{\alpha\tau}) e^{\alpha \E_\emptyset(\tau)},\quad \text{and}\quad c= \frac{|b-a|}{\alpha},
\]
and applying Bernstein inequality, we obtain that
\[
 \P_\emptyset\left(
\left|
 \sum_{k=1}^{\lfloor T/\E_\emptyset(\tau)\rfloor} I_k \tilde{f}
 \right|
\ge \frac{T\eps - |b-a| \E_\emptyset(\tau)}2
\right) \le 2 \exp\left(-\frac{\Bigl( T\varepsilon-|b-a|\E_\emptyset(\tau) \Bigr)^2}{4 \left(2v + (T\varepsilon-|b-a|\E_\emptyset(\tau))c \right)}  \right).
\]
Also
\[
\sum_{k=1}^{\lfloor T/\E_\emptyset(\tau)\rfloor} \E_\emptyset\bigl( (\tau_k-\tau_{k-1}-\E_\emptyset(\tau))^2 \bigr)
\le \frac{2}{\alpha^2} \Bfl \frac{T}{\E_\emptyset(\tau)}\Bfr  \E_\emptyset(e^{\alpha\tau})e^{\alpha \E_\emptyset(\tau)} \,,
\]
and, for $n\ge3$,
\[
\sum_{k=1}^{\lfloor T/\E_\emptyset(\tau)\rfloor} \E_\emptyset\bigl( |\tau_k-\tau_{k-1}-\E_\emptyset(\tau)|^n \bigr)
\le \frac{n!}{2}\left(\Bfl\frac{T}{\E_\emptyset(\tau)}\Bfr \frac{2}{\alpha^2}\E_\emptyset(e^{\alpha\tau})e^{\alpha \E_\emptyset(\tau)} \right) \frac{1}{\alpha^{n-2}}\,.
\]
Applying Bernstein inequality again, we obtain that
\begin{align*}
 &\P_\emptyset\left(
\left|
\sum_{k=1}^{\lfloor T/\E_\emptyset(\tau)\rfloor} (\tau_k-\tau_{k-1}-\E_\emptyset(\tau))
\right|
\ge \frac{T\eps-   |b-a| \E_\emptyset(\tau)}{2|b-a|}
\right)
\\
&\qquad\le
2 \exp\left(-\frac{\Bigl( T\varepsilon-|b-a|\E_\emptyset(\tau) \Bigr)^2}{4 \left(2v + (T\varepsilon-|b-a|\E_\emptyset(\tau))c \right)}  \right).
\end{align*}
Equation \eqref{etape6} gives that
\begin{align*}
\P_\emptyset\biggl( \biggl|\frac1T \int_0^T  f(X_t)\,dt - \pi_A f \biggr| \ge \varepsilon \biggr)
&\le 4 \exp\left(-\frac{\Bigl( T\varepsilon-|b-a|\E_\emptyset(\tau) \Bigr)^2}{4 \left(2v + (T\varepsilon-|b-a|\E_\emptyset(\tau))c \right)}  \right).
\end{align*}
To prove the second part of Corollary \ref{cor:dev_simple} we have to solve
\begin{equation}\label{eq:aux}
\eta=4
 \exp\left(-\frac{\Bigl( T\varepsilon-|b-a|\E_\emptyset(\tau) \Bigr)^2}{4 \left(2v + (T\varepsilon-|b-a|\E_\emptyset(\tau))c \right)}  \right)
\end{equation}by expressing $\varepsilon$ as function of $\eta$, for any $\eta\in (0,1)$.

Let us define the following decreasing bijection from $\R_+$ into $\R_-$:
\[
\varphi(x)=-\frac{x^2}{4(2v+cx)}\,.
\]
The solution of \eqref{eq:aux} is then $\eps_\eta=(|b-a|\E_\emptyset(\tau)+x_0)/T$ where $x_0$ is the unique positive solution of
\[
\varphi(x)=\log\Big(\frac{\eta}{4}\Big)\quad \Leftrightarrow \quad x^2+4c\log\big(\frac{\eta}{4}\big) x + 8v\log\big(\frac{\eta}{4}\big)=0\,.
\]
Computing the roots of this second order polynomial, we can show that there always exist one negative and one positive root as soon as $\eta<4$. More precisely,
\[
x_0=-2c\log\big(\frac{\eta}{4}\big)+\sqrt{4c^2\log^2\big(\frac{\eta}{4}\big)-8 v \log\big(\frac{\eta}{4}\big)}\,,
\]
which concludes the proof.

\appendix

\section{Appendix}
\small
\subsection{Proof of Proposition \ref{prop:couplage}}
\label{app:proof}
Before proving Proposition \ref{prop:couplage}, we start with a lemma showing that Assumption~\eqref{cond_N0} implies a milder condition which will be used repeatedly in the proof of the proposition.

\begin{lem}\label{lem:conditionN0}
Suppose that Assumption~\eqref{cond_N0} is satisfied. Then for any nonnegative random variable $U$ and $r>0$,
\begin{equation*}
\P_{\mathfrak{m}}\bigg(\int_U^{U+r} \int_{(-\infty,0]}h^+(t-s) \,N^0(ds) \,dt < +\infty,~~ U< +\infty \bigg)=\P_{\mathfrak{m}}(U<+\infty)\,.
\end{equation*}
\end{lem}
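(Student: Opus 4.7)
The plan is to exploit monotonicity: since the integrand $(t,s)\mapsto h^+(t-s)$ is nonnegative and $N^0$ is a nonnegative measure, the inner integral $t\mapsto \int_{(-\infty,0]}h^+(t-s)\,N^0(ds)$ is a nonnegative measurable function of $t$, and increasing the range of integration only increases the value. I would exploit this by dominating $[U,U+r]$ by a deterministic interval on each event $\{U\le n\}$.

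First, I would decompose $\{U<+\infty\}=\bigcup_{n\in\N}\{U\le n\}$. On the event $\{U\le n\}$ one has $[U,U+r]\subseteq[0,n+r]$, hence by nonnegativity of the integrand,
\begin{equation*}
\int_U^{U+r}\int_{(-\infty,0]}h^+(t-s)\,N^0(ds)\,dt
\;\le\;
\int_0^{n+r}\int_{(-\infty,0]}h^+(t-s)\,N^0(ds)\,dt.
\end{equation*}
Next I would apply Assumption~\eqref{cond_N0} with $t=n+r$: by Fubini--Tonelli (valid since everything is nonnegative),
\begin{equation*}
\E_{\mathfrak{m}}\biggl(\int_0^{n+r}\int_{(-\infty,0]}h^+(t-s)\,N^0(ds)\,dt\biggr)
=\int_0^{n+r}\E_{\mathfrak{m}}\biggl(\int_{(-\infty,0]}h^+(t-s)\,N^0(ds)\biggr)dt<+\infty.
\end{equation*}
Hence the dominating integral is $\P_\mathfrak{m}$-a.s.\ finite, which forces the original integral to be finite on $\{U\le n\}$, \emph{i.e.},
\begin{equation*}
\P_{\mathfrak{m}}\biggl(\int_U^{U+r}\int_{(-\infty,0]}h^+(t-s)\,N^0(ds)\,dt=+\infty,\ U\le n\biggr)=0.
\end{equation*}

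Finally I would conclude by monotone convergence (or just the countable union bound) as $n\uparrow\infty$: the events $\{U\le n\}$ are increasing with union $\{U<+\infty\}$, so the displayed probability above tends to $\P_\mathfrak{m}\bigl(\int_U^{U+r}\!\int h^+(t-s)\,N^0(ds)\,dt=+\infty,\ U<+\infty\bigr)$, which therefore equals $0$. Taking the complement inside $\{U<+\infty\}$ yields the claimed equality.

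There is no real obstacle here: the argument is a one-line domination together with the countable exhaustion trick to reduce a random upper limit of integration to a deterministic one. The only point worth flagging is that we do not need $U$ to be a stopping time or to have any measurability link with $N^0$ and $Q$, since the bound $[U,U+r]\subseteq[0,n+r]$ on $\{U\le n\}$ is purely pathwise, and the finiteness of the dominating integral is an almost-sure statement that holds on the whole probability space.
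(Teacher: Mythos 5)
Your proof is correct and follows essentially the same route as the paper: dominate the integral over the random interval by the deterministic one $\int_0^{n+r}$, deduce its a.s.\ finiteness from Assumption~\eqref{cond_N0} via Fubini--Tonelli, and conclude by countable exhaustion. The paper merely phrases the last step as an a.s.\ statement holding simultaneously for all deterministic left endpoints $u\ge0$ before substituting $U$, whereas you work directly on the events $\{U\le n\}$ with a union bound; the content is the same.
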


\begin{proof}
First note that, for every integer $n$,
\[
\int_0^n\int_{(-\infty,0]}h^+(t-s) \,N^0(ds) dt < +\infty\,,\;
 \P_{\mathfrak{m}}-\text{a.s.},
\]
using condition \eqref{cond_N0} and the Fubini-Tonelli theorem. This leads easily to
\[
\P_{\mathfrak{m}}\bigg( \forall n \ge 0,~\int_0^n\int_{(-\infty,0]}h^+(t-s) \,N^0(ds) dt < +\infty \bigg) = 1\,,
\]
and, for a positive real number $r$, to
\[
\P_{\mathfrak{m}}\bigg(\forall u\ge0,~\int_u^{u+r} \int_{(-\infty,0]}h^+(t-s) \,N^0(ds) dt < +\infty\bigg)=1\,,
\]
which gives the announced result.
\end{proof}

\begin{proof} [Proof of Proposition \ref{prop:couplage}]
Proofs of both \ref{prop:couplage-i}) and \ref{prop:couplage-ii}) will be obtained by induction on the successive atoms of $N^h$.

\paragraph{Proof of \ref{prop:couplage-i}): initialization.}
Let
\begin{align}
\label{def:Lambda0}
&\Lambda^h_0(t)=\biggl(\lambda+\int_{(-\infty,0]}h(t-s) \,N^0(ds)\biggr)^+\,,
&& t>0\,,
\\
\label{def:U1h}
&U_1^h=\inf\biggl\{u > 0 : \int_{(0,u]} \int_{(0,\Lambda_0^h(v)]} \,Q(dv,d\theta)>0 \biggr\}\,,
\end{align}
with the usual convention that $\inf \emptyset = +\infty$.
First note that conditionally on $N^0$,
\[
Q(\{(v,\theta) \in (0,\varepsilon] \times (0,+\infty) : \theta \le \Lambda_0^h(v)\})
\]
 follows a Poisson law with parameter $\int_0^{\varepsilon} \Lambda^h_0(t) dt$. Using Assumption \eqref{cond_N0} and Lemma \ref{lem:conditionN0}, we can find $\varepsilon_0 >0$ such that
 $\int_0^{\varepsilon_0} \int_{(-\infty,0]}h^+(t-s) \,N^0(ds)  dt < +\infty$.
 We thus have, $\P_{\mathfrak{m}}$-a.s.,
\begin{align*}
 \int_0^{\varepsilon_0}  \Lambda^h_0(t)  dt
 &= \int_0^{\varepsilon_0} \biggl(\lambda+\int_{(-\infty,0]}h(t-s) \,N^0(ds)\biggr)^+\,  dt
 \\
&\le \lambda \varepsilon_0 +\int_0^{\varepsilon_0} \int_{(-\infty,0]}h^+(t-s) \,N^0(ds)  dt < + \infty\,.
\end{align*}
Consequently, $Q(\{(v,\theta) \in (0,\varepsilon_0] \times (0,+\infty) : \theta \le \Lambda_0^h(v)\})$ is finite
$\P_{\mathfrak{m}}$-a.s.
Hence  $U_1^h>0$ $\P_{\mathfrak{m}}$-a.s. If $U_1^h=+\infty$ then $N^h=N^0$,
and we define $U_k^h =+\infty$ for all $k \ge 2$.
Else,  $U_1^h$ is the first atom on $(0,+\infty)$ of the point process of
 conditional intensity $\Lambda_0^h$. Since $\Lambda^h_0(t)= \Lambda^h(t)$ for $t\in (0,U_1^h]$, thus $U^h_1$ is
 also the first atom of $N^h$ on $(0,+\infty)$.

\paragraph{Proof of \ref{prop:couplage-i}): recursion.}
Assume that  we have built $U_1^h, \dots  ,U_k^h$ such that on the event $\{U_k^h < +\infty\}$
these are the first $k$ atoms of $N^h$ in increasing order. We are going to construct $U_{k+1}^h$,
which will be an atom of $N^h$ greater than $U_k^h$.

On $\{U_k^h = +\infty\}$ we set $U_{k+1}^h = +\infty$.
Henceforth, we work on $\{U_k^h < +\infty\}$. Let
\begin{align}
\label{def_int_part}
&\Lambda^h_k(t) =\biggl(\lambda+\int_{(-\infty,0]}h(t-s) \,N^0(ds) + \int_{(0,U_k^h]} h(t-s) \,N^h (ds)\biggr)^+\,,
\qquad
t>0\,,
\\
&U_{k+1}^h=\inf\biggl\{u > U_k^h : \int_{(U_k^h,u]}\int_{(0,\Lambda_k^h(v)]} \,Q(dv,d\theta)>0 \biggr\}\,.
\notag
\end{align}
As in Step 1, we first prove that there exists $\varepsilon >0$ such that $Q(\mathcal{R}_{\varepsilon})$ is a.s. finite, where
\[
\mathcal{R}_{\varepsilon}=\{(v,\theta) : v \in (U_k^h,U_k^h +\varepsilon],\, \theta \in (0,\Lambda_k^h(v)]\}\,.
\]
Since the random function $\Lambda_k^h$ is measurable with respect to $\mathcal{F}_{U_k^h}$, conditionally on $\mathcal{F}_{U_k^h}$, $Q(\mathcal{R}_{\varepsilon})$ follows a Poisson law with parameter $\int_{U_k^h}^{U_k^h +\varepsilon} \Lambda^h_k(t) dt$
(see Lemma~\ref{lem:Q})
so that
$$\P(Q(\mathcal{R}_{\varepsilon}) < +\infty) = \E\Big(\P(Q(\mathcal{R}_{\varepsilon}) < +\infty \,|\,\mathcal{F}_{U_k^h} )\Big) = \E\left(\P\left(\int_{U_k^h}^{U_k^h +\varepsilon} \Lambda^h_k(t) dt < +\infty \,\bigg|\,\mathcal{F}_{U_k^h} \right)\right).$$
Using the fact that $x \le x^+$ and the monotonicity of $x \mapsto x^+$, we  obtain
from~\eqref{def_int_part} that
\begin{align*}
\int_{U_k^h}^{U_k^h +\varepsilon} \Lambda^h_k(t) dt
\le  \lambda \varepsilon
&+ \int_{U_k^h}^{U_k^h +\varepsilon} \int_{(-\infty,0]}h^+(t-s) \,N^0(ds) dt
\\
&+ \int_{U_k^h}^{U_k^h +\varepsilon} \int_{(0,U_k^h]} h^+(t-s) \,N^h (ds) dt\,.
\end{align*}
On $\{U_k^h <+\infty\}$ the second term in the r.h.s.\ is finite thanks to Assumption~\eqref{cond_N0} and Lemma \ref{lem:conditionN0}. It is thus also finite, a.s., on $\{U_k^h <+\infty\}$, conditionally on $\mathcal{F}_{U_k^h}$.
Now, using the Fubini-Tonelli Theorem and Assumption \eqref{cond_h}, we obtain that
\begin{align*}
 \int_{U_k^h}^{U_k^h +\varepsilon} \int_{(0,U_k^h]} h^+(t-s) \,N^h (ds) dt &= \int_{(0,U_k^h]} \biggl(\int_{U_k^h}^{U_k^h +\varepsilon} h^+(t-s)dt \biggr) \,N^h (ds) \\
  & \le \|h^+\|_1 \,N^h((0,U_k^h]) = k \|h^+\|_1 < +\infty.
\end{align*}
This concludes the proof of the finiteness of $\int_{U_k^h}^{U_k^h +\varepsilon} \Lambda^h_k(t) dt $, so that $Q(\mathcal{R}_{\varepsilon}) <+\infty$, $\P_{\mathfrak{m}}$-a.s.

If  $Q(\mathcal{R}_{\varepsilon}) $ is null then $U_{k+1}^h = +\infty$ and thus $N^h = N^0 +\sum_{i=1}^k \delta_{U_i^h}$. Else, $U_{k+1}^h$ is actually a minimum, implying that $U_k^h < U_{k+1}^h$ and, since $\Lambda^h$ and $\Lambda^h_k$ coincide on $(0,U_{k+1}^h)$, that $U_{k+1}^h$ is the $(k+1)$-th atom of $N^h$.

We have eventually proved by induction the existence of a random  sequence $(U_k^h)_{k \ge 1}$ which
strictly increases up to the first rank it possibly hits $+\infty$ and stays there.
On the event that this first rank is finite,
the finite $U_k^h$ are exactly the atoms of the random point process $N^h$ on $(0, +\infty)$.

To complete the proof, it is thus enough to prove that $\lim_{k \rightarrow +\infty} U_k^h = +\infty$, $\P_{\mathfrak{m}}$-a.s. For this, we compute $\E_{\mathfrak{m}}(N^h(0,t))$ for $t >0$. For all $k \ge 1$,
\begin{align*}
\E_{\mathfrak{m}}\big(N^h (0,t  \wedge U_k^h)\big)&=\E_{\mathfrak{m}}\bigg(\int_0^{t\wedge U_k^h} \Lambda^h(u)du\bigg)\\
&=\E_{\mathfrak{m}}\bigg(\int_0^{t\wedge U_k^h} \bigg(\lambda+ \int_{(-\infty ,u)} h(u-s)\,N^h(ds)\bigg)^+\ du\bigg)\\
& \le \lambda t+\E_{\mathfrak{m}}\bigg(\int_0^t\int_{(-\infty,0]} h^+(u-s)\,N^0(ds)du \bigg)
\\
&\hphantom{\null  \le \lambda t}
+ \E_{\mathfrak{m}}\bigg(\int_0^{t\wedge U_k^h}\int_{(0,u)}h^+(u-s)\,N^h(ds) du \bigg).
\end{align*}
Using the nonnegativity of $h^+$ and Assumption \eqref{cond_N0},
\[
 \E_{\mathfrak{m}}\bigg(\int_0^t \int_{(-\infty,0]} h^+(u-s)\,N^0(ds)du \bigg)
 \le \int_0^t \E_{\mathfrak{m}}\bigg( \int_{(-\infty,0]} h^+(u-s)\,N^0(ds) \bigg) du <+\infty\,.
\]
For the last term, we use again the  Fubini-Tonelli theorem and obtain
\begin{align*}
\E_{\mathfrak{m}}\bigg(\int_0^{t\wedge U_k^h} \int_{(0,u)} h^+(u-s) \,N^h(ds)\ du\bigg)& = \E_{\mathfrak{m}}\bigg(\int_{(0,t\wedge U_k^h)} \int_s^{t\wedge U_k^h} h^+(u-s)du \,N^h(ds)\bigg)\\
& \leq \|h^+\|_1 \,\E_{\mathfrak{m}}\bigg( N^h(0,t\wedge U_k^h)\bigg).
\end{align*}
These three  inequalities and the fact that $\|h^+\|_1<1$, see Assumption \eqref{cond_h}, yield that
\begin{align}
0\leq  \E_{\mathfrak{m}}\big(N^h(0,t\wedge U_k^h)\big) \leq \frac{1}{1-\|h^+\|_1} \bigg(\lambda t + \int_0^t \E_{\mathfrak{m}}\bigg( \int_{(-\infty,0]} h^+(u-s)\,N^0(ds) \bigg) du\bigg) \label{etape5}
\end{align}
where the upper bound is finite and independent of $k$.

As a consequence, we necessarily have that $\lim_{k\rightarrow +\infty} U_k^h=+\infty$ a.s.,
which we now prove by contradiction. If $\P(\lim_{k\rightarrow +\infty} U_k^h<+\infty)>0$ then
there would exist $T>0$ and $\Omega_0$ such that $\P(\Omega_0)>0$ and $\lim_{k\rightarrow +\infty} U_k^h\leq T$ on $\Omega_0$. But this would entail that $\E_{\mathfrak{m}}(N^h(0,T\wedge U_k^h))\geq (k-1) \P_{\mathfrak{m}}(\Omega_0)$ which converges to $+\infty$ with $k$ and cannot be upper bounded by \eqref{etape5}. \\
Note additionally that once we know that $\lim_{k\rightarrow +\infty} U_k^h=+\infty$, a.s., we can use
the Beppo-Levi theorem, which leads to $\E_{\mathfrak{m}}\big(N^h(0,t)\big) < +\infty$ for all $t>0$.

Note that uniqueness comes from the algorithmic construction of the sequence $(U^h_k)_{k\geq 1}$.

\paragraph{Proof of \ref{prop:couplage-ii}).}
The assumptions of the theorem are valid both for $h$ and for $h^+$, and the result \ref{prop:couplage-i})
which we have just proved allows to construct strong solutions $N^h$ and $N^{h^+}$ of Eq.~\eqref{eq:Ng} driven by the same Poisson point process $Q$. Proving \ref{prop:couplage-ii}) is equivalent to showing that the atoms of $N^h$ are also atoms of $N^{h^+}$, which we do using the following recursion.

If $U_1^h = +\infty$ then $N^h$ as no atom on $(0,+\infty)$ and there is nothing to prove.

Else, we first show that the first atom $U^h_1$ of $N^h$ is also an atom of $N^{h^+}$. The key point is to establish that
\begin{equation} \label{eq:comp_int}
\forall t \in (0,U_1^h),~~ \Lambda^h (t)\leq \Lambda^{h^+}(t).
\end{equation}
Indeed, from the definition of $U_1^h$, there exists an atom of the Poisson measure $Q$ at some $(U_1^h,\theta)$ with $\theta \le \Lambda^h\big((U_1^h)_-\big)$. If (\ref{eq:comp_int}) is true we may deduce that $(U_1^h,\theta)$ is also an atom of $Q$ satisfying $\theta \le \Lambda^{h^+}\big((U_1^h)_-\big)$, and thus that $U_1^h$ is also an atom of $N^{h^+}$.

We now turn to the proof of (\ref{eq:comp_int}).
For every $t\in (0,U_1^h)$, we clearly have
\[
\Lambda^h (t) = \Lambda^h_0(t)
\triangleq \bigg(\lambda+\int_{(-\infty,0]}h(t-s) \,N^0(ds)\bigg)^+\,,
\]
we use the fact that $x \mapsto x^+$ is nondecreasing on $\R$ to obtain that
\[
\Lambda^h (t)  \le \lambda+\int_{(-\infty,t)}h^+(t-s) \,N^{h^+}(ds) \triangleq \Lambda^{h^+}(t)\,.
\]
We now prove that if $U_1^h, \dots , U_k^h$ are atoms of $N^{h^+}$ and $U_{k+1}^h<+\infty$ then $U_{k+1}^h$ is also an atom of $N^{h^+}$.
By construction, $\Lambda^h(t)=\Lambda^h_k(t)$ for all $t \in (0, U_{k+1}^h)$, and there exists $\theta >0$ such that $(U_{k+1}^h, \theta)$ is an atom of $Q$ satisfying $\theta \le \Lambda^h((U_{k+1}^h)_-)$. To obtain that $U_{k+1}^h$ is also an atom of $N^{h^+}$, it is thus enough to prove that
\begin{equation*}
 \forall t \in [U_k^h,U_{k+1}^h),~~ \Lambda^h (t)\leq \Lambda^{h^+}(t).
\end{equation*}
Using that $h \le h^+$ and the induction hypothesis that the first $k$ atoms $U_1^h, \dots , U_k^h$
of $N^h$ are also atoms of $N^{h^+}$, we obtain for all $t \in (U_k^h,U_{k+1}^h)$ that
\[
\int_{(0,U_k^h]} h(t-s) \,N^h (ds) \le \int_{(0,U_k^h]} h^+(t-s) \,N^h (ds) \le \int_{(0,t)} h^+(t-s) \,N^{h^+} (ds)\,.
\]
This upper bound and the definition \eqref{def_int_part} of $\Lambda_k^h$ yield that, for all $t \in (U_k^h,U_{k+1}^h)$,
\[
\Lambda^h_k(t) \le \Lambda^{h^+}(t)\,,
\]
and since $\Lambda_k^h$ and $\Lambda^h$ coincide on $(0, U_{k+1}^h)$, we have finally proved that $U_{k+1}^h$ is an atom of $N^{h^+}$. This concludes the proof of the proposition.
\end{proof}

\subsection{Extension to the more general setting of Remark~\ref{rmk-phi-coupling}}
\label{app:extension}

As noted in Remark \ref{rmk-phi-coupling}, the results of this article can be extended to a more general setting.
A critical point for this extension is to construct a coupling of the Hawkes process $N^{h,\phi}$ with a Hawkes process $N^g$ satisfying Definition \ref{def:Hawkes} for a nonnegative function $g$,
in such a way that $N^{h,\phi} \le N^g$ (thinning).
Then $N^g=\emptyset$ implies that $N^{h,\phi} =\emptyset$ and in particular this allows to derive exponential bounds on the renewal time $\tau$ of  $N^{h,\phi}$.
\begin{prop}
Assume that $N^{h,\phi}$ is a Hawkes process with conditional intensity $\Lambda^{h,\phi}$ defined in
\eqref{def:lambda_gen} and that the function $\phi$ and $h$ satisfy that there exist $\lambda$ and $a$ in $[0,\infty)$ such that $\forall x\in\R$
\[
\phi(x)\le \lambda + a x^+\,\quad \text{and}\quad\,
a\int h^+ <1.
\]
Let us denote $g=a h^+$.
Then there exists a coupling of $N^{h,\phi}$ with a Hawkes process $N^g$ in the sense of Definition \ref{def:Hawkes} such that a.s. $N^{h,\phi}\le N^{g}$.
\end{prop}
\begin{proof}[Scheme of the proof]
Similarly to the previous case, a key point is to upper-bound the intensity $\Lambda^{h,\phi}$ on given time intervals
\begin{align*}
\phi\left(\int_{(-\infty,t)} h(t-u) N^{h,\phi} (du)\right)
&\le \lambda + a \left(\int_{(-\infty,t)} h(t-u) N^{h,\phi} (du)\right)^+
&&\text{(by assumption)}
\\
&\le \lambda +  \int_{(-\infty,t)} g(t-u) N^{h,\phi} (du)
&&\text{(since $g = a h^+$)}
\\
&\le \lambda + \int_{(-\infty,t)} g(t-u) N^g (du)
&&\text{(thinning)}
\end{align*}
and thus it is possible at each point $U$ of $N^{g}$ to either include it into $N^{h,\phi}$ with probability
\[
\frac{\phi\left(\int_{(-\infty,U)} h(U-u) N^{h,\phi} (du)\right)}{\lambda +  \int_{(-\infty,U)} g(U-u) N^{g} (du)} \le1
\]
or else to reject it, independently of the rest. Then the conditional intensity of $N^{h,\phi}$ is given by
$\frac{\phi\left(\int_{(-\infty,U)} h(t-u) N^{h,\phi} (du)\right)}{\lambda +  \int_{(-\infty,t)} g(t-u) N^{g} (du)}
\left(\lambda +  \int_{(-\infty,t)} g(t-u) N^{g} (du)\right)
= \phi\left(\int_{(-\infty,t)} h(t-u) N^{h,\phi} (du)\right)$.
\end{proof}

\subsection{Return time for $M/G/\infty$ queues}

We now state a general result for the tail behavior of the time of return to zero $\mathcal{T}_1$  of a $M/G/\infty$
queue with a service time admitting exponential moments. All queues  in this section start empty.

We recall that a $M/G/\infty$ queue has a Poisson process of customer arrivals with i.i.d.~service times
with a general distribution, and each customer starts its service immediately at arrival and leaves the system
at its completion. For the Hawkes process with nonnegative reproduction function,
we  consider the ancestors
to be customers (arriving as a Poisson process of intensity $\lambda$) with service times distributed as
$\widetilde{H}_1(A) \triangleq H_1+A$
where $H_1$ is a cluster length (see Section~\ref{subsec:cluster}), and then
the queue empties exactly at the hitting times of $\emptyset$
by the auxiliary Markov process.

This result has an interest in itself, independently of the Hawkes process
interpretation.
Its proof  is based on the computation of the Laplace transform $\E(\mathrm{e}^{-s\mathcal{T}_1})$ on the
half-plane $\{s\in\C : \Re(s) > 0\}$ by Tak\'acs \cite{Takacs1956,Takacs1962}.
We extend analytically this Laplace transform to $\{s\in\C : \Re(s) > s_c\}$
for an appropriate $s_c<0$, which yields exponential moments.

\begin{thm}
\label{thm:carl}
Consider a $M/G/\infty$ queue with arrival rate $\lambda >0$ and generic service duration $H$ satisfying
for some $\gamma >0$ that, for $t\ge0$,
\[
\P(H>t) \triangleq1-G(t) = O(\mathrm{e}^{-\gamma t})\,.
\]
Let $V_1$ denote the arrival time of the first customer,
$\mathcal{T}_1$ the subsequent time of return of the queue to zero,
and $B=\mathcal{T}_1-V_1$ the corresponding busy period.
\begin{enumerate}[\rm a)]
\item
\label{exp-moment-B}
If $\beta <\gamma$ then $\E(\mathrm{e}^{\beta B}) < \infty$. In particular $\P(B \ge t) = O(\mathrm{e}^{-\beta t})$.
\item
\label{exp-moment-tau1}
If $\lambda <\gamma$, then $\P(\mathcal{T}_1 \ge t) =  O(\mathrm{e}^{-\lambda t})$.
If $\gamma \le \lambda$, for $\alpha<\gamma$ then $\P(\mathcal{T}_1 \ge t) =  O(\mathrm{e}^{-\alpha t})$.
\end{enumerate}
\end{thm}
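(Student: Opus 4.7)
Write $\mathcal{T}_1 = V_1 + B$, where $V_1 \sim \mathcal{E}(\lambda)$ is the arrival time of the first customer and is independent of the busy period $B$. Then the moment generating functions factor as $\E(e^{s\mathcal{T}_1}) = \frac{\lambda}{\lambda-s}\,\E(e^{sB})$ wherever both sides are defined. The plan is thus to first establish (a) by controlling the exponential moments of $B$, and then to deduce (b) through this factorisation combined, when needed, with a convolution-tail argument.

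\noindent\textbf{Analytic continuation of the Laplace transform of $B$.} Starting from Tak\'acs's closed-form expression for $\varphi(s) \triangleq \E(e^{-sB})$ on $\{\Re(s) > 0\}$ in \cite{Takacs1956, Takacs1962}, which takes the form of a ratio whose numerator and denominator are improper integrals of the type
\[
\int_0^\infty e^{-st}\,\psi(t)\,dt,
\]
where $\psi$ is built out of $1-G(t)$ and $\exp(-\lambda\int_0^t (1-G(u))\,du)$, I would extend $\varphi$ analytically to the strip $\{\Re(s) > -\gamma\}$. The hypothesis $1-G(t) = O(e^{-\gamma t})$ forces $\int_0^\infty(1-G) < \infty$, so the exponential factor tends to a positive constant and $\psi(t) = O(e^{-\gamma t})$. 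Hence numerator and denominator are each absolutely and locally uniformly convergent on $\{\Re(s) > -\gamma\}$, defining holomorphic functions there, and $\varphi$ extends holomorphically to that strip away from zeros of the denominator. Since $\varphi$ is the Laplace transform of a positive measure, the classical Landau--Pringsheim theorem says that the abscissa of convergence is itself a singularity of $\varphi$; combined with the explicit analytic extension, this forces the abscissa to be at most $-\gamma$, provided one rules out real zeros of the denominator in $(-\gamma, 0]$. Therefore $\E(e^{\beta B}) = \varphi(-\beta) < \infty$ for every $\beta < \gamma$, and $\P(B \ge t) = O(e^{-\beta t})$ by Chernoff, which is (a).

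\noindent\textbf{Passage to $\mathcal{T}_1$.} By independence and part (a), $\E(e^{\alpha \mathcal{T}_1}) < \infty$ for every $\alpha < \min(\lambda, \gamma)$, which yields $\P(\mathcal{T}_1 \ge t) = O(e^{-\alpha t})$ by Chernoff. This already settles the case $\gamma \le \lambda$. For the remaining case $\lambda < \gamma$, to reach the sharp rate $O(e^{-\lambda t})$ rather than $O(e^{-\alpha t})$ for $\alpha$ strictly less than $\lambda$, I would invoke the standard convolution-tail principle: if $X$ has exact exponential tail with rate $\lambda$ and $Y$ independent satisfies $\E(e^{\lambda Y}) < \infty$, then $\P(X+Y \ge t) \sim \E(e^{\lambda Y})\,\P(X \ge t)$ as $t \to \infty$. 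Applied with $X = V_1$ and $Y = B$, which satisfies $\E(e^{\lambda B}) < \infty$ by the preceding step since $\lambda < \gamma$, this gives $\P(\mathcal{T}_1 \ge t) = O(e^{-\lambda t})$.

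\noindent\textbf{Main obstacle.} The principal technical challenge lies in the analytic continuation: beyond verifying uniform absolute convergence of the Tak\'acs integrals past the imaginary axis, one must rule out real zeros of the denominator of the Tak\'acs ratio in the interval $(-\gamma, 0]$, since such a zero would create a genuine pole that shifts the abscissa of convergence and degrades the claimed rate. This sign analysis is tied to the exact form of the Tak\'acs identity used, and it is the place where the probabilistic specifics of the $M/G/\infty$ queue enter the purely analytic Laplace-transform computation.
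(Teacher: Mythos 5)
You follow the same route as the paper's appendix proof (Tak\'acs's formula, analytic continuation to $\{\Re(s)>-\gamma\}$, the fact that a Laplace transform of a positive random variable is singular at its abscissa of convergence, then independence of $V_1$ and $B$ for part b), where your convolution-tail argument is a fine substitute for the explicit computation in the paper), but the continuation step contains a genuine gap, and it is exactly where the work lies. The Tak\'acs identity is not a ratio of two integrals of the form $\int_0^\infty e^{-st}\psi(t)\,dt$ with $\psi(t)=O(e^{-\gamma t})$: as in \eqref{B-laplace}, it expresses $\E(e^{-sB})$ through the reciprocal of the single integral $\int_0^\infty e^{-st-\lambda\int_0^t[1-G(u)]\,du}\,dt$, whose integrand behaves like $e^{-\lambda\E(H)}e^{-st}$ at infinity; this integral therefore diverges for every $\Re(s)\le 0$, so the claimed locally uniform convergence on $\{\Re(s)>-\gamma\}$ fails for the formula as it stands (your sentence asserting both that the exponential factor tends to a positive constant and that $\psi(t)=O(e^{-\gamma t})$ conflates two different integrands). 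The decay $O(e^{-\gamma t})$ only appears after the integration by parts \eqref{int-parts}, which rewrites the integral as $\frac1s\bigl(1-I(s)\bigr)$ with $I(s)=\lambda\int_0^\infty[1-G(t)]\,e^{-st-\lambda\int_0^t[1-G(u)]\,du}\,dt$, simultaneously removing the apparent singularity at $s=0$ and producing an integrand that is $O(e^{-(\gamma+\Re(s))t})$. This rewriting is the step your write-up silently assumes.

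More importantly, you explicitly defer the zero-freeness of the new denominator $1-I(s)$ on $(-\gamma,0]$, but this is the crux rather than a side condition, so the proposal does not yet prove a): if $1-I$ vanished at some $\theta_0\in(-\gamma,0)$, the method would only give $\E(e^{\beta B})<\infty$ for $\beta<-\theta_0$. The paper settles this point as follows: $I(0)=1-e^{-\lambda\E(H)}<1$; since $|I(s)|\le I(\Re(s))$ and $I$ is decreasing on the reals, the continued function \eqref{prolong} is analytic on the half-plane $\{\Re(s)>\theta\}$, where $\theta$ is the first real point (capped at $-\gamma$) at which $I$ reaches $1$, so Widder's theorem gives $\sigma_c\le\theta$; finally, letting $s\downarrow\theta$ and comparing the two expressions for the limit via monotone convergence, the limit must equal $\E(e^{-\theta B})\in[1,\infty]$, which with $\theta<0$ forces $I(\theta)<1$ and hence $\theta=-\gamma$. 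Some argument of this kind, which is precisely where the probabilistic positivity of the $M/G/\infty$ model enters, has to be supplied for your proof to be complete.
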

\begin{proof}
We have $\mathcal{T}_1=V_1+B$, and
the strong Markov property of the Poisson process yields that $V_1$ and $B$ are independent.
Since $V_1$ is exponential of parameter $\lambda$,
we need mainly to study $B$.
Tak\'acs has proved in~\cite[Eq.~(37)]{Takacs1956}, see also \cite[Theorem~1~p.~210]{Takacs1962},
that the Laplace transform of $\mathcal{T}_1$ satisfies
\begin{equation}
\label{A-laplace}
\E(\mathrm{e}^{-s \mathcal{T}_1}) =
 1-
\frac1{\lambda + s} \frac1{\int_0^\infty \mathrm{e}^{-st - \lambda \int_0^t[1-G(u)]\,\mathrm{d} u}\,\mathrm{d}t}\,,
\qquad
s\in\C\,,\;\Re(s) > 0\,.
\end{equation}
 Since the Laplace transform of $V_1$ is $\frac{\lambda}{\lambda + s}$,
 the Laplace transform of $B$ satisfies
\begin{equation}
\label{B-laplace}
\E(\mathrm{e}^{-s B}) =
\frac{\lambda + s}{\lambda} -
\frac1{\lambda} \frac1{\int_0^\infty \mathrm{e}^{-st - \lambda \int_0^t[1-G(u)]\,\mathrm{d} u}\,\mathrm{d}t}\,,
\qquad
s\in\C\,,\;\Re(s) > 0\,.
\end{equation}

There is an apparent singularity in the r.h.s.\ of \eqref{A-laplace} and of \eqref{B-laplace}, since the integral term increases to infinity as $s$ decreases to $0$.  This  is normal, since these formulae remain valid for heavy tailed service.
Moreover, \eqref{A-laplace}  is proved in~\cite{Takacs1956,Takacs1962}
using
the Laplace transform of a measure with infinite mass.
We shall remove this apparent singularity and compute the abscissa of convergence of the Laplace transform
in the l.h.s.\ of~\eqref{B-laplace} .

The main point to prove is that the abscissa of convergence $\sigma_c$ of the Laplace transform in the l.h.s.\ of~\eqref{B-laplace}
satisfies $\sigma_c \le -\gamma$.
In order to remove the apparent singularity in the r.h.s.\ of \eqref{B-laplace}, we use integration by parts:
on the half-line $\{s\in\R : s> 0\}$,
\begin{align}
\label{int-parts}
\int_0^\infty \mathrm{e}^{-st - \lambda \int_0^t[1-G(u)]\,\mathrm{d} u}\,\mathrm{d}t
&=  \left[\frac{\mathrm{e}^{-st}}{-s} \mathrm{e}^{- \lambda \int_0^t[1-G(u)]\,\mathrm{d} u} \right]_{t=0}^\infty
\notag\\
&\qquad
- \int_0^\infty \frac{\mathrm{e}^{-st}}{-s} (-\lambda[1-G(t)])\, \mathrm{e}^{- \lambda \int_0^t[1-G(u)]\,\mathrm{d} u}\,\mathrm{d}t
\notag\\
&= \frac1s - \frac{\lambda}s \int_0^\infty [1-G(t)]\, \mathrm{e}^{-st- \lambda \int_0^t[1-G(u)]\,\mathrm{d} u}\,\mathrm{d}t\,.
\end{align}
After inspection of the integral on the r.h.s., since  $1-G(t) = O(\mathrm{e}^{-\gamma t})$ and
\be
\lambda  \int_0^\infty [1-G(t)]\, \mathrm{e}^{- \lambda \int_0^t[1-G(u)]\,\mathrm{d} u}\,\mathrm{d}t
= \left[-\mathrm{e}^{- \lambda \int_0^t[1-G(u)]\,\mathrm{d} u} \right]_{t=0}^\infty = 1 - \mathrm{e}^{- \lambda \E(H)} < 1,
\ee
we are able to define a constant $\theta <0$ and an analytic function $f$ by setting
\begin{align}
\theta &= \inf \left\{
s \le 0 : \lambda  \int_0^\infty [1-G(t)] \,\mathrm{e}^{-st- \lambda \int_0^t[1-G(u)]\,\mathrm{d} u}\,\mathrm{d}t <1
\right\} \vee(- \gamma) \,,
\notag\\
\label{prolong}
f(s) &=
\frac{\lambda + s}{\lambda} - \frac{s}{\lambda}
\frac1{1 - \lambda \int_0^\infty [1-G(t)]\, \mathrm{e}^{-st- \lambda \int_0^t[1-G(u)]\,\mathrm{d} u}\,\mathrm{d}t}\,,
\qquad
s\in\C\,,\;
 \Re(s) > \theta \,.
\end{align}

The Laplace transform in the l.h.s.\ of \eqref{B-laplace} has an abscissa of convergence $\sigma_c\le 0$ and
is  analytic in the half-plane $\{s\in\C : \Re(s) > \sigma_c\}$, see Widder~\cite[Theorem~5a~p.57]{Widder1941}.
Both this Laplace transform and $f$ are analytic in the domain  $\{s\in\C : \Re(s) > \max(\theta, \sigma_c)\}$,
and since these two analytic functions coincide there on the half-line $\{s\in\R : s> 0\}$ they must coincide in the whole domain,
see Rudin~\cite[Theorem~10.18~p.208]{rudin}, so that
\[
\E(\mathrm{e}^{-s B})
=f(s) \,,
\qquad
s\in\C \,,\;
\Re(s) > \max(\theta, \sigma_c)\,.
\]
This Laplace transform must  have an analytic singularity at $s=\sigma_c$,
see Widder~\cite[Theorem~5b~p.~58]{Widder1941},
and since $f$ is analytic in $\{s\in\C : \Re(s) > \theta\}$
necessarily $\sigma_c \le \theta$.

Since $\theta<0$, by monotone convergence
\[
\lim_{s\to\theta^+}f(s) =
\frac{\lambda + \theta}{\lambda} - \frac{\theta}{\lambda}
\frac1{1 - \lambda \int_0^\infty [1-G(t)]\, \mathrm{e}^{-\theta t- \lambda \int_0^t[1-G(u)]\,\mathrm{d} u}\,\mathrm{d}t}
= \E(\mathrm{e}^{- \theta B}) \in [1,\infty]\,,
\]
which implies that
 $
 \lambda \int_0^\infty [1-G(t)]\, \mathrm{e}^{-\theta t- \lambda \int_0^t[1-G(u)]\,\mathrm{d} u}\,\mathrm{d}t <1,
 $
and thus that $\theta = -\gamma$.

We conclude that  $\sigma_c \le -\gamma$.
 Thus, if $\beta <\gamma$ then $\E(\mathrm{e}^{\beta B}) < \infty$,
and   $\P(B \ge t) = O(\mathrm{e}^{-\beta t})$  using the Markov inequality.
Moreover, if $\P(B \ge t) = O(\mathrm{e}^{-\alpha t})$ then
\begin{align*}
\P(\mathcal{T}_1 \ge t) = \P(B+V_1 \ge t) &= \mathrm{e}^{-\lambda t}
+ \lambda \int_0^t \mathrm{e}^{-\lambda u}\P(B \ge t-u)\,\mathrm{d}u
\\
&\le \mathrm{e}^{-\lambda t} + C \int_0^t \mathrm{e}^{-\lambda u -\alpha (t-u)} \,\mathrm{d}u\,,
\end{align*}
hence if $\lambda <\gamma$ then choosing  $\lambda < \alpha < \gamma$ yields that
\[
\P(\mathcal{T}_1 \ge t)  \le  \mathrm{e}^{-\lambda t}
+C \mathrm{e}^{-\lambda t}\int_0^t \mathrm{e}^{ -(\alpha-\lambda) (t-u)} \,\mathrm{d}u
\le [1 + C/(\alpha-\lambda)]\mathrm{e}^{-\lambda t} ,
\]
and if  $\alpha <\gamma \le \lambda $ then
\[
\P(\mathcal{T}_1 \ge t)  \le \mathrm{e}^{-\lambda t} + C\mathrm{e}^{-\alpha t}\int_0^t \mathrm{e}^{-(\lambda -\alpha)u} \,\mathrm{d}u
\le  \Bigl[1 + \frac{C}{\lambda-\alpha}\Bigr]\mathrm{e}^{-\alpha t} \,.
\qedhere
\]
\end{proof}

We now provide a corollary to the previous result.

\begin{prop}\label{prop:appendice-suitecarl}
Consider a $M/G/\infty$ queue with arrival rate $\lambda >0$ and generic service duration $H$ satisfying
for some $\gamma >0$ that
\[
\P(H>t) = O(\mathrm{e}^{-\gamma t})\,.
\]
Let $Y_t$ denote the number of customers at time $t\ge0$,
and for each $E\ge 0$ let
\begin{equation}
\tau_E=\inf\{ t\geq E : Y_t=0\}
\end{equation}
be the first hitting time of zero after $E$.
If $\lambda<\gamma$ then let $\alpha=\lambda$, and if $\gamma\leq \lambda$ then let $0<\alpha<\gamma$.
Then there exists a constant $C<\infty$ such that
\[
\P(\tau_E \ge t)\leq \lambda C E\ e^{-\alpha(t-E)}\,,
\quad
\forall t\geq E\,.
\]
\end{prop}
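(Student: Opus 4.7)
My plan is to decompose the event $\{\tau_E\ge t\}$ into contributions from individual busy periods of the queue and then apply the single-busy-period tail bound of Theorem~\ref{thm:carl}\,(a). Enumerate the successive busy periods of the queue as $([W_k, W_k+B_k])_{k\ge 1}$, where $W_k$ is the first arrival time of the $k$-th busy period (hence one of the atoms of the Poisson arrival process of rate $\lambda$) and $B_k$ its length. For $t>E$, if $\tau_E\ge t$ then $Y_s>0$ for every $s\in[E,t)$, so a single busy period must straddle $[E,t)$. This gives the inclusion
\[
\{\tau_E\ge t\}\subseteq \bigcup_{k\ge 1}\{W_k\le E,\ B_k\ge t-W_k\}.
\]

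Next, I would invoke the regenerative structure of the $M/G/\infty$ queue at its successive returns to zero: by the strong Markov property, the cycles (idle duration, busy duration) are i.i.d., so each $B_k$ is distributed as the generic busy period $B$ and is independent of $W_k$ (which is a function of the first $k-1$ complete cycles together with the $k$-th idle period). For the $\alpha$ prescribed in the statement we have $\alpha<\gamma$ in both regimes ($\alpha=\lambda<\gamma$, or $\alpha<\gamma\le\lambda$), so Theorem~\ref{thm:carl}\,(a) furnishes a constant $C$ with $\P(B\ge s)\le C\mathrm{e}^{-\alpha s}$ for all $s\ge 0$. A union bound combined with conditioning on $W_k$ then yields
\[
\P(\tau_E\ge t)\le C\mathrm{e}^{-\alpha t}\,\E\!\left[\sum_{k\ge 1}\ind_{\{W_k\le E\}}\,\mathrm{e}^{\alpha W_k}\right].
\]

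Finally, each $W_k$ is one of the arrival epochs $V_i$ of the driving Poisson process, so the random sum above is bounded by $\sum_{V_i\le E}\mathrm{e}^{\alpha V_i}\le \mathrm{e}^{\alpha E}N_E$, where $N_E$ is the (Poisson with mean $\lambda E$) number of arrivals in $[0,E]$. Taking expectations turns this into $\lambda E\,\mathrm{e}^{\alpha E}$, and rearranging delivers the advertised bound $\P(\tau_E\ge t)\le \lambda CE\,\mathrm{e}^{-\alpha(t-E)}$.

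The only delicate point is the independence of $W_k$ and $B_k$, which relies on the classical i.i.d.\ cycle decomposition of the queue at the regeneration times $\mathcal{T}_k$; once this is in place, the remainder is a standard union bound plus a Campbell-type first-moment computation on the Poisson arrival process.
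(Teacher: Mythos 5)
Your argument is correct, but it follows a genuinely different route from the paper. The paper partitions on the index $k$ of the cycle straddling $E$ (the events $\{\mathcal{T}_{k-1}\le E,\ \mathcal{T}_k>E\}$), applies the strong Markov property at $\mathcal{T}_{k-1}$, bounds $\P(\mathcal{T}_k-\mathcal{T}_{k-1}\ge t-E)$ by the tail of the \emph{full} return time $\mathcal{T}_1$ (idle plus busy period, i.e.\ Theorem~\ref{thm:carl}~\ref{exp-moment-tau1})), and then bounds the expected number of returns to zero before $E$ by the expected number of arrivals $\lambda E$. You instead observe that on $\{\tau_E\ge t\}$ a single busy period must cover $[E,t)$, take a union bound over busy periods started by time $E$, use only the busy-period tail of Theorem~\ref{thm:carl}~\ref{exp-moment-B}) together with the independence of $B_k$ from $W_k$ (which holds by the same regenerative/strong Markov structure the paper invokes at the times $\mathcal{T}_{k-1}$, since the idle times and busy periods of successive cycles are i.i.d.\ with the idle time independent of the busy period within a cycle), and absorb the factor $\mathrm{e}^{\alpha W_k}\le \mathrm{e}^{\alpha E}$ into the Poisson count of arrivals in $[0,E]$. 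Both computations land on exactly the bound $\lambda C E\,\mathrm{e}^{-\alpha(t-E)}$; note that for the constant to be valid for all $s\ge 0$ you should take $C=\E(\mathrm{e}^{\alpha B})$ via Markov's inequality, which part~\ref{exp-moment-B}) indeed provides since $\alpha<\gamma$ in both regimes. A side benefit of your decomposition is that it never uses the arrival-rate-limited tail of $\mathcal{T}_1$: since only the busy-period tail enters, your argument would in fact give the bound for any $\alpha<\gamma$ even when $\lambda<\gamma$, i.e.\ a slightly stronger decay in $t-E$ than the statement requires, whereas the paper's proof is capped at rate $\min(\lambda,\gamma)$ because the cycle length $\mathcal{T}_k-\mathcal{T}_{k-1}$ includes an exponential idle period; the paper's version is in exchange a more direct reuse of Theorem~\ref{thm:carl}~\ref{exp-moment-tau1}).
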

\begin{proof}
The successive return times to zero $(\mathcal{T}_k)_{k\geq 0}$ of the process $(Y_t)_{t\geq 0}$  have been defined in \eqref{def:tcalk}. The events $\{\mathcal{T}_{k-1}\leq E,\ \mathcal{T}_k>E\}$ for $k\geq 1$ define a partition of $\Omega$ and, for $t>E$,
\begin{align*}
\P(\tau_E \ge t)& = \sum_{k=1}^{+\infty} \P\big(\tau_E \ge t,\,\mathcal{T}_{k-1}\leq E,\,\mathcal{T}_k>E\big)
\\
& = \sum_{k=1}^{+\infty}\P\big( \mathcal{T}_{k-1}\leq E,\,\mathcal{T}_k\ge t\big)
\\
& = \sum_{k=1}^{+\infty}\E\Big( \ind_{\{\mathcal{T}_{k-1}\leq E\}} \P\big(\mathcal{T}_k\ge t \,|\,\mathcal{F}_{\mathcal{T}_{k-1}}\big)\Big)\\
&\leq   \sum_{k=1}^{+\infty}\E\Big( \ind_{\{\mathcal{T}_{k-1}\leq E\}} \P\big(\mathcal{T}_k-\mathcal{T}_{k-1} \ge t-E \,|\,\mathcal{F}_{\mathcal{T}_{k-1}}\big)\Big)
\end{align*}
so that, since $\mathcal{T}_k-\mathcal{T}_{k-1} $ is independent of $\mathcal{F}_{\mathcal{T}_{k-1}}$ and distributed as $\mathcal{T}_1$,
\[
\P(\tau_E \ge t)
\le \sum_{k=1}^{+\infty}\E\Big( \ind_{\{\mathcal{T}_{k-1}\leq E\}} \Big) \P\big(\mathcal{T}_1 \ge t-E\big)
=  \P\big(\mathcal{T}_1 \ge t-E\big) \E\Bigg(\sum_{k=1}^{+\infty} \ind_{\{\mathcal{T}_{k-1}\leq E\}} \Bigg)\,.
\]
 By Theorem~\ref{thm:carl}, under the assumptions there exists a constant $C$ such that
\[
\P\big(\mathcal{T}_1 \ge t-E\big)\leq C \mathrm{e}^{\alpha(t-E)}\,.
\]
Moreover
$
\sum_{k=1}^{+\infty} \ind_{\{\mathcal{T}_{k-1}\leq E\}}
$
is  the number of returns to zero before time $E$. It is
 bounded by the number of arrivals between times~$0$ and $E$, which follows a Poisson law of parameter
 and expectation $\lambda E$. This leads to the announced inequality.
\end{proof}

\subsection{Strong Markov property for homogeneous Poisson point process}

In this appendix, we prove a strong Markov property for homogeneous Poisson point processes on the line.
This classic result is stated in \cite[Proposition~1.18~p.18]{robert_philippe} when the filtration is the
canonical filtration generated by the Poisson point process. Here, the filtration $(\mathcal{F}_t)_{t\ge 0}$
may contain additional information, for example coming from configurations on $\R_-$.

\begin{lem}
\label{lem:Q}
Let $Q$ be a $(\mathcal{F}_t)_{t\ge0}$-Poisson point process on $(0,+\infty)\times (0,+\infty)$ with
unit intensity.
Then $Q$ is a strong $(\mathcal{F}_t)_{t\ge0}$-Markov process in the following sense: for any
stopping time $T$ for $(\mathcal{F}_t)_{t\ge0}$,
conditionally on $T<\infty$  the shifted process $S_TQ$ defined by \eqref{shift-Q}
is a $(\mathcal{F}_{T+t})_{t\ge0}$-Poisson point process with unit intensity.
\end{lem}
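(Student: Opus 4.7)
The plan is to verify the characterizing property recalled just after~\eqref{eq:Ng}: for every $r, h, a > 0$, on $\{T < \infty\}$ the random variable $Z \triangleq Q((T+r, T+r+h] \times (0, a])$ should be $\mathcal{F}_{T+r+h}$-measurable, independent of $\mathcal{F}_{T+r}$, and Poisson distributed of parameter $ha$. The tool is the classical dyadic approximation of $T$ from above, combined with the essentially trivial Markov property at stopping times with countable range.

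First I would introduce $T_n = 2^{-n}\lceil 2^n T\rceil$, which satisfy $T_n \downarrow T$, take values in $\{k 2^{-n} : k \geq 1\} \cup \{+\infty\}$, and $\mathcal{F}_T \subset \mathcal{F}_{T_n}$. Fix $n$, $\xi \in \R$ and any $B \in \mathcal{F}_{T_n + r}$ with $B \subset \{T_n < \infty\}$. Decomposing $B = \bigsqcup_{k \geq 1} B_k$ with $B_k = B \cap \{T_n = k 2^{-n}\} \in \mathcal{F}_{k 2^{-n}+r}$ and applying the defining Poisson property of $Q$ at the deterministic time $k 2^{-n}+r$ would yield
\[
\E\bigl[\ind_{B_k}\, \mathrm{e}^{i\xi Z_n}\bigr] = \P(B_k)\, \exp\!\bigl(ha(\mathrm{e}^{i\xi} - 1)\bigr),
\qquad Z_n \triangleq Q((T_n + r, T_n + r + h] \times (0,a]),
\]
and summing over $k$ would give the analogous identity with $B$ in place of $B_k$. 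This proves that, on $\{T_n < \infty\}$, $Z_n$ is Poisson$(ha)$ and independent of $\mathcal{F}_{T_n + r}$, hence a fortiori of the smaller $\mathcal{F}_{T + r}$.

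I would then pass to the limit $n \to \infty$. Writing $Z_n = Q((0, T_n + r + h] \times (0, a]) - Q((0, T_n + r] \times (0, a])$ and using that $s \mapsto Q((0, s] \times (0, a])$ is a locally finite right-continuous counting process while $T_n + r \downarrow T + r$ and $T_n + r + h \downarrow T + r + h$ yields $Z_n \to Z$ a.s.\ on $\{T < \infty\}$. Bounded convergence in the characteristic-function identity above then transfers the Poisson$(ha)$ law and the independence from $\mathcal{F}_{T+r}$ to $Z$. The $\mathcal{F}_{T + r + h}$-measurability of $Z$ follows from the same decomposition combined with the standard fact that $Q((0, \sigma] \times (0, a])$ is $\mathcal{F}_\sigma$-measurable for every stopping time $\sigma$, itself established by the same dyadic approximation starting from the obvious countable-range case.

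The main subtle point is the limit step: although $T_n \downarrow T$, the stochastic intervals $(T_n + r, T_n + r + h]$ are not monotone in $n$, so the convergence $Z_n \to Z$ is not a monotone-class statement but must be obtained by splitting $Z_n$ into the difference of two values of the counting process $s \mapsto Q((0, s] \times (0, a])$ and invoking right-continuity in each argument separately. Everything else—independence surviving to a coarser $\sigma$-field via $\mathcal{F}_{T+r} \subset \bigcap_n \mathcal{F}_{T_n+r}$, and measurability of evaluations of $Q$ on stochastic rectangles—is then routine bookkeeping.
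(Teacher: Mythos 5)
Your proof is correct and follows essentially the same route as the paper: reduce to a single increment $Q((T+t,T+t+h]\times(0,a])$, handle stopping times with countable range by decomposing over their values and invoking the Poisson property at deterministic times, then approximate a general $T$ by dyadic discretizations from above and conclude via right-continuity of $s\mapsto Q((0,s]\times(0,a])$ and of the filtration. The only differences are cosmetic (you use characteristic-function identities where the paper computes $\P(B\cap\{Z=k\})$ directly, and you spell out the limiting step a bit more explicitly).
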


\begin{proof}
It is enough to prove that, for any stopping time $T$ and $h,a>0$,
conditionally on $T<\infty$
the random variable
$Q((T,T+h]\times (0,a])$ is $\mathcal{F}_{T+h}$-measurable, independent of $\mathcal{F}_T$, and
Poisson of parameter $h a$. Indeed, in order to prove the strong Markov property at a given
stopping time $T$, it is enough to apply the above to the stopping times $T+t$ for $t>0$
in order to see that $S_TQ$ satisfies {that for every $t,h,a>0$, the random variable
$Q((t,t+h]\times (0,a])$ is $\mathcal{F}_{t+h}$-measurable, independent of $\mathcal{F}_t$, and
Poisson of parameter $h a$.}

We first prove this for an arbitrary stopping time $T$ with finite values
belonging to an increasing deterministic sequence $(t_n)_{n\ge1}$.
For each $B$ in $\mathcal{F}_T$ and $k\ge0$,
\begin{align*}
&\P(B\cap\{T<\infty\}\cap \{Q((T,T+h]\times (0,a])=k\})
\\
&\quad =\sum_{n\ge1} \P(B\cap\{T=t_n\}\cap\{Q((t_n,t_n+h]\times (0,a])=k\})
\end{align*}
in which,  by definition of $\mathcal{F}_T$ and since $\mathcal{F}_{t_{n-1}}\subset \mathcal{F}_{t_n}$,
\[
B\cap\{T=t_n\} = (B\cap \{T\le t_n\}) - (B\cap \{T\le t_{n-1}\}) \in \mathcal{F}_{t_n}\,.
\]
The $(\mathcal{F}_t)_{t\ge0}$-Poisson point process property then yields that
\[
\P(B\cap\{T=t_n\}\cap\{Q((t_n,t_n+h]\times (0,a])=k\}) = \P(B\cap\{T=t_n\})\,\mathrm{e}^{-ha}\frac{(ha)^k}{k!}
\]
and summation of the series that
\[
\P(B\cap\{T<\infty\}\cap \{Q((T,T+h]\times (0,a])=k\}) =\P(B\cap\{T<\infty\})\,\mathrm{e}^{-ha}\frac{(ha)^k}{k!}\,.
\]
Hence $Q((T,T+h]\times (0,a])$ is independent of $\mathcal{F}_T$ and
Poisson of parameter $h a$.
Moreover, for $k\ge0$, similarly
\begin{align*}
 &\{T<\infty,\, Q((T,T+h]\times (0,a])=k\} \cap \{T+h\le t\}
 \\
 &\quad= \bigcup_{n\ge1} \{T=t_n,\, Q((t_n,t_n+h]\times (0,a])=k\} \cap \{t_n+h\le t\} \subset\mathcal{F}_t
\end{align*}
and hence $Q((T,T+h]\times (0,a])$ is $\mathcal{F}_{T+h}$-measurable.

In order to extend this to a general stopping time $T$, we approximate it by above by the discrete stopping times
\[
T_n = \sum_{k=1}^{+\infty} \frac{k}{2^n} \ind_{\{\frac{k-1}{2^n} < T \le \frac{k}{2^n}\}}\,,
\qquad
n\ge1\,.
\]
Letting $n$ go to infinity, the right continuity of $t\mapsto Q((0,t]\times(0,a])$ and of $(\mathcal{F}_t)_{t\ge0}$
allows to conclude.
\end{proof}

{\footnotesize
\providecommand{\noopsort}[1]{}\providecommand{\noopsort}[1]{}\providecommand{\noopsort}[1]{}\providecommand{\noopsort}[1]{}

}
\end{document}